\newcolumntype{e}{>{\displaystyle}r @{\,} >{\displaystyle}c @{\,} >{\displaystyle}l}
\theoremstyle{plain}
\newtheorem{Theorem}{Theorem}[section]
\newtheorem{Corollary}[Theorem]{Corollary}
\newtheorem{Lemma}[Theorem]{Lemma}
\newtheorem{Proposition}[Theorem]{Proposition}
\theoremstyle{definition}
\newtheorem{Definition}[Theorem]{Definition}
\newtheorem{Remark}[Theorem]{Remark}
\newcounter{Condition}
\renewcommand{\P}{\mathbb{P}}
\newcommand{\E}{\mathbb{E}}
\newcommand{\R}{\mathbb{R}}
\newcommand{\N}{\mathbb{N}}
\newcommand{\Z}{\mathbb{Z}}
\newcommand{\e}{\varepsilon}
\newcommand{\1}{{\text{\Large $\mathfrak 1$}}}
\def\bs{\backslash}
\def\bP{\mathbb{P}}
\def\bE{\mathbb{E}}
\def\cE{\mathcal{E}}
\def\cS{\mathscr{S}}
\def\sS{\mathscr{S}}
\def\cT{\mathcal{T}}
\def\reff#1{(\ref{#1})}
\numberwithin{equation}{section}
  \newcounter{constant}
\def\arraypar#1{\parbox[c]{\textwidth - 2cm}{\centering #1}}
\begin{document}

\title{Internal Diffusion Limited Aggregation \\ with Critical Branching Random Walks}

\author{
Amine Asselah\footnote{amine.asselah@u-pec.fr;  Universit{\'e} Paris-Est, LAMA, UMR805D CNRS, Paris, France} \and
Vittoria Silvestri\footnote{silvestri@mat.uniroma1.it; University of Rome La Sapienza, Dept.\ of Mathematics, Rome, Italy} \and 
Lorenzo Taggi\footnote{lorenzo.taggi@uniroma1.it; University of Rome La Sapienza, Dept.\ of Mathematics, Rome, Italy}
}

\maketitle

\begin{abstract} 
Internal Diffusion Limited Aggregation is an interacting particle system that describes the growth of a random cluster governed by the boundary harmonic measure seen from an internal point.  Our paper  studies  IDLA in $\mathbb{Z}^d$ driven by critical branching random walks.
We prove that,  unlike classical IDLA, this process exhibits a phase transition in the dimension.  More precisely,  we establish the existence of a spherical shape theorem in dimension $d \geq 3$ and the absence of a spherical shape theorem for $d \leq 2$. Our bounds on the inner and outer worst deviations are of polynomial nature, which we expect to be a feature of this model. 
\end{abstract}

\setcounter{tocdepth}{2}
\tableofcontents

\section{Introduction}\label{sec:introduction}
Internal Diffusion Limited Aggregation (IDLA) is a mathematical model aimed at describing erosion processes driven by diffusing particles. In its classical version on the cubic lattice $\mathbb Z^d$, the process starts from an aggregate made of a single site, the origin $\{ 0 \}$. We say that the origin is \emph{occupied}, and the other sites are \emph{empty}. At each iterative step, a new diffusing particle is started from the origin. The particle moves as a simple random walk on $\mathbb{Z}^d$ until it finds an empty site, where it stops forever. We say that the particle \emph{settles} at that site, and the site is added to the current aggregate. 
Equivalently, the aggregate grows by swallowing boundary sites, one at the time, according to the \emph{harmonic measure on the boundary seen from the origin}, namely the exit law from the aggregate of a simple random walk starting at $\{0\}$.

This model was introduced by Meakin and Deutch as an erosion process  in 1986  \cite{meakin1986formation}, and independently by Diaconis and Fulton as an Abelian process  in 1991 \cite{diaconis1991growth}. We will come back later to the meaning of Abelianity in this context. 
Right from its introduction, the first question to be addressed on IDLA was that of a shape theorem, namely whether rescaled IDLA aggregates approach a deterministic shape. This  was established by Lawler, Bramson and Griffeath  in 1992 \cite{lawler1992internal}: the shape is asymptotically an Euclidean ball on $\mathbb{Z}^d$ for any $d\geq 1$. Nearly two decades later, IDLA maximal deviations from circularity were shown to be logarithmic in the radius of the limiting ball in any dimension $d \geq 2$ by Asselah and Gaudilli{\`e}re and, independently, Jerison, Levine and Sheffield \cite{asselah2013logarithmic,asselah2013sublogarithmic,jerison2012logarithmic,jerison2014internal,jerison2013internal}.

In this paper we introduce a new variant of IDLA, in which the dynamics is driven by \emph{Critical Branching Random Walks} in place of simple random walks. 
Recall that, given a probability distribution $\nu $ on $\mathbb N = \{0,1,2 \dots \}$ with strictly positive variance,  a  Branching Random Walk (BRW) on $\mathbb{Z}^d$ starting from a vertex $z \in \mathbb{Z}^d$ is defined as follows. At time zero a single particle is placed at $z$. The particle has an exponential clock: when the clock rings, the particle splits into 
 a random number of indistinguishable particles at the same site (its \emph{children}) with law $\nu$, which each  take an independent and instantaneous simple random walk step on $\mathbb Z^d$. After that, each particle continues evolving as its parent, independently of the others. We refer the reader to Section \ref{sec:BRW} for a formal definition.  If $\nu$ has mean $1$, the BRW is said to be \emph{critical}, and it is known that the associated branching process dies out in finite time almost surely. 

Here we propose to use critical BRWs to drive IDLA dynamics. The (discrete time)  BIDLA process $(A(t))_{t\in \mathbb{N}} $ is defined as follows. Set $A(0) =\emptyset $ and $A(1) = \{0\}$. For all $t\geq 1$, the aggregate (or \emph{cluster}) $A(t)$ is a finite connected subset of $\mathbb Z^d$. We say that sites in $A(t)$ are \emph{occupied}, while sites outside $A(t)$ are \emph{empty}.  For each $t\geq 1$ define $A(t+1)$ from $A(t)$ inductively, by 
releasing a critical BRW from the origin, with the rule that when a particle reaches an empty site the particle stops there forever (in which case we say it \emph{settled}). The site is then declared \emph{occupied}, and the following particles stepping on it will continue their evolution until reaching a new empty site. Since the BRW is critical, after finitely many steps all particles will have settled. We call the resulting configuration the \emph{BIDLA stabilization} of $A(t) + \1_0$, and denote it by $\mathscr{S}(A(t) + \1_0 )$. Then define 
	\[ A(t+1) := \mathscr{S} ( A(t) + \1_0 ) . \]
In words, at each discrete time step $t\geq 1$ a critical BRW is released from the origin in $A(t)$ with settling upon visiting empty sites, and the settling locations are added to the cluster to make $A(t+1)$. 
See Figure \ref{fig:BIDLA} below for simulations of BIDLA clusters on $\mathbb Z^2$, and Section \ref{sec:BIDLA} for a formal definition of the model. 
\begin{figure}[!!ht]
    \begin{center}
     \centering
  \mbox{\hbox{
  \includegraphics[width=.55\textwidth]{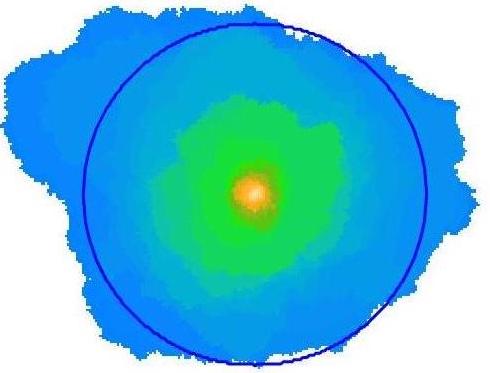}
   \;  \includegraphics[width=.43\textwidth]{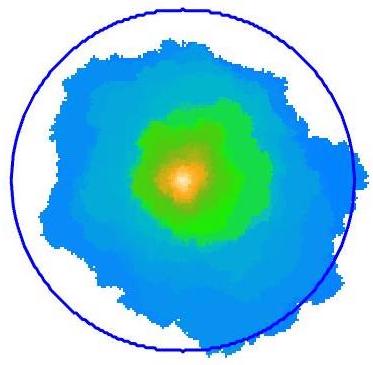}}}
  \\ \vspace{2mm}
  \mbox{\hbox{
  \quad \quad \includegraphics[width=.44\textwidth]{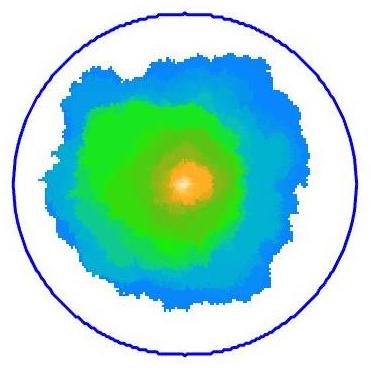}
    \quad \quad \includegraphics[width=.44\textwidth]{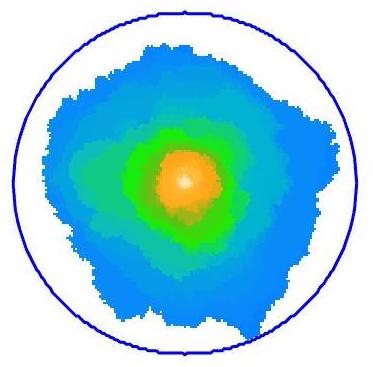}}}
       \end{center}
 \caption{Four simulations of BIDLA clusters on $\mathbb{Z}^2$ at time $t=20000$, plotted together with a disc of radius $\sqrt{t/\pi}$. Here the different colours represent the arrival time of particles (where time is indexed by particle releases).} \label{fig:BIDLA}
 \end{figure}

The phenomenology of Branching Random Walks with settling, and hence of BIDLA,  is drastically different from that of  simple random walks. 
To start with, notice that while in classical IDLA launching one particle from the origin results in the addition of exactly one particle to the aggregate, in BIDLA new particles are created and some are lost due to the nature of the driving Branching Random Walks. Thus, even though 
	\[ \bE ( |A(t) \setminus A(t-1) | ) =1, \]
(see Lemma  \ref{lem-growth} below), in a single time step the cardinality of a cluster may, a priori, grow by an arbitrarily large  amount.  Furthermore, the exit locations of the arbitrarily many particles which are added to the cluster in one time step are correlated, and may be clumped together, with particles stepping on each other to reach sites which are far away from the cluster. These two new and crucial features of BIDLA play a fundamental role in the spacial growth of clusters, and cause classical approaches to the proof of shape theorems to fail in this setting. 
To tackle this issue we introduce a new auxiliary process of independent interest  called \emph{Random Barrier Growth}, which can be coupled with BIDLA and whose growth is easier to bound. 


\subsection{Statement of results}
We address the question of the existence of a deterministic  limiting shape for BIDLA aggregates. 
Our results hold under the following assumption on the offspring distribution $\nu$ on $\mathbb{N}$, which is in force throughout the paper: 
	\begin{equation}\label{assumption}
	\tag{H} 
	\sum_{k=0}^\infty k \nu (k) = 1 , 
	\qquad 
	\sigma^2 := \sum_{k=0}^\infty (k -1 )^2 \nu (k) >0 , 
	\qquad 
	\exists \lambda >0 \; : \; \,\sum_{k=0}^\infty e^{\lambda k } \nu (k) < \infty . 
	\end{equation}
For $r>0$  let $B_r := \{ z \in \mathbb Z^d : \| z\| < r \}$
denote the Euclidean ball of radius $r$ around the origin. 
Then for any $\varepsilon >0$ denote by 
$ \mathcal{S}_\varepsilon $  the set of $\e$-symmetric clusters, defined as 
\begin{equation}\label{def:symmetric}
	 \mathcal{S}_\varepsilon := \big\{ A \subseteq \mathbb Z^d : \ \exists r>0 \mbox{ s.t. } 
	B_{(1-\e ) r} \subseteq A \subseteq B_{(1+\e ) r} \big\} . 
\end{equation}
Our main result is the following. 
\begin{Theorem} \label{th:main}
Let $(A(t))_{t\in \N }$ denote a BIDLA process on $\mathbb Z^d$ satisfying Assumption \eqref{assumption}. 
\begin{itemize}
\item If $d=1,2$ then there exists $\e >0$ such that 
	\[ \bP \big( A(t) \not\in \mathcal{S}_\e \mbox{ infinitely often} \big) =1 . \]
\item If $d\geq 3$ then $\forall \e >0$ 
	\[ \bP \big( A(t) \in \mathcal{S}_\e \mbox{ eventually} \big) =1. \]
\end{itemize}
\end{Theorem}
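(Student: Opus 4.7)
The theorem asserts a sharp dimensional dichotomy, and the two directions require genuinely different arguments, reflecting the contrasting behaviour of critical BRW in low versus high dimensions. In both cases the typical radius at time $t$ is $r_t \asymp t^{1/d}$, since $\E |A(t)| = t$ by Lemma~\ref{lem-growth}.

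For the positive direction $d \geq 3$, my plan is to control inner and outer deviations separately, making crucial use of the Random Barrier Growth auxiliary process advertised in the introduction. First I would establish that a single BRW released at the origin within $A(t)$, with sticky absorption on empty sites, has only polynomially small probability of depositing a particle at distance $\geq (1+\e ) r_t$. The key technical input is a polynomial tail bound on the maximum displacement of an unconstrained critical BRW in $\mathbb{Z}^d$, which can be transferred to the cluster dynamics via the RBG coupling (the whole point of RBG being that it dominates BIDLA growth by a process whose extremes are tractable). Summing these probabilities over $t$ and applying Borel--Cantelli would exclude tendrils beyond $(1+\e ) r_t$ eventually. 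For the inner deviation, I would mimic the classical Lawler--Bramson--Griffeath approach: a site $x \in B_{(1-\e ) r_t}$ can remain empty only if every BRW released so far has avoided settling there. Using discrete Green's function estimates, which decay polynomially precisely because $d\geq 3$ is transient, together with the fact that approximately $t$ BRWs have been released, one obtains a summable bound on $\P (x \notin A(t))$. A union bound over $x\in B_{(1-\e )r_t}$ and Borel--Cantelli close the argument.

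For the negative direction $d \leq 2$, the plan is a reverse Borel--Cantelli argument. At each dyadic scale $t_k := 2^k$ I would define a \emph{bad} event $\mathcal{E}_k$ on which a single BRW release during $[t_{k-1},t_k]$ produces an anomalously large and spatially localised cluster extension protruding by at least $\e \cdot t_k^{1/d}$ beyond $\partial B_{r_{t_k}}$. The probability of $\mathcal{E}_k$ would be bounded below uniformly in $k$ using the heavy-tailed total progeny $\P (N \geq n ) \asymp n^{-1/2}$ for critical branching processes, together with the fact that in $d \leq 2$ a critical BRW conditioned to have large progeny has diffusive spatial spread on the full scale $\sqrt{n}$. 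Isolating a single release in each dyadic window makes the events $\mathcal{E}_k$ essentially independent across scales, so the second Borel--Cantelli lemma yields $A(t) \not\in \mathcal{S}_\e$ infinitely often for $\e$ sufficiently small.

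The main obstacle in both directions is the strong spatial correlation between settling locations of particles from a single BRW release: unlike in classical IDLA, one step can add arbitrarily many sites, clumped together in space. For the positive direction this correlation is precisely what makes the outer deviation delicate, and showing that the RBG domination is tight enough to yield polynomial (rather than, say, power-of-$t$) decay at the correct exponent is the crux. For the negative direction, the hardest point will be proving \emph{persistence} of the asymmetry created at time $t_k$: later BRW releases could in principle fill in around the bump and restore symmetry, so the bump must be localised far enough outside the natural growth region at time $t_k$ that subsequent smoothing is provably too slow on the relevant time scale.
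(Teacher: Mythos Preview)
Your $d\geq 3$ sketch is broadly aligned with the paper (inner via the Lawler--Bramson--Griffeath ghost particle argument with second moment control, outer via the RBG comparison and Borel--Cantelli), though the outer bound is not obtained by controlling ``a single BRW release'' in isolation. The paper first proves the inner bound, then separately bounds the total number of pioneers frozen on $\partial B_n$ (Proposition~\ref{prop-standby}; this is nontrivial because the particle count is not conserved), and only then feeds this \emph{batch} of $O(n^{d-1+\alpha})$ frozen particles into the RBG machinery (Theorem~\ref{thm:controlling}). Bounding the displacement of an individual release does not suffice, since particles from different releases can step on each other's settled sites to reach further than any one of them could alone; the intermediate count of frozen particles is what makes the RBG contraction applicable.

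Your $d\leq 2$ plan has a genuine gap, and the paper's route is both simpler and different in structure. Your dyadic events $\mathcal{E}_k$ are \emph{not} ``essentially independent across scales'': whether a BRW released in $[t_{k-1},t_k]$ produces a protrusion depends on the shape of $A(t_{k-1})$, which carries the entire history, so second Borel--Cantelli does not apply without substantial further work. You correctly identify persistence as the hardest point, but you offer no mechanism for it. The paper avoids both issues entirely by a Markov chain argument: passing to the jump process $\tilde A$, it shows (Proposition~\ref{prop:breakdim2}) that \emph{conditionally on being in $\mathcal{S}_\varepsilon$}, the process leaves $\mathcal{S}_\varepsilon$ in the very next step with probability bounded below by some $c(\varepsilon)>0$, uniformly in the current state. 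The conclusion then follows immediately: either the process visits $\mathcal{S}_\varepsilon$ only finitely often (done), or it visits infinitely often, in which case the number of exits stochastically dominates a sum of i.i.d.\ Bernoulli$(c)$ variables and is a.s.\ infinite. No independence across scales and no persistence are needed. The work goes instead into showing that a single BRW, conditioned to reach $\partial A$, with uniformly positive probability delivers $\Theta(n^2)$ pioneers clumped near one boundary point, and that BIDLA-stabilising this clump both overshoots $B_{(1+2\varepsilon)n}$ and adds too little volume to restore $\varepsilon$-symmetry.
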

This shows that BIDLA exhibits a {\it phase transition}, in that the existence of a spherical  shape theorem depends on the dimension. 
If $d\geq 3$ the deterministic limiting shape is the same as classical IDLA on $\mathbb Z^d$. If $d=1$ the barycenter of the aggregate (which on $\mathbb Z $ is simply an interval) moves macroscopically infinitely often. Finally, if $d=2$ our results would prove that there is no shape theorem provided one could rule out the existence of asymptotic shapes other than Euclidean balls.

In dimension three and more we are able to prove stronger estimates for the maximal deviations from circularity. To state them, for any $t\geq 1$ let $r(t)$ denote the radius of a Euclidean ball of volume $t$ centered at the origin, so that by the shape theorem $A(t) $ is close to $B_{r(t)}$ eventually in $t$, almost surely. 
We define 
	\[ \begin{split} 
	\delta^I (t) &  := \inf \{ \delta \geq 0 : B_{r(t)- \delta } \subseteq A(t) \}
	\\
	\delta^O (t) &  := \inf \{ \delta \geq 0 : A(t) \subseteq B_{r(t) +\delta } \} . 
	\end{split}
	\]
Then  $\delta^I(t)$ and $\delta^O(t)$ measure respectively the inner and outer maximal deviation from circularity of $A(t)$.

\begin{Theorem} \label{th:deviation}
Let $(A(t))_{t\geq 0}$ denote a BIDLA process in  dimension $d>2$ satisfying Assumption \eqref{assumption}. Then for any $\epsilon>0$,
\begin{equation}\label{main-error}
\bP \Big( \Big\{ \delta^I(t)> t^{\frac{1}{2}+\epsilon} \Big\} 
\cup \Big\{\delta^O(t)> t^{1-\frac{1}{2d}+\epsilon} \Big\} 
\, \, \text{infinitely often in }t \Big)
=0.
\end{equation}
\end{Theorem}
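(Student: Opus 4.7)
I split \eqref{main-error} into its two parts. The inner bound is in fact deterministic for $d \geq 3$ and $t$ large: by definition $\delta^I(t) \leq r(t)$ (take $\delta = r(t)$, so that $B_0 = \emptyset \subseteq A(t)$), and $r(t) = (t/\omega_d)^{1/d} < t^{\frac{1}{2}+\epsilon}$ once $d \geq 3$ and $t$ is large enough. So the substantive content of the theorem is the outer bound $\delta^O(t) \leq t^{1-\frac{1}{2d}+\epsilon}$.

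\textbf{Reduction to BRW ranges.} The driving observation is that a site $z$ can belong to $A(t)$ only if some particle of one of the first $t$ released BRWs walked from the origin to $z$. Since the BIDLA dynamics only truncates trajectories via settling, one can couple each BIDLA BRW with an independent unstopped critical BRW having the same underlying dynamics, to obtain
\[
\max_{z \in A(t)} \|z\| \;\leq\; \max_{1 \leq i \leq t} R(\tilde W^{(i)}),
\]
where $R(\tilde W^{(i)})$ denotes the range (maximal distance from origin of any particle) of the $i$-th i.i.d.\ unstopped BRW. The problem then reduces to controlling the maximum of $t$ i.i.d.\ critical BRW ranges. The key ingredient is the polynomial tail
\[
\bP\big(R(\tilde W) > R\big) \;\leq\; C R^{-2},
\]
valid under \eqref{assumption}. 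Heuristically, a critical BRW with finite offspring variance reaches distance $R$ only if its branching tree survives $\Omega(R^2)$ generations (diffusive scaling), and Kolmogorov's estimate gives survival probability of order $1/R^2$. The exponential moment assumption in (H) is needed to suppress atypical spatial excursions in which a short-lived tree has a particle moving super-diffusively fast.

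\textbf{Conclusion and main obstacle.} A union bound over the first $t$ BRWs gives $\bP(\delta^O(t) > R) \leq C t / R^2$. Setting $R(t) = t^{1-\frac{1}{2d}+\epsilon}$, this becomes $C t^{-1 + 1/d - 2\epsilon}$, which is summable along the dyadic subsequence $t_n = 2^n$ for any $\epsilon > 0$. Borel--Cantelli yields the outer bound almost surely along $t_n$, and a standard interpolation, using $A(s) \subseteq A(t_{n+1})$ for $s \in [t_n, t_{n+1}]$ together with the polynomial form of $R$, extends the bound to all $t$ modulo absorbing a constant factor into $\epsilon$. The principal technical obstacle is establishing the sharp $R^{-2}$ tail on the BRW range: the survival-time side is classical, but the spatial large-deviation control (to rule out rare fast excursions of a surviving lineage) requires careful use of the exponential moment assumption. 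As an alternative route, one could invoke the Random Barrier Growth auxiliary process introduced earlier in the paper as a dominating process particularly amenable to this type of tail estimate.
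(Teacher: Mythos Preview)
Your argument is correct for the theorem as literally stated, and it takes a genuinely different---and much lighter---route than the paper.

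Your observation on the inner deviation is right: trivially $\delta^I(t)\le r(t)\asymp t^{1/d}$, and $1/d<1/2$ for $d\ge 3$, so the inner half of \eqref{main-error} is vacuous as written. For the outer deviation, the domination $A(t)\subseteq\bigcup_{i\le t}\mathrm{Range}(\tilde W^{(i)})$ is valid (settling only prunes branches of the driving tree), the tail $\bP(\mathrm{Range}>R)\le C/R^2$ is exactly the upper bound in Proposition~\ref{le:distance}, and your dyadic Borel--Cantelli with monotone interpolation goes through. In fact your method yields $\delta^O(t)\le t^{1/2+\epsilon}$ eventually, which is strictly stronger than the stated $t^{1-1/(2d)+\epsilon}$.

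The paper, however, actually proves something much sharper than what the theorem records. In Section~\ref{sec:finalsection} the estimates are written in the scale $n=r(t)\asymp t^{1/d}$: Proposition~\ref{prop:lowerbound} gives $\delta^I(t)\le n^{1/2+\epsilon}$, and the combination of Proposition~\ref{prop-standby} with Theorem~\ref{thm:controlling} gives $\delta^O(t)\le n^{1-1/(2d)+\epsilon}$. Converted to $t$, the outer bound is $\delta^O(t)\lesssim t^{1/d-1/(2d^2)+\epsilon}$, e.g.\ about $t^{5/18}$ when $d=3$, far below your $t^{1/2+\epsilon}$. Your range-domination argument cannot reach this level, because among $t$ independent critical BRWs the maximal range really is of order $\sqrt t$ (both bounds in Proposition~\ref{le:distance} are $\Theta(R^{-2})$), so the inclusion you use is saturated at that scale. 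The Random Barrier Growth machinery is designed precisely to capture the gain from settling that the crude range bound throws away. In short: your proof suffices for the exponents displayed in \eqref{main-error}, but the paper's route is the one that delivers the bounds the authors are really after.
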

We conclude with  a deviation result which is interesting on its own, expressing the fact that a number of particles proportional to the volume of a ball  $B_n$ covers it with overwhelming probability when $d > 2$,
regardless of their specific placement inside $B_{n/2}$.
Moreover, the same holds with uniformly positive probability when $d=2$.

Let $\eta : \mathbb{Z} \mapsto \mathbb{N} $ be an arbitrary particle configuration supported inside the ball $B_{n/2}$ for some $n \geq 1$, where $\eta (z)$ denotes the number of particles at $z$ for all $z \in \Z^d$. Recall that $\mathscr{S}(\eta )$ denotes the BIDLA stabilization of $\eta$.
For a subset $K $ of $\mathbb{Z}^d$ we denote by $|K| $ the cardinality of $K$, while if $\eta $ is a particle configuration we let
	\[ |\eta| := \sum_{x \in \mathbb{Z}^d} \eta(x) \]
denote the total number of particles in $\eta$. 
\begin{Lemma}\label{lem-covering}
For any $d>2$  there exists a constant  $\alpha $ large enough such that for any integer $n$ and any particle configuration $\eta$ supported in $B_{n/2}$ with $|\eta| \geq  \alpha  |B_n|$, it holds that 
\begin{equation}\label{ineq-covering1}
\bP\big(B_n\not\subseteq \mathscr{S}(\eta ) \big)\le
 \exp\Big(-C \frac{\alpha\cdot n}{\log n\cdot \1_{d=3}+\1_{d>3}}\Big)
\end{equation}
for some positive and finite constants  $C$,  depending only on the dimension.

Moreover, for any $d \geq 2$ there exists a constant  $\alpha $ large enough such that for any integer $n$ and any particle configuration $\eta$ supported in $B_{n/2}$ with $|\eta| \leq  \alpha \cdot |B_n|$, it holds that 
\begin{equation}\label{ineq-covering2}
\bP\big(B_n\not\subseteq \mathscr{S}(\eta ) \big)\geq 
\exp\Big( - c \cdot \alpha\cdot n^{d-2}\Big)
\end{equation}
for some positive and finite constants  $c$,  depending only on the dimension. 
\end{Lemma}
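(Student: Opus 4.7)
The plan for the upper bound \eqref{ineq-covering1} is to bound $\bP(z \notin \mathscr{S}(\eta))$ for each fixed $z \in B_n$ by a quantity of the stated form, and then take a union bound over the $\lesssim n^d$ sites of $B_n$, absorbing the polynomial loss by choosing $\alpha$ sufficiently large. For a single $z$ I would first exploit the Abelian property of BIDLA to process the BRWs launched from $\eta$ in a convenient order, and couple $\mathscr{S}(\eta)$ with the Random Barrier Growth process introduced earlier in the paper; this auxiliary process is built from \emph{free} (non-settling) critical BRWs and is stochastically dominated by $\mathscr{S}(\eta)$, so that if the barrier process already covers $z$ then so does $\mathscr{S}(\eta)$. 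The covering question thus reduces to a positivity statement for $N_z$, the total number of free BRW-visits at $z$.

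The expected visits at $z$ from a single free critical BRW started at $x$ equal the simple random walk Green function $G(x,z)$: by the mean-one branching, the expected occupation measure of a critical BRW evolves as the SRW heat kernel, which integrates in time to $G$. Since $G(x,z) \geq c\, n^{2-d}$ uniformly in $x\in B_{n/2}$ for $z\in B_n$, one obtains
\[
\bE[N_z] \;=\; \sum_x \eta(x)\,G(x,z) \;\gtrsim\; \alpha\,|B_n|\, n^{2-d} \;\asymp\; \alpha\, n^2 .
\]
The hardest step is concentration: although $N_z$ is a sum of $|\eta|$ independent contributions, each has polynomially heavy tails (the total progeny $\tau$ of a critical Galton--Watson process satisfies $\bP(\tau \geq k) \asymp k^{-1/2}$, and the single-site visit count inherits a comparable tail), so Chernoff-type bounds do not apply off the shelf. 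I would truncate each summand at a level $T \asymp n^2$ in $d \geq 4$, respectively $T \asymp n^2 \log n$ in $d=3$, apply a Bernstein/Bennett-type inequality to the truncated sum, and separately control the discarded mass by Kolmogorov--Yaglom survival estimates for critical BRW. This should yield $\bP(N_z=0) \leq \exp(-C\alpha n/(\log n\cdot\1_{d=3}+\1_{d>3}))$, after which a union bound gives \eqref{ineq-covering1}.

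For the lower bound \eqref{ineq-covering2} the strategy is to exhibit an inexpensive bad event. Let $\mathcal{F}$ be the event that every BRW launched from a particle of $\eta$ becomes extinct before any of its particles reaches the outer shell $B_n \setminus B_{3n/4}$. Under assumption \eqref{assumption} the Kolmogorov--Yaglom estimate for continuous-time critical BRW gives that a single BRW started in $B_{n/2}$ travels distance $\geq n/4$ only with probability $O(n^{-2})$, essentially the survival probability up to the diffusive time $(n/4)^2$. Since the BRWs associated to distinct particles of $\eta$ are independent,
\[
\bP(\mathcal{F}) \;\geq\; (1-C/n^2)^{|\eta|} \;\geq\; \exp(-c\alpha n^{d-2})
\]
whenever $|\eta|\leq \alpha |B_n|$. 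On $\mathcal F$ no site of the outer shell is ever visited, hence $B_n \not\subseteq \mathscr{S}(\eta)$, and \eqref{ineq-covering2} follows.

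The main obstacle I anticipate is the concentration argument for the upper bound: the heavy tails intrinsic to critical BRW progeny and single-site occupation prevent direct exponential concentration, so the truncation level has to be tuned carefully. Producing exactly the $\log n$ enlargement in $d=3$ is the most delicate point, as $d=3$ is the borderline where the single-BRW contributions remain only marginally integrable.
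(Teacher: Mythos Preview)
Your lower bound argument is correct and matches the paper's proof: the event that no BRW launched from $\eta$ reaches $\partial B_n$ has probability at least $(1-C/n^2)^{|\eta|} \geq \exp(-c\alpha n^{d-2})$ by Proposition~\ref{le:distance}, and on this event $B_n \not\subseteq \mathscr{S}(\eta)$.

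Your upper bound argument, however, has a genuine gap in the reduction step. The Random Barrier Growth process of Section~\ref{sec:outerbound} is designed for \emph{outer} bounds: by the Least Action Principle (Corollary~\ref{cor-free}) it grows \emph{faster} than BIDLA, so it dominates $\mathscr{S}(\eta)$ from above, not below. There is no auxiliary process built from free BRWs that is stochastically dominated by $\mathscr{S}(\eta)$ in the way you describe, and the implication ``$N_z>0 \Rightarrow z\in\mathscr{S}(\eta)$'' is simply false: a free BRW trajectory passing through $z$ may correspond to a BIDLA particle that settled long before reaching $z$. Consequently the concentration you propose, even if it succeeded, would not control $\bP(z\notin\mathscr{S}(\eta))$.

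The paper instead uses the Lawler--Bramson--Griffeath ghost-particle decoupling of Section~\ref{sec:classical}: from the identity $\cS^\eta_{B_R}(z)+\tilde\ell^{\eta_G}_{B_R}(z)=\ell^\eta_{B_R}(z)$ one passes to the inequality $\cS^\eta_{B_R}(z)+\tilde\ell^{B_R}_{B_R}(z)\geq \ell^\eta_{B_R}(z)$ with the two left-hand terms \emph{independent}, so that $\{\cS^\eta_{B_R}(z)=0\}$ forces the ghost term to be at least the free local time. The bound~\eqref{startingpoint} then follows by exponential Markov, with the second-moment input supplied directly by Lemmas~\ref{lem-l2} and~\ref{lem-2moment} (this is where the $\log n$ in $d=3$ arises), rather than by a truncation/Bernstein scheme. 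A union bound over $z$ in the annulus $B_n\setminus B_{2n/3}$, followed by an iteration to fill the inner ball $B_{2n/3}$, finishes the proof.
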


In contrast, for classical  IDLA, under the same hypotheses, the upper bound reads 
\begin{equation}\label{srw-covering}
\bP\big(B_n\not\subseteq \mathscr{S} (\eta )  \big)\le
\exp\Big(-C \frac{\alpha\cdot n^2}{\log n\cdot \1_{d=2}+\1_{d>2}}\Big) 
\end{equation}
(see Lemma 1.3 of \cite{asselah2013sublogarithmic}). 
This shows that, even though in dimension $d=3$ both classical IDLA and BIDLA have a spherical shape theorem, the large deviation behaviours of the two processes differ significantly.

\subsection{Sketch of the proof}
We include here a brief overview of the proof of Theorem \ref{th:main}. 

\subsubsection{No shape theorem in low dimension}
In order to show that BIDLA is infinitely often  asymmetric  almost surely, it suffices to prove that once in a symmetric state, the probability that the process gets to an asymmetric state in one step\footnote{Here we consider the jump process associated to BIDLA, so that the cluster changes at every step, as explained at the beginning of Section \ref{sec:d2}.} is uniformly positive. To this end, we show that when launching one BRW from the origin conditioned on exiting the current cluster, the following happens with positive probability: 
\begin{itemize}
\item[(i)] If the cluster is close to a ball $B_n$, the BRW reaches the inner boundary of $B_n$ with order $n^2$ particles. 
\item[(ii)] These particles are clumped together.
\item[(iii)] Settling the localised particles creates the claimed asymmetry. 
\end{itemize}
Points (i) and (ii) are properties of a single BRW conditioned on travelling to distance $n$ from its starting point, and are proved in Section \ref{sec:proofd2}.  The third statement (iii) accounts for settling, and it is proved by combining the classical approach of Lawler, Bramson and Griffeath with new second moment estimates on the local times of BRW. The necessity of controlling higher moments of the local times, rather than only the first moment, is a feature of Branching IDLA, and it stems from the randomness of the number of individuals during the time evolution of a single BRW. The needed second moment estimates are proved in Section~\ref{sec:Green}.

\subsubsection{Shape theorem in high dimension}
Our proof of the shape theorem for $d>2$ consists of three steps, namely: 
\begin{itemize}
\item[Step 1.] Establishing an \emph{inner bound}, that is proving that the aggregate contains a Euclidean ball of roughly the right radius, even when particles are \emph{frozen} upon reaching the boundary of this ball. 
\item[Step 2.] Bounding the number of frozen particles on the boundary of the filled ball. 
\item[Step 3.] Releasing the frozen particles in \emph{waves}, and iteratively showing that each wave results in the settling of a positive proportion of the particles. This sets up a \emph{contraction} in the number of particles which, after sufficiently many iterations,  gives the desired outer bound. 
\end{itemize}
Step 1 is carried out in Section \ref{sec:innerbound} following the classical approach by Lawler, Bramson and Griffeath \cite{lawler1992internal} together with an independence property observed first by Asselah and Gaudilli\`ere in \cite{asselah2013logarithmic}. 
We then take care of Step 2 in Proposition \ref{prop-standby}, where we show that, on the event that the inner ball is filled (which has high probability by Step 1), only a small proportion of particles is frozen on the boundary of the ball. This requires the finite exponential moment assumption \eqref{assumption} on the offspring distribution $\nu$, which is in force throughout the paper. 

The main technical novelty in our approach comes in Step 3, where we have to show that releasing the frozen particles does not cause the cluster to grow macroscopically. This is typically easy for classical IDLA (see the robust approaches of \cite{lawler1992internal} and \cite{duminil2013containing}), but for Branching IDLA it becomes far from trivial due to the creation/loss of particles, as well as the fact that close-by particles can help each other travel far within one BIDLA step. To control the growth of the aggregate we introduce a new auxiliary process called \emph{Random Barrier Growth},   which is both easier to control than BIDLA and it \textit{grows  faster} than BIDLA itself. Hence, thanks to the Abelian property, from an outer bound on the auxiliary process, we can deduce an outer bound for BIDLA clusters. 

In \emph{Random Barrier Growth} settling is only allowed on spherical layers.
This restriction, while artificial at first sight, turns out to be a powerful tool,  because the Green's function of a random walk killed when leaving a ball is much better understood,   thereby giving us a precise handle on the number of particles that settle on the layer. 
A further crucial ingredient is that these spherical layers are not fixed in advance, but are sampled at random according to a carefully chosen distribution.  
This randomness plays a decisive role in providing an upper bound for the number of particles which do not settle on the layer (see also Remark \ref{rem:layer}).
Iterating this construction for successive layers, one obtains a contraction mechanism which ultimately yields the outer bound for BIDLA clusters. 
This construction is carried out in Section \ref{sec:outerbound}, with the crucial contraction bound stated in Proposition \ref{prop:settlmentsphericalprocess}.

\subsection{Related models and open questions}
As mentioned above, Internal DLA aggregates, or \emph{clusters}, grow according to the harmonic measure on the cluster's boundary seen from an \emph{internal} point (the origin). The dynamics is therefore smoothing, in that arms grow slowly and fjords are filled quickly. This makes the shape theorem stable under small changes of the model, such as modifying the base graph or the transition probabilities of the driving simple random walks. See e.g.\  \cite{duminil2013containing,lucas2014limiting,silvestri2020internal,bou2024internal} and references therein for some results in this direction. We also mention a recent work by 
Benjamini, Duminil-Copin, Kozma and Lucas, who 
introduced a variant of IDLA where the starting point of a new random walk is chosen uniformly at random inside the aggregate, thereby proving that the limiting shape  remains  spherical  \cite{benjamini2017internal}. 

If, on the other hand, rather than using the internal harmonic measure  one grows clusters according to the \emph{external harmonic measure} on the boundary, i.e.\ the hitting measure of a simple random walk starting far away from the aggregate, one obtains \emph{Diffusion Limited Aggregation}, in short DLA, a model which is notoriously hard to analyse \cite{witten1981diffusion,derrida1992needle,hastings1998laplacian}. DLA clusters are fractal-looking, with a few arms growing faster than the bulk. The mechanism for this reinforcement is still escaping a rigorous understanding, the only available result on $\mathbb Z^d$ dating back to 1987, when Kesten proved an upper bound for the growth rate of DLA clusters \cite{kesten1987long}.

In order to tame the reinforcement leading to fractality, one could grow clusters according to a \emph{fractional} power of the external harmonic measure. This defines \emph{Dielectric Breakdown Models}, and it reduces to the \emph{Eden Model} when this power vanishes \cite{niemeyer1984fractal,eden1961two}. While the Eden model is well studied due to its coupling with \emph{First Passage Percolation}, a mathematical  description of Dielectric Breakdown aggregates is  still far out of reach. We mention the recent work of Losev and Smirnov \cite{losev2025long}, which provides  bounds for the clusters' growth rate \`a la Kesten.  
Recent progress on the analysis of two-dimensional random aggregation models where the growth is driven by a fractional power of the (external) harmonic measure was achieved by working in the continuum, where clusters can be described via random conformal maps, and the correlation strength between those maps determines the boundary growth rate. See \cite{hastings1998laplacian,norris2023scaling,norris2024stability} and references therein on Hastings-Levitov models and Aggregate Loewner Evolution. 

From the point of view of particle systems, Internal DLA belongs to a larger family of models called 
\emph{Activated Random Walks}, where settling is neither eternal nor instantaneous: 
a particle stops (or settles) at an exponential time of rate $\lambda$, called the \emph{sleep parameter},  and it regains activity if another particle steps on its settling site. The dynamics of an ARW system depends on the parameter $\lambda$ and on the initial configuration of particles. Classical IDLA corresponds to $\lambda=\infty$ (particles settle, or fall asleep, instantaneously when at an empty site), and all particles started at the origin. When $\lambda < \infty $ the analysis is more difficult due to the long-range correlations and the non-monotonicity of the system. According to the values of the sleep parameters and particle density, ARW in infinite volume  undergoes a \emph{phase transition} from eternal activity to stabilization, which has been object of extensive research \cite{rolla2012absorbing,rolla2019universality}. The main interest in this model lies in the fact that it is widely believed to exhibit \emph{Self Organised Criticality} and \emph{Universality} \cite{bak1988self,rolla2020activated,levine2024universality}, which was recently proved to be the case in dimension $d=1$ by Hoffman, Johnson and Junge  \cite{hoffman2024density}. 
One could also look at ARW dynamics as a growth process by releasing finitely many particles from the origin and asking whether the visited region until stabilization approaches a deterministic shape. A first result in this direction was obtained in dimension one by Levine and Silvestri \cite{levine2021far}, followed by the proof of a shape theorem by Hoffman, Johnson and Junge \cite{hoffman2024density}. 

The problem of describing the aggregate’s growth has also been studied in the so-called Oil and Water model~\cite{bond_levine_abelian_networks_I}, which is conjectured to satisfy a spherical shape theorem as in classical IDLA, though with a slower growth rate.
This conjecture has been partially resolved in one dimension by Candellero, Ganguly,  Hoffman,  Levine~\cite{hoffman_candellero_ganguly_levine_oil_and_water}.
As shown in~\cite{candellero_stauffer_taggi_oil_and_water}, 
the model can be described in terms of  BRW whose offspring law is random,  mainly critical but   occasionally  subcritical.
We believe  that the techniques introduced in the present article could  contribute to further progress on  Oil and Water  and related models.

\subsubsection{Open questions}
We conclude with some open questions which we believe deserve further investigation. 
In dimensions $d>2$, where a shape theorem is available, a central challenge is to determine the size of the fluctuations. 
Unlike classical IDLA, we expect these fluctuations to grow on a polynomial scale, and pinning down their precise order would provide a deeper understanding of the model.   
In two dimensions it would be very interesting to exclude the existence of a deterministic non-spherical limiting shape, thereby confirming a fundamental qualitative difference with the classical case. Is there a Markovian dynamics defined directly in the continuum which could be seen as the evolution in time of rescaled BIDLA in $d=2$? 
Finally, the behaviour under sub-critical offspring distributions~$\nu$ remains largely unexplored. In this case we expect the limiting shape to lose the spherical symmetry, as sub-critical BRWs conditioned to survive up to a large distance from their origin would be ballistic rather than diffusive. 

\subsection*{Organization of the paper} 
The paper is organized as follows. 
In Section~\ref{sec:Models} we provide a formal definition of Branching Random Walks
and Branching IDLA, and fix the notation. 
In Section~\ref{sec:prelim} we present new estimates for the second moment of the local time of BRW restricted to finite sets, and discuss a classical approach to inner bounds for IDLA. 
In Section~\ref{sec:d2} we prove the absence of a spherical shape theorem in  dimension $d=2$. The case $d=1$ follows along the same lines, and we leave it to the reader since easier.  
In Section~\ref{sec:innerbound} we establish an inner bound on the growth of the aggregate for $d>2$, 
and derive an upper bound on the number of particles escaping such a bound. 
In Section~\ref{sec:outerbound} we introduce the new auxiliary process, 
the \emph{Random Barrier Growth}, and 
state our main result providing an outer bound on the growth of this process. 
We then conclude the proof of the shape theorem.

\subsection*{Notation}
We use $\mathbb{N} = \{0, 1, \ldots, \}$ and $\mathbb{Z}_+  = \{1, 2, \ldots\}$.
For any  subset $K \subset \mathbb Z^d$ we denote by
	\[ \partial K := \{ y\in \mathbb{Z}^d \setminus K
	 : y\sim x \mbox{ for some } x \in K \} \]
its exterior boundary. 
We denote by $\| x \|$ the Euclidean norm of $x \in \mathbb{Z}^d$. 
The constants  $c, C$, whose value may vary from line to line, will always be positive and finite, and may depend only on the dimension.   Any other dependence of the constants on other quantities will be mentioned explicitly.  
When a constant needs to be referred to later, we will denote it by a specific number or letter rather than by $c$ or $C$. 
Finally, for any $R>0$ we denote by $B_R$ the Euclidean ball of radius $R$ centred at the origin, i.e.\ $B_R = \{ x \in \Z^d : \| x\|< R\}$, and write $|B_R| =  (\omega_d + o(1)) R^d$ for its cardinality, where $\omega_d$ is the volume of the unit ball in $\R^d$. 

\section{Definition of BIDLA}\label{sec:Models}
We start by recalling the definition of Branching Random Walks, which will later describe the dynamics of the particles, or \emph{explorers}, in BIDLA. 

\subsection{Branching Random Walks} \label{sec:BRW}
We let $\mathbf {\mathcal T}$ be a critical Bienaym\'e-Galton-Watson tree, whose offspring distribution $\nu$ satisfies Assumption \eqref{assumption}.
We think of $\mathbf {\mathcal T}$ as a set of vertices, and call $E(\mathbf {\mathcal T})$ the
edges of $\mathbf {\mathcal T}$, where there is one edge between an individual and each child.
A Branching Random Walk (BRW) is the associated tree-indexed random walk 
 $ (S_u )_{ u\in \mathbf {\mathcal T}}$, where
time is encoded by the tree $\mathbf{\mathcal T}$ and
whose jump distribution is the uniform measure on the neighbours of the origin.
In other words, independent increments $(\xi(e))_{ e\in E(\mathbf {\mathcal T})}$ 
are associated to the edges of the tree, and if
$[\emptyset,u]$ is the sequence of edges between the root $\emptyset$ and vertex $u$, then
we choose an initial position $ z\in \mathbb Z^d$, and set $S_\emptyset=z$ and 
\[
 S_u^z=z+\sum_{e\in [\emptyset,u]}\xi(e)
 \qquad \forall u\in \mathbf {\mathcal T}. 
\]
Given $x\in  \Z^d$, we define the time spent at $x$ by the BRW as
the number of individuals $u$ in the tree such that $S^z_u=x$, that is
\begin{equation}\label{def-localtimes}
\ell^z(x) := \sum_{u\in \mathcal T}\1 ( S^z_u=x) ,   
\end{equation}
and we refer to $\ell^z(x)$ as the \emph{local time} at $x$ of a BRW started at $z$. 
For a finite subset $K \subset \mathbb Z^d$ we denote the local time at $K $ by 
\[ 
\ell^z(K):=\sum_{x\in K}\ell^z(x) . 
\]
For $u,v\in \mathbf{\mathcal T}$, we say that $v$ is an ancestor of $u$, denoted $v\prec u$, 
if $v$ belong to the geodesic between the root and $u$, excluding $u$. If we want to
include $u$, we say $v\preceq u$. 
\subsubsection{BRW restricted to a domain} \label{sec:restricted}
It is convenient to consider the branching random walk
restricted to a domain $K\subseteq \mathbb Z^d$. Let 
\begin{equation}\label{def-stoppedBRW}
\mathcal T^z(K):=\{u\in \mathbf{\mathcal T}:\ \forall v\preceq u,\ S^z_v\in K\}, 
\end{equation}
and observe that $\mathcal T^z(K)\subseteq K$. Denote  the set of individuals on $\partial K$ whose ancestors are all in $K$ by 
\[
\partial_{K} \mathcal T^z:=\{u\in  \mathcal T^z:\ 
 S^z_u\in \partial K \, \mbox{ and }  S^z_v \in K \;\; 
\forall v\prec u\}.
\]
In words, $\cT^z(K)$ is the collection of individuals whose ancestral lines belong to $K$, whereas
$\partial_{K}\cT^z$ is the set of so-called \emph{pioneers}  on $\partial K$, namely those individuals  whose ancestors are all in $K$, but who have escaped $K$, and because of the nearest neighbour nature of
the increments of the BRW, they are on $\partial K$. We can define local times for the restricted
BRW  $\cT^z(K)$, as a set function on the whole of $K\cup\partial K$ as follows: 
\begin{equation}\label{def-localtimes-Lambda}
\begin{split} 
\ell^z_{K}(x) &  := \sum_{u\in \mathcal T^z(K)}\1 ( S^z_u =x ),
\quad \mbox{for }  x\in K, \\
\ell^z_{K}(x)  & := \sum_{u\in \partial_{K}\mathcal T^z}\1 (S^z_u =x),
\quad \mbox{for }  x\in \partial K . 
\end{split}
\end{equation}
The  local time of a collection of independent Branching Random Walks restricted to $K$ starting at locations 
$\eta = (x_1,  \ldots, x_n)$  is denoted by $ \ell^\eta_{K}$.
When $K = \mathbb{Z}^d$ we simply use $ \ell^\eta$.

\subsection{Branching IDLA} \label{sec:BIDLA}
We give here a formal definition of the BIDLA process using the formalism of particle systems. This definition may, at first sight, look different from that presented in the introduction. The equivalence between the two definitions follows from the Abelian property, which, roughly speaking, gives some freedom on how to stabilise particle configurations without altering their statistics. The fact that classical IDLA enjoys the Abelian property is well known \cite{diaconis1991growth,levine2018long}. We give a detailed proof of the Abelian property for BIDLA in Appendix \ref{app:Abelian}, which may be skipped upon first reading. 

A BIDLA process on $\mathbb{Z}^d$  is defined as follows. 
A particle configuration on $\mathbb Z^d$ is a map $\eta : \mathbb Z^d \to \mathbb N$, where $\eta (z)$ denotes the number of particles at $z\in \mathbb Z^d$. Particles are indistinguishable. 
We build the aggregate through sequential {\it toppling} of {\it unstable} sites:
we say that the site $z$ is \emph{unstable} for the configuration $\eta$  if $\eta (z) > 1$, and
that $\eta$ is \emph{stable} if for any $z\in \mathbb Z^d$, $\eta(z)\le 1$ (at most one particle per site). Choose an arbitrary ordering of the sites of $\mathbb Z^d$, for instance lexicographical ordering.
If a particle configuration $\eta$ is unstable, we define its \emph{BIDLA stabilization} as follows. 
In $\eta$, take the first unstable site, and perform a \emph{toppling}:  
one particle at that site dies after producing a random number of children with law $\nu$ at the same site and, instantaneously, each child takes an independent simple random walk step, thus moving to a random neighbouring site.  
Since the offspring law $\nu$ is critical, iterating this procedure will make the particle configuration stable after finitely many topplings, almost surely. We denote this stable configuration by $\mathscr{S} (\eta )$, and we call it the BIDLA stabilization of the configuration $\eta$.

We consider the evolution of an interacting particle system made of critical Branching Random Walks.
Each particle has two states: {\it settled} or {\it active} (i.e.\ {\it unsettled}). A settled particle stays in place forever, and there is at most one settled particle per site of $\mathbb Z^d$.
The positions of the settled particles define a finite subset of $\mathbb Z^d$ called the {\it explored region}, and this region evolves until all particles are eventually settled, which happens almost surely.

Each unsettled particle has a {\it critical} branching mechanism: it produces a random number of children (or \emph{offsprings}), and each child independently moves away from its parent's position according to a simple random walk step. 
We call this action a {\it toppling}. More formally, a toppling
consists of replacing a particle at an unstable site $x$ (and unstable means $\eta(x)>1$) by $k$ children with probability $\nu(k)$, where we recall that 
 \begin{equation}\label{def-criticalmu}
  \sum_{k\ge 0} k\cdot  \nu(k)=1
 \end{equation}
by Assumption \eqref{assumption}. 
These $k$ children are given independent simple random walk  increments $(\xi_j,\ j\le k)$, 
and the particle configuration $\eta$ is replaced by 
\[
\eta -\1_{x}+\sum_{j=1}^k \1_{x+\xi_j}.
\]
The dynamics stops when no toppling is possible, that is when  $\eta(x)\le 1$ for all $x\in \mathbb Z^d$, and the resulting stable configuration is denoted by $ \mathscr{S}(\eta)$. 

With this notation in place, a BIDLA process $(A(t))_{t\in \mathbb N }$ is defined inductively by setting $A(0) = \emptyset$ and, for each $t\geq 0$, 
	\[ A(t+1) := \mathscr{S} ( A(t) + \1_0 ) . \]
In words, at each discrete time step $t\geq 1$ add a particle  at the origin and perform BIDLA stabilization. 

\begin{Remark}
We remark that the definition of BIDLA presented in the introduction simply corresponds to choosing a particular order of the topplings, the one dictated from the exponential clocks associated to the BRW particles. By the Abelian property, which is spelled out in Appendix \ref{app:Abelian}, the law of the stable configuration does not depend on the order of the topplings, thus the two definitions of BIDLA are equivalent. 
\end{Remark}

Note that, since $\nu $ is assumed to be critical, 
the total number of particles is a martingale. Moreover, the stabilization time is a stopping time in the natural filtration associated to each driving Branching Random Walk (or, alternatively, to each toppling). Thus, recalling that $|\eta |$ denotes the number of particles in $\eta$, we have the following. 
\begin{Lemma}\label{lem-growth}
For any dimension $d\geq 1$ and any finite particle configuration $\eta$ on $\mathbb{Z}^d$, we have that
\begin{equation}\label{eq-growth}
\bE[|\sS(\eta)|]=|\eta|.
\end{equation}
\end{Lemma}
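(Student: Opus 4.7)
The plan is to prove the identity via optional stopping applied to the total-mass martingale of the toppling dynamics. Let $N_n$ denote the number of particles after $n$ topplings, so $N_0=|\eta|$. Each toppling removes one particle from an unstable site and replaces it by $\xi\sim\nu$ offspring distributed to its nearest neighbours; by the mean-one condition in Assumption \eqref{assumption}, $(N_n)$ is a martingale in the natural filtration of the topplings, with $\bE[N_{n+1}-N_n\mid\mathcal{F}_n]=\bE[\xi]-1=0$ whenever $n<\tau$. Denoting by $\tau$ the stabilization time, which is a.s.\ finite as noted in the text, the stable configuration $\sS(\eta)$ has at most one particle per site, and hence $N_\tau=|\sS(\eta)|$. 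Therefore the claim reduces to the assertion $\bE[N_\tau]=N_0$.

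I would next apply optional stopping to $(N_n)$ at $\tau$. Truncating at the bounded stopping time $\tau\wedge n$ immediately gives $\bE[N_{\tau\wedge n}]=N_0$ for every $n$, and $N_{\tau\wedge n}\to N_\tau$ almost surely. Fatou's lemma applied to the non-negative martingale $(N_{\tau\wedge n})$ yields the upper bound $\bE[N_\tau]\le N_0$ without further work. For the matching lower bound one needs uniform integrability of the family $(N_{\tau\wedge n})_n$, which can be obtained from the orthogonality identity
\[
\bE\bigl[N_{\tau\wedge n}^2\bigr]=N_0^2+\sigma^2\,\bE[\tau\wedge n],
\]
valid because the martingale increments $\xi_i-1$ are i.i.d.\ with variance $\sigma^2<\infty$ under Assumption \eqref{assumption}. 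This provides an $L^2$-bound, and hence uniform integrability, as soon as $\bE[\tau]<\infty$.

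The main obstacle is therefore to verify $\bE[\tau]<\infty$, i.e.\ to control the expected number of topplings carried out during the stabilization of $\eta$. Here I would exploit the finiteness of the support of $\eta$ together with the exponential moment condition on $\nu$: each particle in $\eta$ generates its own critical BRW whose contribution to $\tau$ counts only those vertices landing on currently occupied sites (settled vertices produce no further topplings). Dominating the total toppling count by quantities associated to finitely many free critical BRWs and invoking standard first-moment estimates on the range, together with the exponential tail of $\nu$, yields the required bound. Once uniform integrability is in place, $N_{\tau\wedge n}\to N_\tau$ in $L^1$, so $\bE[N_\tau]=N_0$, which combined with the identity $N_\tau=|\sS(\eta)|$ proves $\bE[|\sS(\eta)|]=|\eta|$.
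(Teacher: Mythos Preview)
Your overall approach matches the paper's: both invoke the martingale property of the particle count under a critical offspring law and apply optional stopping at the stabilization time~$\tau$. The paper states this without justifying the application of optional stopping; you correctly identify uniform integrability as the missing ingredient and reduce it, via the variance identity $\bE[N_{\tau\wedge n}^2]=N_0^2+\sigma^2\,\bE[\tau\wedge n]$, to showing $\bE[\tau]<\infty$.

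The gap is in your argument for $\bE[\tau]<\infty$. You propose to dominate the toppling count by ``quantities associated to finitely many free critical BRWs'' and to invoke first-moment estimates on the range, but a critical Galton--Watson tree has \emph{infinite} expected total progeny (indeed $\bP(|\mathcal T|>n)\asymp n^{-1/2}$ by Otter--Dwass), and the expected range of a critical BRW is likewise infinite in every dimension. So this domination yields only the trivial bound $\bE[\tau]\le\infty$; the exponential-moment hypothesis on~$\nu$ controls single-generation offspring, not total progeny, and does not rescue the argument. In fact it is not clear that $\bE[\tau]<\infty$ holds at all, so this is a genuine obstacle rather than a missing citation.

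A route that avoids this issue is to first stabilize inside a large ball $B_R$: there the toppling count is dominated by the local time of the free BRWs restricted to $B_R$, which has finite expectation $\sum_{y\in\eta}\sum_{x\in B_R}G_R(y,x)<\infty$, so Wald's identity gives $\bE[X_R+F_R]=|\eta|$ with $X_R$ the number of particles settled in $B_R$ and $F_R$ the number frozen on $\partial B_R$. Since the aggregate only grows under further stabilization, $X_R\uparrow|\sS(\eta)|$ monotonically, and combining Fatou with the monotone convergence theorem reduces the lemma to showing $\bE[F_R]\to0$. This last step still requires an argument, but it replaces your unjustified claim $\bE[\tau]<\infty$ with a more tractable tail estimate on the frozen pioneers.
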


\subsection{Stabilization in a finite domain} \label{sec:domain} 
As for classical IDLA, when working with BIDLA it is often convenient to \emph{partially stabilize} a particle  configuration $\eta$ inside a given domain $K \subseteq \mathbb{Z}^d$. This means that we only topple sites in $K$, and when a particles steps outside of $K$ we declare it {\it frozen}, thus pausing its BRW evolution. Almost surely, after finitely many topplings this procedure will result in a stable configuration inside $K$, together with a collection of frozen particles on the exterior boundary $\partial K$. We denote this configuration by $\mathscr{S}^\eta_K$. Hence by definition $\mathscr{S}^\eta_K (z) \leq 1$ for all $z \in K$, while if $z \in \partial K $ then $\mathscr{S}^\eta_K (z) \in \mathbb{N}$ denotes the number of frozen particles at $z$. These particles are on standby, and their evolution may be continued at a later time. Importantly,  by the Abelian property
	\[ \cS_{\mathbb Z^d}^{\eta}\stackrel{\text{def}}{=} 
	\mathscr{S} ( \eta ) 
	\stackrel{\text{law}}{=} \mathscr{S} ( \cS_K^{\eta} ).
	\]
In words, freezing particles and releasing them at a later time will result in a stable configuration which has the same statistics as the one obtained by stabilising without freezing. See Appendix \ref{app:Abelian} for a detailed discussion on the Abelian property.

\section{Preliminaries}\label{sec:prelim}
We collect in this section some preliminary results on BRWs and Green's function estimates, which will be used throughout the paper. From here onwards,  we denote by $c,C$ constants which may depend only on the dimension $d$, and which are allowed to change from line to line. 
\subsection{On Branching Random Walks}
Let $K \subset \mathbb{Z}^d$ be a finite subset. 
Recall from Section \ref{sec:restricted} that if $y \in  K$ then $ \ell^y_K (z)$ denotes the local time at $z$ of a BRW starting at $y$ with freezing upon exiting $K$. Then 
\begin{equation}\label{eq:pioneers}
	\ell_K^y(\partial K) := \sum\limits_{z \in \partial K} \ell_K^{ y } (z)
\end{equation}
counts the number of frozen particles on $\partial K$. We call these particles \emph{pioneers}, as they are the first individuals in the BRW process to explore the space outside $K$. 

The first result of this section tells us that the probability that a BRW survives to distance $R$ from its starting location is of order $\Theta ( 1/R^2 )$, independently of the dimension.

\begin{Proposition}[\cite{asselah2022time}, Lemma 3.8] 
\label{le:distance}
For any  $d \geq 1$ there exist constants $c, C > 0$, only depending on the dimension $d$, such that for any $R \geq 1$,
\[\begin{split} 
& 
\inf_{y \in B_R} \mathbb{P} \big(  \ell^y_{B_R}(\partial B_R)  > 0 \big) \geq \frac{c}{R^2}, 
\\ 
& 
\sup_{y \in B_{R/2}}  \mathbb{P}( \ell^y_{B_R}(\partial B_R) > 0) \leq \frac{C}{R^2}.
\end{split} 
\]
\end{Proposition}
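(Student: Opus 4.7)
The plan is to reduce both bounds to the classical Kolmogorov estimate that a critical Galton--Watson tree with offspring variance $\sigma^2$ survives to generation $n$ with probability $\sim 2/(\sigma^2 n)$. The relevant time scale is $n \asymp R^2$, since nearest-neighbour SRW needs $\Theta(R^2)$ steps to travel distance $R$; this is why the answer is of order $1/R^2$ in every dimension. Throughout, write $N := \ell^y_{B_R}(\partial B_R)$ for the number of pioneers.

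For the lower bound, I would use Paley--Zygmund together with first and second moment estimates. Since the offspring mean equals $1$, the many-to-one formula gives
\[
\mathbb{E}[N] \;=\; \sum_{n \geq 0} \mathbb{P}^y\bigl(\tau_{B_R} = n\bigr) \;=\; \mathbb{P}^y(\tau_{B_R} < \infty) \;=\; 1,
\]
where $\tau_{B_R}$ is the first exit time of a single SRW from $B_R$. For $\mathbb{E}[N^2]$, I would decompose along an unordered pair of individuals according to their common ancestor: the branching event contributes a factor $\sigma^2$ (the second factorial moment of a mean-one law), and each of the two descendant sub-BRWs contributes expected pioneer count bounded by $1$ via the same first moment estimate. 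This yields
\[
\mathbb{E}[N(N-1)] \;\leq\; \sigma^2 \sum_{z \in B_R} G_{B_R}(y,z) \;=\; \sigma^2\, \mathbb{E}^y[\tau_{B_R}] \;\leq\; C R^2,
\]
using the standard fact that the expected exit time of SRW from $B_R$ is $O(R^2)$ uniformly in the starting point. Paley--Zygmund then gives $\mathbb{P}(N > 0) \geq \mathbb{E}[N]^2 / \mathbb{E}[N^2] \geq c/R^2$, uniformly in $y \in B_R$.

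For the upper bound, $\mathbb{E}[N] = 1$ rearranges to $\mathbb{P}(N > 0) = 1/\mathbb{E}[N \mid N > 0]$, so it suffices to prove $\mathbb{E}[N \mid N > 0] \geq c R^2$ for $y \in B_{R/2}$. The intuition is that conditioning on a pioneer existing forces the underlying Galton--Watson tree to live for at least of order $R^2$ generations: the first pioneer's ancestral line is a SRW of length $k$ running from $B_{R/2}$ to $\partial B_R$, and standard SRW deviation bounds make $k \leq \varepsilon R^2$ exponentially unlikely in $1/\varepsilon$. A Yaglom-type estimate then ensures that on this event the tree carries $\Omega(R^2)$ alive individuals near generation $R^2$, and a local CLT argument for the driving SRWs shows that a uniformly positive proportion of them eventually sits on $\partial B_R$, giving $N \gtrsim R^2$ with large conditional probability.

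The main technical obstacle is the many-to-two bookkeeping inside the restricted domain $B_R$ (identifying the split point, collecting the $\sigma^2$ factor from the branching event, and verifying the conditional independence of the descendant sub-BRWs until their respective exits) and, for the upper bound, carefully combining the Yaglom estimate with SRW dispersion bounds to produce a linear proportion of \emph{pioneers} rather than merely of alive individuals. Both steps rely in an essential way on the positivity of $\sigma^2$ in Assumption~\eqref{assumption}; the finite exponential moment is not needed here and is used only in later arguments.
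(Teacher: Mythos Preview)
The paper does not prove this statement; it is quoted from \cite{asselah2022time}, Lemma~3.8, so there is no in-paper argument to compare against. Your lower bound is correct and standard: the many-to-two identity gives exactly $\mathbb{E}[N(N-1)] = \sigma^2 \sum_{w\in B_R} G_R(y,w) = \sigma^2\,\mathbb{E}^y[\tau_{B_R}] \leq CR^2$, and Paley--Zygmund finishes.

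Your upper-bound sketch, however, has a real gap. The only \emph{unconditional} bound available from SRW deviations is $\mathbb{P}(N>0,\ T<\varepsilon R^2)\leq \mathbb{E}[\#\{\text{pioneers at generation }<\varepsilon R^2\}] = \mathbb{P}^y_{\mathrm{SRW}}(\tau_{B_R}<\varepsilon R^2)\leq Ce^{-c/\varepsilon}$; to make this a \emph{conditional} statement you must divide by $\mathbb{P}(N>0)\geq c/R^2$, which forces $\varepsilon\asymp 1/\log R$. You therefore only get tree height $\gtrsim R^2/\log R$ on the conditioned event, whence Yaglom yields $\mathbb{E}[N\mid N>0]\gtrsim R^2/\log R$ and $\mathbb{P}(N>0)\leq C\log R/R^2$, off by a logarithm. (The same loss appears if you bypass conditioning and split $\{N>0\}\subseteq\{T\geq k\}\cup\{\text{pioneer at generation}<k\}$ and optimize over $k$.) The step from ``$\Omega(R^2)$ alive individuals'' to ``$\Omega(R^2)$ pioneers on $\partial B_R$'' is also not free and would need its own argument.

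The sharp upper bound typically goes through the semilinear structure: $u(x):=\mathbb{P}^x(N>0)$ satisfies $1-u=f(1-Pu)$, where $f$ is the offspring generating function and $P$ the SRW transition operator, with boundary data $u\equiv 1$ on $\partial B_R$. For critical $\nu$ this reads $\Delta u \approx \tfrac{\sigma^2}{2}u^2$ in $B_R$, and comparison with the explicit supersolution $v(x)=A/(R-\|x\|+1)^2$ for suitable $A$ gives $u(y)\leq C/R^2$ uniformly over $y\in B_{R/2}$ with no logarithmic loss.
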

Conditional on survival up to $\partial B_R$, one may ask how many pioneers are frozen on $\partial B_R$. 

\begin{Proposition}[\cite{asselah2022time}, Theorem 1.3]
\label{prop:largedeviation}
For any $d\geq 1$ there exist positive constants $c $ and $\lambda_0$, depending only on the dimension $d$, such that for each $ \lambda \in (0,  \lambda_0]$ 
and for each $y \in B_R$
\begin{align}\label{eq:Amines estimate}
 \mathbb{E} \Big[  \exp\Big(  \lambda  \ell_{B_R}^y ( \partial B_R)  /R^2 \Big)  \Big]
  &  \leq e^{  c  \lambda / R^2 } \\
\label{eq:Amines estimate conditional}
  \mathbb{E} \Big[\exp \Big(   \lambda  \ell_{B_R}^y ( \partial B_R) / R^2 \Big)  \Big| \ell_{B_R}^y ( \partial B_R) > 0 \Big] &  \leq e^{  c  \lambda }. 
\end{align}
Moreover, writing $ \bar \ell_{B_R}^y := \ell_{B_R}^y ( \partial B_R) - \E [ \ell_{B_R}^y (\partial B_R) ] $, we have that 
\begin{equation}\label{eq:Amine_centered}
  \mathbb{E} \Big[\exp \Big(   \lambda \bar \ell_{B_R}^y  / R^2 \Big)  \Big| \ell_{B_R}^y ( \partial B_R) > 0 \Big]  \leq e^{  c  \lambda^2 / R^2 }. 
\end{equation} 
\end{Proposition}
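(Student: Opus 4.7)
The plan is to prove the three estimates via a single nonlinear functional equation satisfied by the Laplace transform, and then extract progressively sharper information from it.

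First, fix $\lambda>0$ and define $F_\lambda(y):=\bE\bigl[\exp(\lambda\,\ell^y_{B_R}(\partial B_R)/R^2)\bigr]$ for $y\in B_R\cup\partial B_R$. Conditioning on the branching event at the root of the driving BRW yields the recursion
\[
F_\lambda(y)=\phi\!\left(\frac{1}{2d}\sum_{z\sim y}F_\lambda(z)\right),\qquad y\in B_R,
\]
with boundary data $F_\lambda(y)=e^{\lambda/R^2}$ for $y\in\partial B_R$, where $\phi$ is the probability generating function of $\nu$. Assumption \eqref{assumption} ensures $\phi$ is analytic near $1$, with $\phi(1)=\phi'(1)=1$ and $\phi''(1)=\sigma^2$. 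Setting $g:=F_\lambda-1\ge 0$ and writing $\overline g(y):=\tfrac{1}{2d}\sum_{z\sim y}g(z)$, a second-order Taylor expansion rewrites the recursion as the discrete semilinear equation
\[
-\tfrac{1}{2d}\Delta g(y)=\tfrac{\sigma^2}{2}\,\overline g(y)^2+O\bigl(\overline g(y)^3\bigr),
\]
with boundary datum $g\equiv e^{\lambda/R^2}-1\le 2\lambda/R^2$ on $\partial B_R$, provided $\lambda\le\lambda_0 R^2$.

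To obtain the first bound \eqref{eq:Amines estimate}, I would apply a discrete comparison principle: a supersolution of the form $h(y)=C\lambda/R^2$ (a constant) matches the boundary data and dominates the nonlinear right-hand side as long as $\lambda$ (hence $g$) is small enough. This yields $F_\lambda(y)-1\le C\lambda/R^2$, which after using $1+x\le e^x$ gives \eqref{eq:Amines estimate}. For the conditional estimate \eqref{eq:Amines estimate conditional}, write
\[
\bE\bigl[e^{\lambda\ell^y_{B_R}(\partial B_R)/R^2}\bigm|\ell^y_{B_R}(\partial B_R)>0\bigr]
\le 1+\frac{F_\lambda(y)-1}{\bP_y(\ell^y_{B_R}(\partial B_R)>0)},
\]
and combine the bound on the numerator with the lower estimate $\bP_y(\ell^y_{B_R}(\partial B_R)>0)\ge c/R^2$ from Proposition~\ref{le:distance}; the ratio is bounded by $C\lambda$, which gives~\eqref{eq:Amines estimate conditional}.

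The centered estimate \eqref{eq:Amine_centered} is by far the most delicate, since the bare conditional bound in Step~4 is off by a factor of order $\lambda$ at small $\lambda$. The plan is to set $\mu(y):=\bE[\ell^y_{B_R}(\partial B_R)]$, observe that $\mu$ is discrete-harmonic in $B_R$ with boundary data $1$ (by the many-to-one identity for critical BRW), and analyze $G_\lambda(y):=e^{-\lambda\mu(y)/R^2}F_\lambda(y)$, which satisfies a modified recursion in which the linear-in-$\lambda$ contribution is cancelled out. A second-order expansion then produces a source term of order $\lambda^2/R^4$ times a bounded harmonic quantity, and the comparison principle applied to this subtracted equation yields $G_\lambda(y)\le\exp(c\lambda^2/R^4)\cdot O(1)$; dividing by $\bP_y(\ell>0)\ge c/R^2$ as in Step~4 gives the stated bound $e^{c\lambda^2/R^2}$. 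The main obstacle is controlling the cubic and higher-order remainders in the Taylor expansion uniformly in $y$ and $R$: these scale like $\overline g^3$, and one must show that the nonlinear iterate does not amplify them beyond the harmonic-extension scale. Equivalently, this amounts to a sharp conditional second-moment estimate, $\mathrm{Var}(\ell^y_{B_R}(\partial B_R)\mid\ell^y_{B_R}(\partial B_R)>0)\lesssim R^2$ (one factor of $R^2$ better than the naive bound from Yaglom-type considerations), which is the genuine structural input needed to separate the centered behaviour from the unconditional one.
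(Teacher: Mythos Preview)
This proposition is not proved in the paper; it is quoted from \cite{asselah2022time}, so there is no in-paper argument to compare against. I will therefore assess your sketch on its own merits.

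Your recursion $F_\lambda(y)=\phi\bigl(\overline{F_\lambda}(y)\bigr)$ and the rewriting as a semilinear equation for $g=F_\lambda-1$ are correct and are the natural starting point. The gap is in the comparison step for \eqref{eq:Amines estimate}. The nonlinearity goes the wrong way: since $\phi$ is convex with $\phi'(1)=1$, for $\bar g\ge 0$ one has
\[
g(y)=\bar g(y)+\tfrac{\sigma^2}{2}\bar g(y)^2+\dots\ \ge\ \bar g(y),
\]
so $g$ is \emph{superharmonic}. A positive constant $h\equiv M$ satisfies $\bar h+\tfrac{\sigma^2}{2}\bar h^2=M+\tfrac{\sigma^2}{2}M^2>M=h$, i.e.\ it is a \emph{subsolution}, not a supersolution. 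More fundamentally, for this sign of nonlinearity the maximum-principle comparison you invoke simply fails: at a putative interior maximum of $g-h$ one obtains $(g-h)(y^*)\le(\bar g-\bar h)(y^*)\bigl[1+\tfrac{\sigma^2}{2}(\bar g+\bar h)\bigr]$, which yields no contradiction. So the step ``a constant supersolution $C\lambda/R^2$ works'' is incorrect, and no small modification (choosing a larger constant, or iterating the map $g\mapsto\phi(1+\bar g)-1$, which is not contractive since $\phi'(1)=1$) repairs it.

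What is actually needed to get \eqref{eq:Amines estimate} is a genuine a priori bound preventing blow-up of $F_\lambda$ for small $\lambda$; in \cite{asselah2022time} this comes from a multiscale argument through nested spheres, and an alternative would be a spatially varying barrier of size roughly $C/(R-\|y\|)^2$ near $\partial B_R$, matched to the survival probability. Your reduction of \eqref{eq:Amines estimate conditional} to \eqref{eq:Amines estimate} via the survival lower bound from Proposition~\ref{le:distance} is fine. For \eqref{eq:Amine_centered}, note that $\mu(y)\equiv 1$ by many-to-one, so the ``harmonic subtraction'' is trivial at the level of $F_\lambda$; the content is really a variance bound of the correct order, which again requires the substantive input from \cite{asselah2022time} and does not fall out of the equation for $G_\lambda$ alone. (As a side remark: the application of \eqref{eq:Amine_centered} in \eqref{eq:step11} uses the \emph{unconditional} centered bound; the conditional version as written cannot hold uniformly in $y$, since for $y$ near the origin $\E[\bar\ell\mid\ell>0]\asymp R^2$ forces a linear-in-$\lambda$ term on the left-hand side.)
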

Thus, on the event that the BRW survives to distance $R$,  the number of particles
reaching the boundary of $B_R$  is of order $\Theta (R^2)$. 
As a direct corollary of this result we get the following. 

\begin{Corollary}\label{cor:conditioned}
For any $d\geq 1$ there exists $\bar \beta (d) \in (0, \infty ) $ such that the following holds. For any $\alpha > 0$ and any $\beta \geq \bar \beta (d) $ there exists a positive constant $c = c(\alpha , \beta ,d)$ such that for all $R>0 $ 
	\[ \P \big( \ell^0_{B_R} ( \partial B_R )  \in [\alpha R^2 , \beta R^2 ]
	\big| \ell^0_{B_R} ( \partial B_R ) >0 \big) \geq c(\alpha , \beta ,d) >0 . \]
\end{Corollary}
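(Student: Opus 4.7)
The plan is to derive the corollary from the exponential moment estimates of Proposition~\ref{prop:largedeviation} via a Paley--Zygmund second-moment argument for the lower tail and a Markov inequality for the upper tail, both applied conditionally on the survival event $\{\ell^0_{B_R}(\partial B_R)>0\}$. Throughout I shall write $\ell := \ell^0_{B_R}(\partial B_R)$.

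The first step is to obtain matching upper and lower bounds on the conditional mean. The many-to-one lemma for critical Bienaym\'e--Galton--Watson trees reduces the first moment to a simple random walk computation: writing $X$ for a simple random walk started at the origin,
\[
\E[\ell] \;=\; \sum_{n\geq 0}\P\big(X_n\in \partial B_R,\; X_0,\dots,X_{n-1}\in B_R\big) \;=\; 1,
\]
since $X$ a.s.\ exits the bounded set $B_R$ in finite time. Combined with $\P(\ell>0)\asymp R^{-2}$ from Proposition~\ref{le:distance}, this produces
\[
c_1 R^2 \;\leq\; \E[\ell\mid \ell>0] \;\leq\; C_1 R^2
\]
for constants $c_1,C_1$ depending only on $d$.

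The second step is the Paley--Zygmund argument. A Taylor expansion of \eqref{eq:Amines estimate conditional} at $\lambda=\lambda_0$ bounds the conditional second moment via
\[
\frac{\lambda_0^2}{2R^4}\,\E[\ell^2\mid \ell>0] \;\leq\; \E\!\big[e^{\lambda_0 \ell/R^2}\,\big|\,\ell>0\big]-1 \;\leq\; e^{c\lambda_0}-1,
\]
so $\E[\ell^2\mid\ell>0]\leq C_2 R^4$. Applying Paley--Zygmund conditionally on $\{\ell>0\}$ then yields, for every $\theta\in(0,1)$,
\[
\P\big(\ell\geq \theta\,\E[\ell\mid\ell>0]\,\big|\,\ell>0\big) \;\geq\; (1-\theta)^2\,\frac{\E[\ell\mid\ell>0]^2}{\E[\ell^2\mid\ell>0]} \;\geq\; (1-\theta)^2 \frac{c_1^2}{C_2}.
\]
Given $\alpha>0$, setting $\theta := \min(\alpha/c_1,\,1/2)$ guarantees $\theta\,\E[\ell\mid\ell>0]\geq \alpha R^2$, producing a uniform-in-$R$ lower bound $\P(\ell\geq \alpha R^2 \mid \ell>0)\geq c_3(\alpha,d)>0$.

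Finally, Markov's inequality applied to \eqref{eq:Amines estimate conditional} gives the matching upper tail
\[
\P(\ell > \beta R^2\mid \ell>0) \;\leq\; e^{c\lambda_0-\lambda_0\beta}.
\]
Choosing $\bar\beta(d)$ large enough that the right-hand side is at most $c_3(\alpha,d)/2$ whenever $\beta\geq \bar\beta(d)$, and subtracting the two bounds, yields the corollary with $c(\alpha,\beta,d) := c_3(\alpha,d)/2$. The delicate step is the many-to-one identity $\E[\ell]=1$, which is where the critical assumption on $\nu$ enters essentially; once this scaling is pinned down, the Paley--Zygmund scheme is entirely routine.
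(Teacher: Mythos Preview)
Your approach is essentially the same as the paper's: conditional Paley--Zygmund for the lower tail plus an exponential Markov bound for the upper tail. One pleasant difference is that you derive the conditional first and second moment bounds self-containedly (via many-to-one giving $\E[\ell]=1$, and the elementary inequality $e^x\ge 1+x+x^2/2$ applied to \eqref{eq:Amines estimate conditional}), whereas the paper simply cites these bounds from \cite{asselah2022time}.

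There is, however, a slip in your parameter choice. The claim that $\theta := \min(\alpha/c_1,\,1/2)$ guarantees $\theta\,\E[\ell\mid\ell>0]\ge \alpha R^2$ is only valid when $\alpha\le c_1/2$: for larger $\alpha$ you have $\theta=1/2$, and from $\E[\ell\mid\ell>0]\ge c_1 R^2$ you can only conclude $\theta\,\E[\ell\mid\ell>0]\ge (c_1/2)R^2<\alpha R^2$, so the inclusion $\{\ell\ge \theta\,\E[\ell\mid\ell>0]\}\subseteq\{\ell\ge\alpha R^2\}$ fails. Relatedly, your threshold $\bar\beta$ is chosen so that $e^{c\lambda_0-\lambda_0\beta}\le c_3(\alpha,d)/2$, which makes $\bar\beta$ depend on $\alpha$, contrary to the quantifier order in the statement. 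The paper's own proof has the analogous limitation (its use of the \emph{upper} bound $\E[\ell_R\mid\ell_R>0]\le C_3 R^2$ in the Paley--Zygmund step points the wrong way, and its $\bar\vartheta(\alpha)$ yields $C_3\bar\vartheta\le\alpha/2$ rather than $\alpha$). Since the corollary is only ever applied downstream with a small freely chosen $\alpha$, these issues are immaterial to the paper's applications, but strictly speaking the Paley--Zygmund scheme as written only covers $\alpha$ below a dimensional constant.
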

\begin{proof}
Write $\ell_R $ in place of $\ell^0_{B_R} ( \partial B_R ) $ for brevity. We use the conditional version of the Paley-Zygmund inequality: for any non-negative square-integrable random variable $X$, any event $E$ with $\P (E)>0$ and any $\vartheta \in [0,1]$,  it holds 
	\[ \P \big( X \geq \vartheta \E [X|E] |E \big) \geq (1-\vartheta )^2 \frac{(\E[X|E])^2}{\E [X^2 |E ]} . \]
Taking $X = \ell_R $ and $E = \{ \ell_R >0\}$ the above inequality rewrites as 
	\[ \P \big( \ell_R \geq \vartheta  \cdot \E[\ell_R | \ell_R >0 ] \, \big| \ell_R >0 \big) 
	\geq ( 1- \vartheta )^2 \frac{ (\E[\ell_R | \ell_R >0 ])^2 }{\E [ \ell_R^2 | \ell_R >0 ] } .  
	\]
Now by Lemma 3.8 of \cite{asselah2022time} we know that there exist constants $C_1 , C_2 \in (0,\infty ) $, depending only on the dimension $d$,  such that 
	\[ \E [ \ell_R | \ell_R >0 ] \geq C_1 R^2 , 
	\qquad 
	\E [ \ell_R^2 | \ell_R >0 ] \leq C_2 R^4 , 
	\]
which gives 
	\[  \P \big( \ell_R \geq \vartheta \E [\ell_R | \ell_R >0 ] \, \big| \ell_R >0 \big) 
	\geq
	(1 - \vartheta )^2 \frac{C_1^2}{C_2 } . \]
Since, moreover, 
	\[  \E [ \ell_R | \ell_R >0 ] \leq C_3 R^2 , \]
we conclude that 
	\[ \P \big( \ell_R \geq C_3 \vartheta R^2 \big| \ell_R >0 \big) 
	\geq (1 - \vartheta )^2 \frac{C_1^2}{C_2 } \]
for any $\vartheta \in [0,1]$. Take $\vartheta = \bar \vartheta (\alpha ) :=  \alpha / (2 \max \{ C_3 , \alpha \} ) \in (0,1/2]$ to get that 
	\[ \P \big( \ell_R \geq \alpha  R^2 \big| \ell_R >0 \big) 
	\geq (1 - \bar \vartheta (\alpha )  )^2 \frac{C_1^2}{C_2 } >0 . \]
It remains to show that $\P ( \ell_R \geq \beta R^2 | \ell_R >0 ) $ can be made arbitrarily small by taking $\beta$ large enough. Indeed, recall that by Proposition  \ref{prop:largedeviation} we have that for any $d\geq 1$ there exist constants $c=c(d) ,\bar \lambda = \bar \lambda (d)>0$ such that for any $\lambda \in (0, \bar \lambda )$ it holds that  
	\[ \E [ e^{\lambda \ell_R / R^2} | \ell_R >0 ] \leq e^{c\lambda } . 
	\]
Taking $\lambda >0$ as above we thus find 
	\[ \begin{split} 
	\P ( \ell_R \geq \beta R^2 | \ell_R >0 ) 
	= \E [ e^{\lambda \ell_R / R^2} | \ell_R >0 ] \cdot e^{-\lambda \beta } 
	\leq e^{- \lambda ( \beta - c ) } ,  
	\end{split} 
	\]
which can be made arbitrarily small by taking $\beta $ large enough. 
\end{proof}

\subsection{Green's function estimates}
\label{sec:Green}
For $x \in \mathbb{Z}^d$,  denote by $(S^x_n )_{n\in \mathbb N}$ a simple random walk on $\mathbb{Z}^d$ starting from  $S^x_0=x$. 
Given a finite set $K  \subset \mathbb{Z}^d$, let 
	\[ H_K(S^x)  := \inf\{ t\geq 0    :  \,  S^x_t\in K\} \]
denote the hitting time of $K$, and write simply $H_K$ when no confusion occurs on the random walk.
For any $x,y \in \mathbb{Z}^d$, define the Green's function by 
$$
G_K(x,y):= \bE \left [ \sum\limits_{n = 0}^{H_{K^c} } \1 (S^x_n = y ) \right],
$$
where $K^c := \mathbb{Z}^d\bs K$.
Recall that $B_R$ denotes the ball of radius $R$ centered at the origin. We use the notation $G_R(x,y)$ in place of  $G_{B_R}(x,y)$.
Note that, with our definition, when $y\in \partial B_R$
we have that $G_R(x,y) =\bP\big( S^x(H_{\partial B_R})=y\big)$.

We start by recalling a classical bound of Asselah and Gaudilli\`ere. 
\begin{Lemma}[\cite{asselah2013logarithmic}, Lemma 5.1]\label{lem-AG}
Assume $d\ge 2$. Then there is an absolute constant $\kappa$ such that  for any $x\in B_R$ and $y\in \partial B_R$ it holds that
\begin{equation}\label{AG-hitting}
\bP\big( S^x(H_{\partial B_R})=y \big)\le \kappa \cdot \frac{1}{\|x-y\|^{d-1}}.
\end{equation}
\end{Lemma}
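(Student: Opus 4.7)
My plan is to combine a last-step decomposition with a boundary estimate for the killed Green's function. As a first reduction, since the walk is nearest-neighbour, any path from $x$ exiting $B_R$ at $y\in\partial B_R$ is at some interior neighbour $z\sim y$, $z\in B_R$, immediately before the exit step. Conditioning on this penultimate location and applying the Markov property yields
\[
G_R(x,y) \;=\; \bP\bigl(S^x(H_{\partial B_R})=y\bigr) \;=\; \frac{1}{2d}\sum_{\substack{z\sim y\\ z\in B_R}} G_R(x,z).
\]
The sum has at most $2d$ terms, and $\|x-z\|\ge \|x-y\|-1$ for each of them, so the lemma reduces to the pointwise bound $G_R(x,z)\le C/\|x-z\|^{d-1}$ for $z\in B_R$ at lattice distance $1$ from $\partial B_R$.

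This boundary Green's function estimate is the heart of the matter. The naive interior bound $G_R(x,z)\le G(x,z)\lesssim \|x-z\|^{2-d}$ (and an analogous logarithmic bound in $d=2$) is one full power of $\|x-z\|$ too weak; the missing factor has to come from the Dirichlet condition $G_R(x,\cdot)=0$ on $B_R^{c}$. I would extract it by writing $G_R(x,z)$ as a boundary integral of the free Green's function in $d\ge 3$ (respectively the potential kernel $a$ in $d=2$), namely as $\bigl|\,\bE_x[a(S^x(H_{\partial B_R}),z)]-a(x,z)\,\bigr|$, and then applying a reflection trick: let $z^{*}$ be a lattice point just across $\partial B_R$ at distance $1$ from $z$; since $z^{*}\notin B_R$, $a(\cdot,z^{*})$ is harmonic on $B_R$, so optional stopping gives $\bE_x[a(S^x(H_{\partial B_R}),z^{*})]=a(x,z^{*})$. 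Subtracting this harmonic identity reshapes $G_R(x,z)$ into a combination of differences $a(\cdot,z)-a(\cdot,z^{*})$ only, which by classical discrete gradient estimates satisfy $|a(w,z)-a(w,z^{*})|\le C/\|w-z\|^{d-1}$. Since both $x$ and $S^x(H_{\partial B_R})$ are at distance $\gtrsim \|x-z\|$ from $z$ off a negligible event (controlled by Markov), this yields the required bound.

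The main obstacle is exactly the boundary estimate in the second step: gaining a full power of $\|x-z\|$ over the interior bound forces one to exploit the vanishing of $G_R$ on $B_R^{c}$ rather than rely on any interior potential-theoretic input. The reflection/gradient route above is one clean option; an alternative is a Harnack-type argument, intercepting the walk at the sphere $\partial B(y,\|x-y\|/2)$ and using that the harmonic measure on the patch $\partial B_R\cap B(y,\|x-y\|/2)$, a set of $\Theta(\|x-y\|^{d-1})$ lattice points, is roughly uniform there, so that a single boundary point carries at most $1/\|x-y\|^{d-1}$ of the mass. Dimension two requires a little extra care because the potential kernel is logarithmic, but the decisive gradient bound $|a(x,z)-a(x,z^{*})|\le C/\|x-z\|$ remains classical and the argument goes through unchanged.
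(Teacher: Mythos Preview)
The paper does not prove this lemma; it is quoted from \cite{asselah2013logarithmic} and used as a black box, so there is no in-paper argument to compare against. The closest thing is the proof of Lemma~\ref{lem-rasemote} (the $d\ge 3$ extension allowing $y$ anywhere in $B_R\cup\partial B_R$), and that argument proceeds not via reflection but via a gambler's-ruin escape estimate: a walk started within $O(1)$ of $\partial B_R$ reaches distance $r$ into the ball only with probability $O(1/r)$, and combining this factor with the unrestricted hitting bound $C\|x-y\|^{2-d}$ supplies exactly the missing power of $\|x-y\|$.

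Your reflection route has a real gap at the step you label ``off a negligible event (controlled by Markov)''. After subtracting the harmonic function $a(\cdot,z^{*})$ you are left with $\bE_x\big[a(S_\tau,z)-a(S_\tau,z^{*})\big]$, $\tau=H_{\partial B_R}$, and the gradient bound gives only $|a(w,z)-a(w,z^{*})|\le C\|w-z\|^{1-d}$. Controlling this expectation therefore requires control of the exit law of $S_\tau$ near $z$, but the only a~priori input you have is the unrestricted bound $\bP_x(S_\tau=w)\le C\|x-w\|^{2-d}$ (trivially $\le 1$ when $d=2$), which is one full power short. Summing $\|x-z\|^{2-d}\cdot\|w-z\|^{1-d}$ over $w$ in a cap of $\partial B_R$ of radius $\tfrac12\|x-z\|$ around $z$ produces a $\log\|x-z\|$ divergence rather than a bounded contribution, so the reflection identity alone does not close the loop. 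You need an independent escape-probability or boundary-Harnack input; and once you import the gambler's-ruin factor (as in the paper's proof of Lemma~\ref{lem-rasemote}) the reflection step becomes redundant, since applying that factor directly to $G_R(z,x)=G_R(x,z)$ with $z$ at distance $O(1)$ from $\partial B_R$ already yields $G_R(x,z)\le C\|x-z\|^{1-d}$.
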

In dimension $d \geq 3$ we need the following natural and crucial generalisation, which allows us to take $y$ anywhere inside $B_R$. Write $a \wedge b $ for the minimum between $a$ and $b$. 
\begin{Lemma}\label{lem-rasemote} Assume that $d \geq 3$. 
For arbitrary  $R>0$ let  $x\in B_R$ and $y\in B_R \cup\partial B_R$. Then there is an absolute constant $\kappa$ such that 
\begin{equation}\label{ineq-rasemote}
\bP\big( H_y(S^x)\le H_{\partial B_R} (S^x) \big)\le \kappa \cdot \frac{1}{\|x-y\|^{d-2}}
 \cdot \Big(1\wedge \frac{(R-\|y\|+1)}{\|x-y\|}\Big)\cdot \Big(1\wedge \frac{(R-\|x\|+1)}{\|x-y\|}\Big).
\end{equation}
\end{Lemma}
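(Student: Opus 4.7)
The plan is to reduce the hitting probability to a bound on the killed Green's function $G_R$ and then establish that bound by a two-step path decomposition which yields the two boundary-correction factors separately.

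First, applying the strong Markov property at $H_y$ on the event $\{H_y\leq H_{\partial B_R}\}$ gives the standard identity
\[
G_R(x,y)=\bP_x\!\big(H_y\leq H_{\partial B_R}\big)\cdot G_R(y,y).
\]
Since $G_R(y,y)\geq 1$ for every $y\in B_R$ (the contribution of $n=0$ alone equals $1$), it suffices to prove the stated upper bound for $G_R(x,y)$ itself.

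Set $r:=\|x-y\|$, $d_x:=R-\|x\|+1$, $d_y:=R-\|y\|+1$, and denote by $B(y,\rho):=\{z\in\mathbb{Z}^d:\|z-y\|<\rho\}$ the Euclidean ball of radius $\rho$ around $y$. Fix $\rho:=r/4$. Any trajectory from $x$ that visits $y$ before exiting $B_R$ must enter $B(y,\rho)$ beforehand, so the strong Markov property at the first entrance time yields
\[
G_R(x,y)\;\leq\;\bP_x\!\big(H_{B(y,\rho)}\leq H_{\partial B_R}\big)\cdot\sup_{\|z-y\|\leq\rho}G_R(z,y).
\]
For the first factor I would establish
\[
\bP_x\!\big(H_{B(y,\rho)}\leq H_{\partial B_R}\big)\leq C\,\Big(1\wedge\frac{d_x}{r}\Big),
\]
using Lemma~\ref{lem-AG} to control the harmonic measure on $\partial B_R$ seen from $x$ together with a one-dimensional gambler's-ruin comparison along the radial direction: when $x$ sits at distance $d_x$ from $\partial B_R$, the chance of travelling a distance of order $r$ inside $B_R$ before exiting decays like $d_x/r$. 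For the second factor, $z$ is at distance $\asymp r$ from $y$, so the free-space bound $G_R(z,y)\leq G_{\mathbb{Z}^d}(z,y)\leq C/\|z-y\|^{d-2}$ produces the $1/r^{d-2}$ term; to extract the analogous correction $1\wedge d_y/r$, I would iterate the decomposition using reversibility $G_R(z,y)=G_R(y,z)=\bP_y(H_z\leq H_{\partial B_R})\cdot G_R(z,z)$ and reapply the harmonic-measure argument, this time with $y$ as the starting point.

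The main technical obstacle is the quantitative escape bound $\bP_x(H_{B(y,\rho)}\leq H_{\partial B_R})\leq C\,d_x/r$. For a half-space boundary this reduces to a classical one-dimensional gambler's ruin, but for the curved sphere $\partial B_R$ one must verify that at scale $r\leq R$ the curvature contributes only a bounded multiplicative correction. Concretely, I would obtain this by summing the harmonic measure bound of Lemma~\ref{lem-AG}, namely $\bP_x(S_{H_{\partial B_R}}=z)\leq\kappa/\|x-z\|^{d-1}$, over the portion of $\partial B_R$ lying within distance of order $r$ from $x$, and deducing that the complementary probability of reaching the inner shell at distance $\rho$ from $y$ is at most $C\,d_x/r$; the remaining ingredients (strong Markov, reversibility, and the free Green bound) are then standard.
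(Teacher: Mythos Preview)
Your overall strategy---reducing to a bound on $G_R(x,y)$, then extracting the two correction factors by a strong Markov decomposition combined with reversibility---is essentially the paper's approach as well. The paper, too, splits the path from $x$ to $y$ into an escape piece near $x$ (yielding $(R-\|x\|+1)/r$), a transit piece (yielding $1/r^{d-2}$), and a reversed escape piece near $y$ (yielding $(R-\|y\|+1)/r$).

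The gap is in your justification of the escape bound
\[
\bP_x\!\big(H_{B(y,\rho)}\leq H_{\partial B_R}\big)\le C\,\frac{d_x}{r}.
\]
Lemma~\ref{lem-AG} controls only the exit distribution on $\partial B_R$, not the probability of visiting an interior target before exit. Summing $\kappa/\|x-z\|^{d-1}$ over boundary points $z\in\partial B_R$ within distance $r$ of $x$ produces a quantity of order $\log(r/d_x)$ or $O(1)$, not $d_x/r$; and even if the walk first reaches $B(y,\rho)$, it may subsequently return and exit $\partial B_R$ close to $x$, so the ``complementary probability'' reasoning you sketch does not go through.

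The paper handles this step differently. It places a ball $x'+B_{2r}$ (with $r=\|x-y\|/16$) entirely \emph{outside} $B_R$, tangent near $x$, so that $x$ lies at distance about $d_x$ from its boundary. Any path from $x$ to $y$ that stays in $B_R$ must avoid $x'+B_{2r}$ and hence must first cross $\partial(x'+B_{4r})$; the classical annulus escape estimate (harmonicity of $\|z-x'\|^{2-d}$) then gives
\[
\bP_x\!\big(H_{\partial(x'+B_{4r})}<H_{x'+B_{2r}}\big)\le C\,\frac{d_x}{r}.
\]
An identical construction with a ball $y'+B_{2r}$ near $y$, combined with reversibility of the walk, yields the second factor $d_y/r$. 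If you replace your proposed use of Lemma~\ref{lem-AG} with this tangent-exterior-ball construction, the remainder of your outline goes through.
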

Since we have not found a reference for this result  in the extensive literature on random walks, we choose to include the proof. 
\begin{proof} 
Define $R_x:=R-\|x\|$ and $R_y:=R-\|y\|$. 
Assume first that $\max(R_x,R_y)\le r$ where we set $r:=\|x-y\| /16$. 
In other words, the points $x,y$ are both much closer
to the boundary than to each other. In this case, we show that two simple random walks starting from $x$ and $y$ respectively must first get away from the
boundary by a distance of order $\|x-y\|$ before they can meet. 
To see this, define two points $x',y'\in \mathbb Z^d$ outside $B_R$ as follows.
 Take $x'$ at a distance $1$ from the line $\mathbb R\cdot x$ and such that $x'\in \partial B_{R+2r}$ and 
$B_R\cap (x'+B_{2r})=\emptyset$. The same construction holds for $y'$ when 
the role of $x$ and $y$ are exchanged.
Then $x$ is a distance less than $R_x+1$ from $x'+B_{2r}$, 
$y$ is a distance less than $R_y+1$ from $y'+B_{2r}$,  and 
\begin{equation}\label{opt-green1}
\|x'-y'\|\ge \|x-y\|-r=15r.
\end{equation}
The point of this construction is that a path from $x$ to $y$ avoiding the complement of $B_R$ must first reach $B_1:=x'+B_{4r}$ before $x'+B_{2r}$, then reach $B_2:=y'+B_{4r}$,  
then go from $\partial B_2$ to $y$ without hitting $y'+B_{2r}$. 
\begin{figure}[!!ht]
    \begin{center}
     \centering
  \mbox{\hbox{
  \hspace{7mm} \includegraphics[width=.55\textwidth]{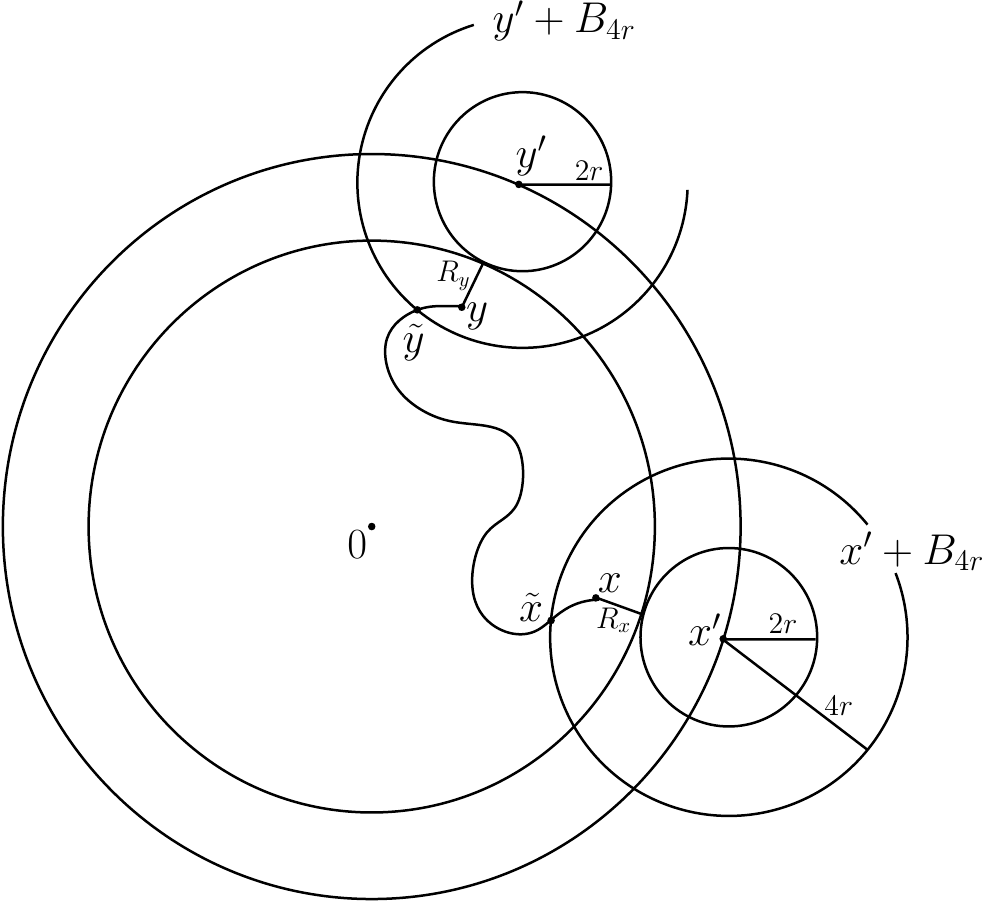}}}
       \end{center}
 \caption{A depiction of the construction used in the proof.} \label{fig:LemmaGreen}
 \end{figure}

Thus, 
	\[ \{H_y(S^x)<  H_{\partial B_R} (S^x) \big\}
	\subseteq  \{H_y(S^x)<  \min(H_{x'+B_{2r}},H_{y'+B_{2r}})\} , \]
where we have suppressed the random walk from the notation in the right hand side for brevity. 
Now if we call $\tilde x$ the
first site the walk $S^x$ hits on $\partial B_1$, and $\tilde y$ the last site it hits on
$\partial B_2$ before $y$, we can  partition the event
$\{H_y(S^x)<   \min(H_{x'+B_{2r}},H_{y'+B_{2r}})\big\}$ according to the values of $\tilde x $ and $\tilde y$, to get that its probability is bounded by
\begin{equation}\label{opt-green2}
\begin{split}
 \sum_{\tilde x,\tilde y} &
\bP\big( S^x(H_{\partial B_1})=\tilde x, H_{\partial B_1}<H_{x'+B_{2r}}\big)\cdot \bP\big( H_{\tilde y} (S^{\tilde x})<\infty\big)
\cdot \bP\big( S^y(H_{\partial B_2})=\tilde y, H_{\partial B_2}<H_{y'+B_{2r}}\big)\\
 & \leq \, \bP\big( H_{\partial B_1} (S^x)<H_{x'+B_{2r}}\big)\cdot 
\sup_{\tilde x \in  \partial B_1 , \, \tilde y \in \partial B_2}
\bP\big( H_{\tilde y} (S^{\tilde x})<\infty\big)\cdot 
\bP\big( H_{\partial B_2} (S^{y})<H_{y'+B_{2r}}\big).
\end{split}
\end{equation}
Here we used the strong Markov property and the reversibility of the random walk to obtain the second
line in \reff{opt-green2}.
Now, by a Gambler's ruin estimate, there exists $\kappa>1$ such that
\begin{equation}\label{opt-green3}
\begin{split} 
& \bP\big( H_{\partial B_1} (S^x)<H_{x'+B_{2r}}\big)\le 
\kappa\cdot \frac{R_x+1}{\|x-y\|},\quad\text{and}\\ & 
\bP\big( H_{\partial B_2} (S^y)<H_{y'+B_{2r}}\big)\le \kappa\cdot \frac{R_y+1}{\|x-y\|} . 
\end{split} 
\end{equation}
Combining this with  the classical estimate $\bP\big( H_{\tilde y} (S^{\tilde x})<\infty\big)\le 
\kappa\cdot \|\tilde x-\tilde y\|^{2-d}$ and the fact that $\|\tilde x-\tilde y\|> 7r$, we gather that  for some constant $C>0$
\begin{equation}\label{opt-green4}
\bP\big( H_y(S^x)<H_{\partial B_R} (S^x) \big)\le C  \frac{R_x+1}{\|x-y\|}\cdot \frac{R_y+1}{\|x-y\|}\cdot \frac{1}{ \|x-y\|^{d-2}}.
\end{equation}
This proves \eqref{ineq-rasemote} in the case $\max (R_x, R_y) \leq r$. 
It remains to treat the other cases. 
If  $R_x\le \|x-y\|/16 < R_y$, the only difference with the previous construction
is that once we reach $\partial B_1$, at $\tilde x$, we only require the walk to hit $y$, and this yields
the result. The case  $R_y\le \|x-y\| /16 < R_x$ is handled analogously,  exchanging the role of $x$ and $y$.
Finally, if $\min(R_x,R_y)> \|x-y\| /16$, then this means 
that $x$ and $y$ are in the bulk of $B_R$, and
the unrestricted bound holds.
\end{proof} 
In the remainder of this section we focus on obtaining  upper bounds on the second moment of the local time of branching random walks restricted to a  Euclidean ball. 
Recall from Section \ref{sec:BRW} that if $\cT$ is a critical Galton-Watson tree indexing a BRW,  for $R>0$ and $x\in B_R$ we denote by $\cT^x(B_R)$ the set of vertices of $\cT$ whose ancestors are all mapped to $B_R$, when the root is mapped to $x$. 
The vertices of $\cT^x(B_R)$ mapped to $\partial B_R$ are the pioneers.
When there is no confusion, we denote by $\cT^x(B_R)$ both the set of vertices and their positions in $\mathbb{Z}^d$, with a slight abuse of notation. Recall that the local time at site $z\in B_R$ of a BRW $( S^x_u )_{u \in \cT^x (B_R)} $ starting at $x$ is defined by 
\[
\ell^x_{B_R}(z)=\sum_{u\in \cT^x(B_R)} \1  ( S^x_u=z ), \]
and 
\[
G_R(x,z) =\E [\ell^x_{B_R}(z)].
\]
When $z\in \partial B_R$, the local time $\ell^x_{B_R}(z)$ counts the number of pioneers which exit $B_R$ at $z$, and 
$G_R(x,z) =\P\big(S^x(H_{\partial B_R})=z\big)$, so that  $\E[\ell^x_{B_R}(z)]=G_R(x,z).$

The following lemma bounds the second moment of the local time of a BRW restricted to $B_R$ in all dimensions. Recall that $\sigma^2$ denotes the variance of the offspring distribution, which is positive by Assumption \eqref{assumption}. 
\begin{Lemma}\label{lem-l2}
For any $d \geq 1$,  any $R>0$, $x\in B_R$ and $z \in \partial B_R$ it holds 
\begin{equation}\label{green-local2}
\E \big[ \big(\ell^x_{B_R}(z)\big)^2 \big] \leq  G_R(x,z)+  \sigma^2 \sum_{y\in B_R} G_R(x,y)\cdot G_R^2(y,z).
\end{equation}
\end{Lemma}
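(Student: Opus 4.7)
The plan is to derive a discrete Poisson-type recursion for $M(x,z) := \E\bigl[(\ell^x_{B_R}(z))^2\bigr]$ and to solve it via Green's representation, in direct analogy with the first-moment identity $(I-P)G_R(\cdot,z)\equiv 0$ on $B_R$ with boundary value $\1_{\{z\}}$ on $\partial B_R$.

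First, I perform a first-step analysis at the root of the driving Galton--Watson tree. Let $N\sim \nu$ be the number of children of the root and $\xi_1,\ldots,\xi_N$ their i.i.d.\ uniform nearest-neighbour increments. Since $x\in B_R$ and $z\in \partial B_R$ force $x\ne z$, the root does not contribute to $\ell^x_{B_R}(z)$, and I may decompose
$$\ell^x_{B_R}(z)\;=\;\sum_{i=1}^N \tilde\ell_i,$$
where, conditional on $(N,\xi_1,\ldots,\xi_N)$, the $\tilde\ell_i$ are mutually independent and each distributed as $\ell^{x+\xi_i}_{B_R}(z)$ (this reduces to $\1(x+\xi_i=z)$ when $x+\xi_i\in \partial B_R$, since a BRW rooted on $\partial B_R$ has only its root as a pioneer). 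Squaring and using independence together with the critical-branching identities $\E[N]=1$ and $\E[N(N-1)]=\sigma^2$ (both immediate from Assumption~\eqref{assumption}), I obtain
$$M(x,z)\;=\;P\,M(\cdot,z)(x)\;+\;\sigma^2\,\bigl(P\,G_R(\cdot,z)(x)\bigr)^2,\qquad x\in B_R,$$
where $P$ is the SRW transition operator on $\Z^d$.

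Next I simplify using the harmonicity of $G_R(\cdot,z)$ on $B_R$: since $z\in \partial B_R$, one has $P\,G_R(\cdot,z)(x) = G_R(x,z)$ for every $x\in B_R$. Hence $M(\cdot,z)$ solves the inhomogeneous discrete Poisson equation $(I-P)M(\cdot,z) = \sigma^2 G_R(\cdot,z)^2$ on $B_R$ with boundary condition $M(y,z)=\1(y=z)$ on $\partial B_R$. The discrete Green's representation then yields
$$M(x,z)\;=\;\sum_{y\in B_R}G_R(x,y)\,\sigma^2 G_R(y,z)^2\;+\;G_R(x,z),$$
which is the claimed inequality (in fact, with equality).

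The only delicate point is the first-step decomposition itself, and here the assumption $z\in \partial B_R$ is essential: it ensures that no ancestor of a pioneer at $z$ can itself sit at $z$, so that the pair-sum $\sum_{u\ne v}\1(S^x_u=S^x_v=z)$ splits cleanly through its most recent common ancestor into truly independent subtree contributions, producing the single $\sigma^2$ factor on the right-hand side rather than the additional ancestor/descendant cross-terms that would complicate the analogous statement for $z\in B_R$.
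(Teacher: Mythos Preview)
Your proof is correct and takes a different route from the paper's. The paper expands $(\ell^x_{B_R}(z))^2$ as a double sum over pairs of pioneers and decomposes the off-diagonal terms by their youngest common ancestor $v$: averaging over the position $S^x_v=y\in B_R$ and over the ordered pair of children of $v$ (contributing $\E[X(X-1)]=\sigma^2$) produces the term $\sigma^2 G_R(y,z)^2$ at each $y$. You instead perform a first-step analysis at the root, recognise $M(\cdot,z)$ as the solution of a discrete Poisson equation on $B_R$ with source $\sigma^2 G_R(\cdot,z)^2$ and boundary datum $\1_{\{z\}}$, and solve via Green's representation. Both arguments rest on the same two ingredients---criticality ($\E N=1$, $\E[N(N-1)]=\sigma^2$) and the harmonicity $PG_R(\cdot,z)=G_R(\cdot,z)$ on $B_R$ for $z\in\partial B_R$---and both in fact yield equality in \eqref{green-local2}; your phrasing makes this explicit, and the recursive viewpoint extends mechanically to higher moments. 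The only point you leave implicit is the a priori finiteness of $M(x,z)$, needed for uniqueness in the Poisson problem on $B_R$; this follows at once from a depth truncation together with monotone convergence, or from the exponential-moment bound on the number of pioneers in Proposition~\ref{prop:largedeviation}.
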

\begin{proof} 
As we expand the square, the second moment corresponds to summing over $u,u'\in  \cT^x$, and it is convenient to consider the youngest common ancestor, say $v$ of $u$ and $u'$ when they are different. Thus,
if $\cT^v$ is the subtree of $\cT^x (B_R)$ having $v$ as its root and including only the descendants of $v$
\begin{equation}\label{green-1}
\begin{split}
\big(\ell^x_{B_R}(z)\big)^2=&\ell^x_{B_R}(z)+\sum_{u\not= u'\in \cT^x(B_R)}\1 (S^u=z=S^{u'} )\\
=&\ell^x_{B_R}(z)+\sum_{v\in \cT^x(B_R)}\sum_{u\not= u'\in \cT^v}\1 ( S^u=z=S^{u'})\\
=  &\ell^x_{B_R}(z)+ \sum_{y\in B_R}  \sum_{v\in \cT^x(B_R)}\1 (S^v=y ) 
\sum_{u\not= u'\in \cT^v}\1 (S^u=z=S^{u'}) . 
\end{split}
\end{equation}
Now, the tree $\cT^v$ when $S^v=y$ has the same law as $\cT^y(B_R)$. 
Moreover,   if $X_v$ denotes the number of children of $v$, then  $\E [{X \choose 2} ] =  \sigma^2 / 2 <\infty$. Taking expectations throughout \eqref{green-1}  we obtain (\ref{green-local2}).
\end{proof}
\begin{Remark}
The same computation shows that if $z \in B_R$ then 
	\[ \E \big[ \big(\ell^x_{B_R}(z)\big)^2 \big] 
	\leq  G_R(x,z) \cdot ( 1 + 2G_R^+ (z,z) ) +  \sigma^2 \sum_{y\in B_R} G_R(x,y)\cdot G_R^2(y,z) , 
	\]
where $G_R^+ (z,z) $ counts the expected number of returns to $z$ before exiting $B_R$. 
\end{Remark}
We now obtain upper bounds for the right hand side of \eqref{green-local2}. 
Recall that
\[
\bP(H_y(S^x)<H_{\partial B_R} \big)=\frac{G_R(x,y)}{G_R(y,y)}.
\]
Moreover, 
$G_R(y,y)$ is uniformly bounded in the transient case, whereas in the recurrent case $G_R(y,y)$ can reach a value of order $\log R$, so extra care is needed.

We start by providing an estimate in the transient case. 
\begin{Lemma}\label{lem-2moment}
Assume $d \geq 3 $.
There are finite constants $\{C_d,d \geq 3\}$ such that for any $R>0$, $x\in B_R$, and $z\in \partial B_R$ it holds 
\begin{equation} \label{Cd}
\sum_{y\in B_R} G_R(x,y)G_R^2(y,z)\le  \frac{R-\|x\|+1}{\|x-z\|^{d}}
\times \begin{cases}
C_3\log \|x-z\| & \mbox{ if $d=3$,} \\
C_d & \mbox{ if $d \geq 4$.} 
\end{cases}
\end{equation}
\end{Lemma}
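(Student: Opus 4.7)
The natural approach is to substitute the pointwise Green's function estimates from Lemmas \ref{lem-AG} and \ref{lem-rasemote} and perform a dyadic decomposition of $B_R$ according to the two distances $\|x-y\|$ and $\|y-z\|$. Using the identity $G_R(x,y)=G_R(y,y)\cdot\bP_x(H_y\le H_{\partial B_R})$ together with the uniform bound $G_R(y,y)\le C$ (valid for $d\ge 3$) and Lemma~\ref{lem-rasemote}, one obtains
\[
G_R(x,y)\;\le\;\frac{C}{\|x-y\|^{d-2}}\cdot \min\!\Big(1,\tfrac{h_x}{\|x-y\|}\Big)\cdot \min\!\Big(1,\tfrac{h_y}{\|x-y\|}\Big),
\]
where I write $h_x:=R-\|x\|+1$ and $h_y:=R-\|y\|+1$, while Lemma~\ref{lem-AG} gives $G_R(y,z)\le C\|y-z\|^{-(d-1)}$ since $z\in\partial B_R$. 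Setting $r:=\|x-z\|$, I split the sum into three regions: (I) $\|x-y\|\le r/2$ (so $\|y-z\|\ge r/2$); (II) $\|y-z\|\le r/2$ (so $\|x-y\|\ge r/2$); and (III) both $\|x-y\|,\|y-z\|>r/2$.

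Region (II) is the critical one, as this is where the target decay $r^{-d}$ and the logarithmic correction in $d=3$ emerge. On this set $y$ is within $r/2$ of the boundary point $z$, so $h_y\le\|y-z\|+O(1)$; keeping both boundary factors in the estimate of $G_R(x,y)$ (and using $\|x-y\|\ge r/2$) yields $G_R(x,y)\le C h_x h_y/\|x-y\|^d\le Ch_x\|y-z\|/r^d$ (with the $h_x$-free variant $G_R(x,y)\le Ch_y/\|x-y\|^{d-1}$ when $h_x>r/2$). Combining this with $G_R(y,z)^2\le C\|y-z\|^{-2(d-1)}$ produces an integrand of size $Ch_x/(r^d\|y-z\|^{2d-3})$, whose dyadic sum over shells $\|y-z\|\sim 2^k$ with $0\le k\le\log(r/2)$ reduces to $\sum_k 2^{k(3-d)}$: $O(\log r)$ in $d=3$ and $O(1)$ in $d\ge 4$.

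Regions (I) and (III) will produce sub-dominant contributions. In (I) I bound $G_R(y,z)^2\le C/r^{2(d-1)}$ and reduce the problem to estimating $\sum_{\|x-y\|\le r/2}G_R(x,y)$, which is the expected time a simple random walk started at $x$ spends in the ball of radius $r/2$ around $x$ before leaving $B_R$; a dyadic computation, or a Gambler's-ruin type estimate, bounds this by $Cr\min(h_x,r)$, so the contribution is $\le Ch_x/r^{2d-3}\le Ch_x/r^d$ for $d\ge 3$. In (III), I use $G_R(x,y)\le Ch_x/\|x-y\|^{d-1}$ (the regime $\|x-y\|<h_x$ is handled separately by the interior bound $G_R(x,y)\le C/\|x-y\|^{d-2}$) and split the annulus $r/2\le\|x-y\|\le 4r$ from the tail $\|x-y\|>4r$ (where $\|y-z\|\asymp\|x-y\|$); both pieces produce a bound of order $Ch_x/r^{2d-3}$.

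The main obstacle is the fine analysis of Region (II): without exploiting the fact that $y$ is close to $\partial B_R$ whenever it is close to $z$, the naive estimate would only yield $r^{-(d-1)}$ decay. Extracting the additional factor $\|y-z\|/r$ from the boundary term $h_y/\|x-y\|$ in the bound on $G_R(x,y)$ is precisely what reduces the exponent in the denominator to $2d-3$, at which point the shell sum is marginally divergent in dimension three, producing the logarithmic correction in \eqref{Cd}. A secondary difficulty is keeping track of several case distinctions (bulk versus near-boundary positions of $x$, and the degenerate regime $\|x-y\|<h_x$), but these contribute only sub-dominant terms and do not affect the sharp bound.
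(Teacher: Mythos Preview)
Your proposal is correct and follows essentially the same route as the paper's proof. Both arguments plug in the pointwise bounds from Lemmas~\ref{lem-AG} and~\ref{lem-rasemote}, split $B_R$ into three regions according to the distances $\|x-y\|$ and $\|y-z\|$, and identify the region near $z$ as the one producing the $\log\|x-z\|$ in $d=3$. The key step is identical: when $y$ is close to $z$ one has $R-\|y\|+1\lesssim\|y-z\|$, and feeding this boundary factor into the rasemote bound for $G_R(x,y)$ upgrades the summand from order $\|y-z\|^{-2(d-2)}$ to $\|y-z\|^{-(2d-3)}$, which is exactly the marginal exponent in $d=3$.

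The only cosmetic difference is in the bookkeeping of the critical region. The paper parametrises by the pair $(h,k)$ with $h\approx R-\|y\|$ and $kh\approx\|y-z\|$, obtaining the double sum $\sum_h h^{-(d-2)}\sum_k k^{-d}$; you instead bound $h_y\le\|y-z\|+O(1)$ directly and run a single dyadic sum in $\|y-z\|$, arriving at $\sum_k 2^{k(3-d)}$. Both computations give the same answer. Your treatment of the far region (III) also absorbs what the paper calls Region~1, but the estimates there are sub-dominant in either organisation.
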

\begin{proof}
Since 
$ G_R(x,y)\le G_R (0,0)\cdot \P (H_y(S^x)<H_{\partial B_R})$, 
and $G_R(0,0)$ is uniformly bounded in the transient case, we have that $G_R(x,y)$ is uniformly bounded for all $x ,y\in B_R$. It thus suffices to bound 
$G_R(y,z)$ for arbitrary $y \in B_R$ and $z \in \partial B_R$. This can be achieved using  Lemma~\ref{lem-rasemote}. 

We split the sum over $y \in B_R$ into three regions such that $B_R = \mathcal{R}_1\cup \mathcal{R}_2 \cup  \mathcal{R}_3 $, defined as follows. 
\begin{itemize}
\item Region 1 corresponds to $y$ being far from 
$z$, i.e.\ 
$ \mathcal{R}_1 = \{y:\ \|y-z\|> 2\|x-z\|\} . $
\item Region 2 corresponds to 
$y$ being close to $x$, i.e.\ 
$ \mathcal{R}_2 = \Big\{ y:\ \|y-x\|< \frac{1}{2}\|x-z\|\Big\} . $
\item Region 3 corresponds to the remaining space in $B_R$, i.e.\  
$  \mathcal{R}_3 = B_R \setminus (\mathcal{R}_1\cup \mathcal{R}_2 ) $. 
\end{itemize}
Recall the notation $R_x=R-\|x\|\le \|x-z\|$.

In Region 1, we bound $G_R(x,y)$ by $\kappa \cdot R_x\cdot \|x-y\|$, 
and $G_R(y,z)$ by $c\|x-y\|^{1-d}$, for some finite constants $\kappa ,c$. This gives the bound 
 \[
 \sum_{y\in \mathcal{R}_1 } 
 \frac{R_x}{\|x-y\|^{d-1}} \cdot \frac{1}{\|x-y\|^{2d-2}}
 \le c \cdot   \frac{R_x}{\|x-z\|^{2d-3}}
 \le
 \frac{c}{\|x-z\|^{d-3}}\cdot \frac{R_x}{\|x-z\|^{d}}.
 \]
In Region 2, $G_R(y,z)=\mathcal{O} (\|x-z\|^{1-d})$, and the sum over $G_R(x,y)$ in this region
 is of order $R_x\cdot \|x-z\|$. 
 This gives the bound 
 \[
  \sum_{y\in \mathcal{R}_2 }  \frac{R_x}{\|x-y\|^{d-1}} \cdot \frac{1}{\|x-z\|^{2d-2}}
  \le c \cdot   \frac{R_x}{\|x-z\|^{d-1}} \cdot  \frac{1}{\|x-z\|^{d-2}}.
  \]
We are left with Region 3. For $h \leq 2\|x-z\| $ and $k \leq 2\|x-z\|/h$,  define  the shells 
$\Sigma^z_{h,k}:=(z+B_{(k+1)h}\bs B_{kh})\cap \{y:\ h\ge R_y >h-1\}$.
 Note that $|\Sigma^z_{h,k}|\le c h\cdot(kh)^{d-2}$. 
 Thus, since in this region $\|x-y\|\ge \frac{1}{2}\|x-z\|$, we end up with 
 \begin{equation}\label{green-10}
 \begin{split}
\sum_{y\in \mathcal{R}_3} G_R(x,y)G_R^2(y,z)
\le & \, c \sum_{h=1}^{2\|x-z\|}
\sum_{k=1}^{2\|x-z\|/h} \sum_{y\in \Sigma^z_{h,k}} \frac{h\cdot R_x}{\|x-y\|^{d}}
\cdot \frac{1}{\|y-z\|^{2(d-1)}}\\
\le & \, c \cdot \frac{R_x}{\|x-z\|^{d}}\sum_{h=1}^{2\|x-z\|}\frac{1}{h^{d-2}}\sum_{k=1}^{2\|x-z\|/h}  \frac{1}{k^d}
\\  & \leq c \cdot \frac{R_x}{\|x-z\|^{d}} \cdot 
\big( \log \|x-z\| \1 _{d=3} + \1_{d\geq 4} \big) . 
\end{split}
\end{equation}
Gathering the above estimates concludes the proof.
\end{proof}

Plugging this bound into \eqref{Cd}  we obtain the following corollary.
\begin{Corollary}\label{cor:localtimeboundaryball}
Let $d \geq 3$. 
There exists $c _d< \infty$, depending only on the dimension, 
such that for each large enough $R$ and any $x \in B_R$ we have that 
\begin{equation}\label{2dmoment-boundary}
\sum_{z\in \partial B_R} \sum_{y\in B_R} G_R(x,y)G_R^2(y,z) \leq  
c_d \times \begin{cases}
\log( R - \|x\|  )  &  \mbox{ if $d=3$,} \\
1  & \mbox{ if $d \geq 4$.} 
\end{cases}
\end{equation}
\end{Corollary}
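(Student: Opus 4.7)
The plan is to simply sum the per-$z$ bound of Lemma~\ref{lem-2moment} over $z \in \partial B_R$. Writing $R_x := R - \|x\|$, the lemma reduces the statement to bounding
\[
(R_x + 1) \sum_{z \in \partial B_R} \frac{L(\|x-z\|)}{\|x-z\|^d},
\]
with $L(r) = C_3 \log r$ if $d = 3$ and $L(r) = C_d$ if $d \geq 4$. Thus the corollary is a purely geometric estimate on sums of powers of distances from $x$ to points of the lattice sphere $\partial B_R$.

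The main ingredient I would establish is
\[
\sum_{z \in \partial B_R} \frac{1}{\|x-z\|^d} \;\leq\; \frac{C}{R_x} \qquad (*)
\]
for $d \geq 3$, $R$ large, and $R_x \geq 1$. I would prove $(*)$ by decomposing $\partial B_R$ into dyadic shells $D_k := \{z \in \partial B_R : 2^k R_x \leq \|x-z\| < 2^{k+1} R_x\}$, $k \geq 0$, which is valid since $\|x-z\| \geq R_x$ for every $z \in \partial B_R$. Using the identity $1 - \cos \theta = (\|x-z\|^2 - R_x^2)/(2R(R-R_x))$ between the half-angle $\theta$ of the associated spherical cap on $\partial B_R$ and the slant distance, one gets
\[
|D_k| \;\leq\; C \min\bigl\{R^{d-1},\; 2^{k(d-1)} R_x^{d-1} \bigl(R/(R-R_x)\bigr)^{(d-1)/2}\bigr\},
\]
and summing $|D_k|/(2^k R_x)^d$ over $k$ as a convergent geometric series yields $(*)$. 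The case $d \geq 4$ then follows at once, since $(R_x+1)\cdot C/R_x \leq 2C$ for $R_x \geq 1$.

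For $d = 3$ I would handle the logarithm via the splitting $\log\|x-z\| = \log R_x + \log(\|x-z\|/R_x)$ (valid because $\|x-z\| \geq R_x$). The $\log R_x$-piece contributes $\log R_x \cdot (R_x + 1) \cdot O(1/R_x) = O(\log R_x)$ directly from $(*)$. For the second piece, on $D_k$ one has $\log(\|x-z\|/R_x) \leq (k+1)\log 2$, and the same cap estimate combined with the convergent series $\sum_{k \geq 0} (k+1)\,2^{-k}$ yields $\sum_k (k+1)\,|D_k|/(2^k R_x)^3 = O(1/R_x)$, hence only $O(1)$ after multiplication by $R_x+1$. Adding the two pieces produces the announced $O(\log R_x)$ bound.

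The one technical subtlety is that the ``small cap'' term in the bound for $|D_k|$ degenerates when $\|x\|$ is close to $0$, so that $R - R_x \ll R$; this is precisely the easy regime, however, since then every $z \in \partial B_R$ satisfies $\|x-z\| \asymp R \asymp R_x$ and both $(*)$ and the log-bound follow by trivial volume counting. One may either keep the two cases unified by taking the minimum with $R^{d-1}$ in the cap bound as above, or split off the simple case $R_x > R/2$ and reserve the dyadic argument for $R_x \leq R/2$, where $R/(R-R_x)$ is bounded.
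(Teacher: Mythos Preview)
Your proposal is correct and follows exactly the approach the paper indicates: the paper's entire proof is the single sentence ``Plugging this bound into \eqref{Cd} we obtain the following corollary,'' and you carry out precisely that summation of Lemma~\ref{lem-2moment} over $z\in\partial B_R$, supplying the dyadic--cap geometric estimate that the paper leaves implicit.
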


We now move to the recurrent case, where the following holds. 
\begin{Lemma}\label{lem-2m-recurrent}
Assume $d=2$.
Then there exists a finite absolute constant $C_2$ 
such that for any $R>0$ and any $z\in \partial B_R$
\begin{equation}\label{green-4}
\sum_{y\in B_R} G_R(0,y)G_R^2(y,z)\le C_2.
\end{equation}
\end{Lemma}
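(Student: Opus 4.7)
The plan is to split the sum over $y$ into a ``far'' part, where $y$ lies at distance more than $R/4$ from $z$, and a ``near'' part, where $y$ belongs to a dyadic shell close to $z$, then apply tailored bounds in each. The two key ingredients are Lemma~\ref{lem-AG}, which gives $G_R(y, z) \le \kappa/\|y-z\|$ for $y \in B_R$ and $z \in \partial B_R$ in dimension $d=2$, and the refined 2D Green's function bound
\begin{equation*}
G_R(0, y) \le C \cdot \frac{R - \|y\| + 1}{R} \qquad \forall y \in B_R \text{ with } \|y\| \ge R/2,
\end{equation*}
which reflects the fact that a 2D walk started from the origin visits a given site close to $\partial B_R$ only with probability of order $1/R$. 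I will also use the classical estimate $\sum_{y \in B_R} G_R(0, y) = \bE_0[H_{\partial B_R}] \le C R^2$.

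For the far region, the condition $\|y-z\| > R/4$ gives $G_R(y, z) \le 4\kappa/R$, so that
\begin{equation*}
\sum_{\|y-z\|>R/4} G_R(0, y)\, G_R^2(y, z) \;\le\; \frac{16\kappa^2}{R^2}\sum_{y\in B_R} G_R(0, y) \;\le\; 16\kappa^2 C \;=\; \mathcal{O}(1).
\end{equation*}
For the near region, $\|y-z\| \le R/4$ forces $\|y\| \ge \|z\|-\|y-z\| \ge 3R/4$, so the refined bound on $G_R(0, y)$ applies. I then decompose the region into dyadic shells $A_k := \{y \in B_R : 2^k \le \|y-z\| < 2^{k+1}\}$ for $k=0, 1, \ldots, \lceil \log_2(R/4)\rceil$. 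For $y \in A_k$, the relation $R - \|y\| \le 2^{k+1}$ yields $G_R(0, y) \le C \cdot 2^{k}/R$; combined with $G_R^2(y,z) \le \kappa^2/4^k$ and the area bound $|A_k| \le C\cdot 4^k$, the contribution of $A_k$ is at most $C \cdot 2^k/R$. Summing the geometric series gives $\sum_k C\cdot 2^k/R \le C' R^{-1}\cdot 2^{\log_2(R/4)+1} = \mathcal{O}(1)$, which together with the far part proves the claim.

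The main obstacle is establishing the refined estimate on $G_R(0, y)$ near $\partial B_R$, which is 2D-specific and has no direct counterpart in Lemma~\ref{lem-rasemote} (the $d \ge 3$ version). It can be justified via the potential kernel identity $G_R(0, y) = \bE_0[a(S(H_{\partial B_R}) - y)] - a(y)$ together with the rotational symmetry of the exit distribution from $0$: the mean value property of $\log\|\cdot\|$ on $\partial B_R$ gives $\bE_0[a(S(H_{\partial B_R}) - y)] \approx (2/\pi)\log R$ for all $\|y\|\le R$, and combining with $a(y)\approx (2/\pi)\log\|y\|$ yields $G_R(0,y) \lesssim \log(R/\|y\|)$, from which $\log(R/\|y\|) \le 2(R-\|y\|)/R$ on $\|y\|\ge R/2$ concludes. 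Equivalently, one may use the reversibility identity $G_R(0, y) = G_R(0, 0)\, \bP_y(H_0 < H_{\partial B_R})$ with $G_R(0, 0) \asymp \log R$ and the Gambler's ruin bound $\bP_y(H_0 < H_{\partial B_R}) \le C\log(R/\|y\|)/\log R$.
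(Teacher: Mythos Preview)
Your proof is correct and takes a genuinely different route from the paper's. The paper decomposes $B_R$ into \emph{radial} shells $\{y: h-1\le \|y\|<h\}$, uses $G_R(0,y)\le \kappa\log(R/\|y\|)$ directly, and then shows $\sum_{y:\,h-1\le\|y\|<h} G_R^2(y,z)\le c/(R-h+1)$ (essentially a Poisson-kernel computation on the circle of radius $h$). This reduces the whole sum to $\sum_{h=1}^R \log(R/h)\cdot(R-h+1)^{-1}$, which is identified with $\int_0^1 \frac{\log(1/x)}{1-x}\,dx<\infty$ via a Riemann approximation. By contrast, you decompose by distance to $z$: a far region handled by $G_R(y,z)\le 4\kappa/R$ together with $\sum_y G_R(0,y)=O(R^2)$, and a near region handled by dyadic shells around $z$ together with the linearised bound $G_R(0,y)\le C(R-\|y\|+1)/R$ for $\|y\|\ge R/2$. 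Both arguments rest on the same two inputs (Lemma~\ref{lem-AG} and the logarithmic bound for $G_R(0,\cdot)$); yours trades the Riemann-sum identification for a geometric series, which is arguably more robust, while the paper's yields a cleaner single computation with an explicit limiting integral.
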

\begin{proof}
To start with, note that if $y \in B_R$ then $G_R(0,y)\le \kappa\cdot \log(R/\|y\|)$. If, moreover,  $z\in \partial B_R$, by Lemma~\ref{lem-AG} we have that 
\[
G_R (y,z) = \bP(H_{\partial B_R}(S^y)=z)\le\frac{\kappa }{\|y-z\|} . 
\]
This implies that for any integer $h\le R$
\[
\sum_{y:\ h-1\le  \|y\|< h} G_R^2 (y,z ) 
\le \frac{ c}{R-h+1}
\]
for some $c$ absolute constant. 
It follows that 
\[
\sum_{y\in B_R} G_R(0,y)G_R^2(y,z) 
\leq c\kappa\cdot \sum_{h=1}^R \log(R/h)\cdot \frac{1}{R-h+1}.
\]
By a Riemann integral approximation,  
\[
\lim_{R\to\infty} \sum_{h=1}^R \log(R/h)\cdot \frac{1}{R-h+1}=\int_0^1 \frac{\log(1/x)}{1-x}\ dx<\infty , 
\]
from which \eqref{green-4} follows. 
\end{proof}

To conclude, we recall a discrete harmonic property for the restricted Green's function. 
\begin{Lemma}\label{lem-Harm} Let $d>2$,  $R>0$ and $z\in \partial B_R$. There is a positive constant $C$
independent of $R$ and $z$ such that
\begin{equation}\label{green-7}
\Big| |B_R| G_R(0,z)-\sum_{y\in B_R} G_R(y,z) \Big|\le C.
\end{equation}
\end{Lemma}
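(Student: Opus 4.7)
Plan for the proof of Lemma~\ref{lem-Harm}.

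\textbf{Step 1 (reduction via reversibility).} I would first use the identity $G_R(y,z) = \frac{1}{2d}\sum_{w\sim z,\,w\in B_R} G_R(y,w)$, obtained by decomposing over the last step before the walk's exit at $z \in \partial B_R$. Combined with the reversibility relation $G_R(y,w) = G_R(w,y)$ for $y,w\in B_R$, summing over $y$ yields
\[
\sum_{y\in B_R} G_R(y,z) = \frac{1}{2d}\sum_{w\sim z,\,w\in B_R} \bE^w[\tau_R],\qquad |B_R|\,G_R(0,z) = \frac{|B_R|}{2d}\sum_{w\sim z,\,w\in B_R} G_R(0,w),
\]
where $\tau_R := H_{\partial B_R}$. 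As $z$ has at most $2d$ neighbours in $B_R$, it is enough to prove $\big|\bE^w[\tau_R] - |B_R|\,G_R(0,w)\big| \le C$ for every $w \in B_R$ with $w\sim z\in\partial B_R$.

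\textbf{Step 2 (continuous cancellation).} In the continuum approximation (BM on $\R^d$ with generator $\Delta/(2d)$), one has $\bE^w[\tau_R] = R^2 - \|w\|^2$ and $|B_R|\,G_R(0,w) = \frac{2R^2}{d-2}\bigl[(\|w\|/R)^{2-d} - 1\bigr]$. Setting $\alpha := \|w\|/R$ and Taylor-expanding the difference $g(\alpha) := \frac{2}{d-2}(\alpha^{2-d}-1) - (1-\alpha^2)$ around $\alpha=1$ gives $g(1)=g'(1)=0$ and $g''(1)=2d$, so that
\[
|B_R|\,G_R(0,w) - \bE^w[\tau_R] \;=\; d\cdot(R-\|w\|)^2 + O\bigl((R-\|w\|)^3/R\bigr),
\]
which is $O(1)$ precisely because $w$ is within distance $O(1)$ of $\partial B_R$. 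This is the structural cancellation that the lemma quantifies in the lattice setting.

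\textbf{Step 3 (discretisation error).} To pass from the continuum to the lattice, for the exit time I would apply optional stopping to the martingale $\|S_n\|^2 - n$ to get the exact discrete identity $\bE^w[\tau_R] = R^2-\|w\|^2 + \bE^w[\|S_{\tau_R}\|^2 - R^2]$, whose last summand (an overshoot) is $O(1)$. For $|B_R|\,G_R(0,w)$, I would use the representation $G_R(y,w) = G(y,w) - \bE^y[G(S_{\tau_R},w)]$ valid in $d\ge 3$, the translation invariance $G(y,w)=G(0,y-w)$, and the asymptotic $G(0,\cdot) = \frac{2}{(d-2)\omega_d}\|\cdot\|^{2-d} + O(\|\cdot\|^{-d})$. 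The discrete harmonic measures involved are compared to the continuous Poisson kernel via Lemma~\ref{lem-AG} and Lemma~\ref{lem-rasemote}, which, after summation over $y\in B_R$, contribute only an $O(1)$ error to each of the two quantities, matching the continuous computation of Step~2.

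\textbf{Main obstacle.} The central difficulty is that both $\bE^w[\tau_R]$ and $|B_R|\,G_R(0,w)$ are individually of order $R$ for $w$ adjacent to $\partial B_R$, so no term-wise estimate can produce the desired $O(1)$ bound; a naive use of the reversibility reduction in Step~1 only reduces the problem to a comparison of two $O(R)$ quantities. The proof must therefore combine the continuous first-order (in $R-\|w\|$) cancellation exhibited in Step~2 with the sharp lattice Green's function asymptotics of Step~3, ensuring that the discretisation errors incurred when replacing $G$, $G_R(0,\cdot)$ and the harmonic measure by their continuum counterparts remain of the same order as the quadratic residual $d(R-\|w\|)^2 = O(1)$.
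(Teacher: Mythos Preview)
Your reduction in Step~1 is clean and correct, and the continuum cancellation in Step~2 is computed accurately. The gap is in Step~3. You claim that the overshoot term $\bE^w[\|S_{\tau_R}\|^2 - R^2]$ is $O(1)$, but this is false: since $R \le \|S_{\tau_R}\| < R+1$, we have $\|S_{\tau_R}\|^2 - R^2 = (\|S_{\tau_R}\|-R)(\|S_{\tau_R}\|+R)$, and the second factor is of order $R$ while the first is typically $\Theta(1)$ (lattice points on $\partial B_R$ do not cluster within $O(1/R)$ of the sphere). Hence the overshoot contributes a term of order $R$, not $O(1)$. Likewise, your expansion of $|B_R|\,G_R(0,w)$ incurs an $O(R)$ discretisation error coming from $|B_R| = \omega_d R^d + O(R^{d-1})$ multiplied against $G_R(0,w) \asymp R^{2-d}$. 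So after Step~2 you are left comparing two quantities each known only up to $O(R)$, and you have not given any mechanism by which these two $O(R)$ errors cancel to $O(1)$. Your ``main obstacle'' paragraph correctly diagnoses this difficulty, but the proposal does not resolve it.

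By contrast, the paper does not attempt this direct cancellation at all. It instead invokes Theorem~5.2 of \cite{asselah2013sublogarithmic}, which already provides
\[
\Big| |B_{R-c}|\,G_R(0,z) - \sum_{y\in B_{R-c}} G_R(y,z) \Big| \le c',
\]
for some absolute $c,c'$, and then passes from $B_{R-c}$ to $B_R$ by bounding $(|B_R|-|B_{R-c}|)\,G_R(0,z) = O(1)$ and $\sum_{y\in B_R\setminus B_{R-c}} G_R(y,z)$ (the expected time a walk from $z$ spends in the thin annulus before exiting $B_R$, which is $O(1)$ by Proposition~B.1 of the same reference). In other words, the delicate $O(R)$ cancellation you are aiming for is precisely what Theorem~5.2 encapsulates; your proposal would amount to reproving that theorem from scratch, and the sketch you give does not contain the ingredients needed to do so.
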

\begin{proof}
This is a simple consequence of Theorem 5.2 of \cite{asselah2013sublogarithmic}, and Proposition B.1 of the Appendix of \cite{asselah2013sublogarithmic}.
Indeed, Theorem 5.2 of \cite{asselah2013sublogarithmic}  establishes that for some positive constants $c ,c '$ (independent of $R$)
\begin{equation}\label{green-8}
\Big| |B_{R-c }|\cdot G_R(0,z)-\sum_{y\in B_{R-c }} G_R(y,z) \Big| \le c ' .
\end{equation}
If we define $\mathfrak D:=|B_R| G_R(0,z)-\sum_{y\in B_R} G_R(y,z)$, then 
\[
\begin{split}
|\mathfrak D |\le &\Big(|B_R|-|B_{R-c }|\Big)\cdot G_R(0,z) +\Big|
|B_{R-c }|\cdot G_R(0,z)-\sum_{y\in B_{R- c}} G_R(y,z)\Big|\\
+& \sum_{y\in B_R\bs B_{R- c }} G_R(y,z) .
\end{split}
\]
Now, $(|B_R|-|B_{R- c }|)G_R(0,z)$ is of constant order, and the sum over the annulus
$B_R\bs B_{R- c }$ of $G_R(y,z)$ can be thought of as the time spent in the annulus by a simple random walk 
before exiting $B_R$. Proposition B.1 of the Appendix of \cite{asselah2013sublogarithmic} then establishes that this is of constant order, which concludes the proof. 
\end{proof}

\subsection{A classical approach to the inner bound} \label{sec:classical}
Here we recall a classical approach to proving the inner bound of a shape theorem, introduced by Lawler, Bramson and Griffeath in the seminal paper \cite{lawler1992internal}, together with a key independence feature observed by Asselah and Gaudilli{\`ere} in \cite{asselah2013logarithmic}. 
This approach will be used in Section \ref{sec:d2} to prove a weak inner bound for BIDLA  in dimension $d=2$, and in Section \ref{sec:innerbound} to prove a strong inner bound for $d\geq 3$.

Let $\eta$ be an arbitrary initial configuration, and recall that $|\eta|=\sum_{z\in \mathbb{Z}^d} \eta(z)$ denotes the number of particles in $\eta$. With a slight abuse of notation, we think of $\eta$ also as a set of positions, and write $y\in \eta$ when summing over  the positions of the $|\eta|$ sites in $\eta$.

Assume that $\eta $ is supported in a Euclidean ball $B_R$ for some $R>0$. 
We want to show that stabilising $\eta$ with freezing on $\partial B_R$ results in filling the ball $B_R$ with high probability. To this end we need upper bounds for 
	\[ \P ( B_R \nsubseteq \mathscr{S}_{B_R}^{\eta} ) 
	\leq \sum_{z\in B_R} \P ( \mathscr{S}_{B_R}^{\eta} (z) =0 ) . \]
The idea proposed by Lawler, Bramson and Griffeath is the following: let first each particle in $\eta$ evolve as a BRW with settling upon reaching an empty site and freezing upon reaching $\partial B_R$. 
The settling locations inside the ball are clearly correlated, since whether a site is empty or not at a given time depends on the trajectories of previous particles. However, if we \emph{continue} the evolution of the BRWs \emph{after settling}, which is sometimes referred to as introducing \emph{ghost particles}, we can complete the particles' trajectories to full \emph{independent} BRWs. 
Now, while the starting locations of the ghosts are correlated, there is at most one ghost per site inside the ball $B_R$. Thus we can only increase the chance of reaching a boundary site $z$ if we release \emph{exactly} one ghost per site. This allows one to recover independence. 

To formalise this strategy, fix an arbitrary site $z \in B_R \cup \partial B_R$, and note that  $\cS^\eta_{B_R}(z)$ is the local time at $z$ in the BIDLA phase, i.e.\ the number of visits at $z$ when releasing one particle from each site of $\eta$ and letting them evolve as in BIDLA dynamics with freezing upon reaching $\partial B_R$. 
If $z \in \partial B_R$ then $\cS^\eta_{B_R}(z)$ denotes the number of particles frozen at $z$. 

Write $\eta_G := \cS^\eta_{B_R} \cap B_R$ for the particle configuration obtained by BIDLA-stabilising $\eta$ inside $B_R$. Then 
	\begin{equation}\label{eq:centralequality}
	\cS^\eta_{B_R}(z)+\tilde \ell^{\eta_G }_{B_R}(z) = \ell^{\eta}_{B_R}(z) , 
	\end{equation}
where  $\tilde \ell^{\eta_G}_{B_R}(z)$ denotes the local time of the \emph{ghosts}, that is the number of visits to $z$ when releasing exactly one BRW from each site of $\eta_G$ with freezing on $\partial B_R$. The tilde notation is there to emphasize that, given their starting locations, the evolution of the ghosts is \emph{independent} from that of the BIDLA particles. Now, since there is at most one ghost per site, 
	\[ \tilde \ell^{\eta_G }_{B_R}(z) \leq \tilde \ell^{B_R }_{B_R}(z) , \]
where $B_R$ denotes the particle configuration made of exactly one particle at each site of $B_R$. This has the advantage that the dependence on the configuration $\eta_G$, and hence on the BIDLA particles, is lost. Therefore we get 
  the crucial inequality\footnote{Here \eqref{eq:centralequality} and \eqref{eq:centralinequality} are  analogous to, respectively, the identity $N+L=M$ and the inequality $N+\tilde L \geq M$ in \cite{lawler1992internal}, Section 3.
  }
 	\begin{equation}\label{eq:centralinequality}
	 \cS^\eta_{B_R}(z)+\tilde \ell^{B_R}_{B_R}(z)\geq\ell^{\eta}_{B_R}(z) , 
	 \end{equation} 
with the left hand side being a sum of \emph{independent} random variables. 
It follows that, for any $\lambda >0$, 
\begin{equation}\label{eq:Nzdim2}
\begin{split}
\mathbb{P}(\cS^\eta_{B_R}(z)=0)  &
\leq 
\frac{ \mathbb{E} \big[  e^{- \lambda \cS^\eta_{B_R}(z)} ]\cdot \mathbb{E} 
\big[ e^{- \lambda \tilde \ell^{B_R}_{B_R}(z)} \big]} 
{ \mathbb{E} \big[ e^{- \lambda \tilde \ell^{B_R}_{B_R}(z)}  \big] }  
\leq 
 \frac{ \mathbb{E} \big[ e^{- \lambda \ell^{\eta}_{B_R}(z) }  \big]  }
 {    \mathbb{E}  \big[ e^{- \lambda \tilde \ell^{B_R}_{B_R}(z)} \big]   } 
 \\& =  
 \frac{ \mathbb{E} \big[ e^{- \lambda \bar\ell^{\eta}_{B_R}(z)} \big]     }
 {    \mathbb{E}  \big[ e^{- \lambda\bar \ell^{B_R}_{B_R}(z)}\big ]  }
\cdot  \exp\Big(- \lambda  \mathbb{E}\big[\ell^{\eta}_{B_R}(z)-\ell^{B_R}_{B_R}(z)\big]  \Big) 
 \\ & \leq  
\bigg(  \prod_{y \in \eta } 
 \mathbb{E}   \big[ e^{  - \lambda \bar \ell^{y}_{B_R}(z)  } \big ] \bigg) 
 \cdot 
 \exp\Big(- \lambda \mathbb{E}\big[\ell^{\eta}_{B_R}(z)-\ell^{B_R}_{B_R}(z)\big]  \Big) . 
 \end{split}
\end{equation}
Here we have set  $ \bar \ell^{y}_{B_R}(z) :=  \ell^{y}_{B_R}(z) - \E [  \ell^{y}_{B_R}(z) ] $ for brevity. The second inequality follows from independence of $\cS^\eta_{B_R}(z)$ and $\tilde \ell^{B_R}_{B_R}(z)$, while the last inequality follows from Jensen's inequality. 
Now, using that $e^{ x } \leq 1 + x + \frac{x^2}{2} e^{x^+}$ (where $x^+$ equals $x$ if $x>0$ and $0$ otherwise), and the fact that  for any $y \in \eta $
		\[ (- \bar \ell^{y}_{B_R}(z) )^+ 
		 \leq 
		 \mathbb{E} \big[  \ell^y_{ B_{R}  }(z)  \big ]  , 
		 \]
 we gather that
 	\[  \begin{split} 
 	\mathbb{E}   \big[ e^{  - \lambda \bar \ell^{y}_{B_R}(z)  } \big ] 
 	& \leq 
 	 1 + \frac{ \lambda^2}{2} \mathbb{E} \big[ \big(  \bar \ell^{y}_{B_R}(z)\big)^2 \cdot e^{ \lambda ( -\bar \ell^{y}_{B_R}(z))^+ } \big ]
 	 \\ &  \leq 
	1 +  \frac{ \lambda^2}{2} \mathbb{E} \big[ \big( \bar \ell^{y}_{B_R}(z)\big)^2\big]  \cdot e^{\lambda \mathbb{E} [  \ell^y_{ B_{R}  }(z)  ]  } 
	\\ & \leq  
	\exp \Big( \frac{\lambda^2}{2}\mathbb{E} \big[ \big( \bar \ell^{y}_{B_R}(z)\big)^2\big] \cdot  e^{\lambda \mathbb{E} [  \ell^y_{ B_{R}  }(z)  ]  } \Big)  
	\end{split} 
 	\]
 for any $y \in \eta$. Putting all together, we obtain the following estimate for $z \in B_R \cup \partial B_R$:
 	\begin{equation}\label{startingpoint}
 	 \begin{split} 
 	 \mathbb{P}(\cS^\eta_{B_R}(z)=0)  
	\leq 
	\exp \bigg( 
	- \lambda \mathbb{E} & \big[\ell^{\eta}_{B_R}(z)-\ell^{B_R}_{B_R}(z)\big] +
	\\ & + \frac{\lambda^2}{2} \sum_{y \in \eta } 
	\mathbb{E} \big[ \big( \bar \ell^{y}_{B_R}(z)\big)^2\big] \cdot  e^{\lambda \mathbb{E} [  \ell^y_{ B_{R}  }(z)  ]  } \bigg) . 
	\end{split} 
 	\end{equation}
This will be our starting point for proving weak and strong inner bounds in the next sections. We remark that, while in classical IDLA the above probability can be bounded using only first moment bounds and large deviations for sums of independent Bernoulli, here we do need to control the second moment of the local time due to the fact that the number of particles is not conserved. This is a new feature introduced by the branching.

\section{No shape theorem in dimension 2}\label{sec:d2}
In this section we prove that BIDLA admits no shape theorem in dimension $d=2$ by showing that the process finds itself in an asymmetric state infinitely often. To formalise this, 
recall that  
for any $\varepsilon  >0$  the set $\mathcal{S}_\e$ denotes the set of $\varepsilon$-symmetric states, and if $A \notin  \mathcal{S}_\e $ the cluster $A$  is said to be  $\e$-asymmetric.

We prove  the $d=2$ statement in Theorem \ref{th:main}, namely that there exists $\varepsilon >0$ such that 
	\begin{equation}\label{eq:goal1} 
	\P ( A(t) \notin \mathcal{S}_{\varepsilon} \mbox{ for infinitely many } t ) =1 . 
	\end{equation}
To this end, it is convenient to introduce the following  time-change in the BIDLA process. 
Recall that  $A(0) = \emptyset$. Define inductively 
	\[\begin{cases}
	 \tau_1=1 , \\
	\tau_k = \inf \{ t\geq \tau_{k-1} : A(t) \neq A(\tau_{k-1} ) \} ,  
	\end{cases} 
	\] 
and let $\tilde{A} (0 ) := \emptyset$ and 
	\[ 
	\tilde{A} (k) := A(\tau_k ) , \qquad k\geq 1. \] 
Then the process $(\tilde{A} (t))_{t \geq 0}$ coincides with the original BIDLA process only taken at times when the cluster changes, i.e.\ it is the jump process associated with the Markov chain $(A(t))_{t \geq 0}$. 

To prove \eqref{eq:goal1} it clearly suffices to show that there exists $\varepsilon >0$ such that 
	\begin{equation} \label{eq:goal2}
	\P ( \tilde{A}(t) \notin \mathcal{S}_{\varepsilon} \mbox{ for infinitely many } t ) =1 . 
	\end{equation} 
To this end, we show that each time the process finds itself in an $\varepsilon$-symmetric state, it will jumps to an $\varepsilon$-asymmetric state with uniformly positive probability, as stated below. 

\begin{Proposition}\label{prop:breakdim2}
For any $\varepsilon  >0$ small enough, 
 there exists a positive constant $ c = c(\varepsilon )$, depending only on $\e$,  such that 
	\[ \P ( \tilde{A} (t) \notin \mathcal{S}_{\varepsilon}  \, 
	| \tilde{A} (t-1) \in \mathcal{S}_\varepsilon ) \geq c . \]
\end{Proposition}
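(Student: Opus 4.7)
The starting point is that, by the Abelian property and the definition of the jump process, the transition $\tilde A(t-1)\to\tilde A(t)$ amounts to releasing a single critical BRW from the origin and conditioning on the event $E_A$ that at least one particle settles outside $\tilde A(t-1)$. Condition on $\tilde A(t-1)=A$ with $B_{(1-\e)n}\subseteq A\subseteq B_{(1+\e)n}$ for some $n$. I aim to construct, uniformly in $A$ and in $n$, an event of conditional probability $\geq c(\e)>0$ on which $\tilde A(t)$ contains a site at Euclidean distance $\geq (1+3\e)n$ from the origin. Writing $R_{\min}$ and $R_{\max}$ for the radii of the largest inscribed and smallest circumscribed Euclidean balls of $\tilde A(t)$, one then checks that $R_{\min}(\tilde A(t))\leq (1+\e)n$ (since the cluster cannot acquire sites in the direction opposite to the bump) while $R_{\max}(\tilde A(t))\geq (1+3\e)n$, so that $R_{\max}/R_{\min}>(1+\e)/(1-\e)$ for $\e$ small, forcing $\tilde A(t)\notin\mathcal{S}_\e$.

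The favourable event is built in three stages. First (pioneers), I require that the BRW survives to $\partial B_{(1-\e)n}$ and deposits there a number of pioneers in the window $[\alpha n^2,\beta n^2]$ for suitable constants $0<\alpha<\beta<\infty$. By Proposition \ref{le:distance} both this event and the conditioning event $E_A$ (applied to $A\subseteq B_{(1+\e)n}$) have probability $\Theta(n^{-2})$, hence their ratio is uniformly bounded below; combined with Corollary \ref{cor:conditioned} this yields the desired pioneer count with uniformly positive conditional probability.

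Second (clumping), I impose that a positive fraction of these pioneers lies within a single arc $\Gamma \subset \partial B_{(1-\e)n}$ of Euclidean diameter $\theta_0 n$, for some small fixed $\theta_0>0$. The underlying mechanism is that, conditionally on survival to distance of order $n$, a critical BRW is essentially carried by a single ancestral line reaching $\partial B_{n/2}$; once that line is fixed, the surviving descendants of its endpoint are diffusing in a ball of radius $O(n)$ around it, and tuning $\theta_0$ small forces a positive fraction of those descendants into an angular sector of aperture $\theta_0$. I would formalise this by conditioning on the first ancestor reaching $\partial B_{n/2}$, invoking the tree-Markov property, and then applying Proposition \ref{le:distance} to the subtree rooted at that ancestor to lower bound the number of its descendants that reach $\partial B_{(1-\e)n}$ inside the chosen sector.

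Third (bump), I use the $\Theta(n^2)$ clumped pioneers to drive a classical-IDLA-type inner bound: BIDLA stabilization of this initial configuration should, with uniformly positive probability, cover a disc of radius $\delta n$ around $\Gamma$ for some $\delta=\delta(\theta_0)>0$ independent of $\e$. The argument plugs the two-dimensional second-moment bound of Lemma \ref{lem-2m-recurrent} into the LBG-type generating-function estimate \eqref{startingpoint} of Section \ref{sec:classical}; taking $\e<\delta/5$ then ensures that the resulting bump reaches distance $(1-\e)n+\delta n\geq (1+3\e)n$, completing the construction. The main obstacle is precisely this last step: in dimension two $G_R$ has logarithmic diagonal, so the tuning parameter $\lambda$ in \eqref{startingpoint} must be of order $1/\log n$, and controlling the quadratic remainder in \eqref{startingpoint} in this regime requires exactly the sharpness of Lemma \ref{lem-2m-recurrent} rather than any crude bound. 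A secondary conceptual difficulty is the clumping step, which has no analogue in classical IDLA and genuinely exploits the branching structure.
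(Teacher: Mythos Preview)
Your three-stage outline (pioneers $\to$ localisation $\to$ bump) matches the paper's strategy, but there is a genuine gap in how you conclude asymmetry. The assertion that $R_{\min}(\tilde A(t))\le(1+\e)n$ ``since the cluster cannot acquire sites in the direction opposite to the bump'' is unjustified: the pioneers outside your arc $\Gamma$, together with the particles that overflow from the bump construction, are still part of the BIDLA stabilisation and may settle anywhere outside $A$. Without controlling these leftover particles you cannot bound the volume (or inscribed radius) of $\tilde A(t)$, and the added mass could in principle restore symmetry. The paper closes this gap with two further events: in the bump step ($E_3$) it also requires that at most $\gamma n^2$ particles escape the local ball $B_{5\e n}(x)$, and it adds a final event $E_4$ asking that all leftover particles (the $\le\gamma n^2$ escapees and the $\le(\beta-\alpha)n^2$ unused pioneers) die within distance $\e n$ of their current positions. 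This has uniformly positive probability since there are $O(n^2)$ such particles and each survives to distance $\e n$ with probability $O((\e n)^{-2})$. On the intersection $E_1\cap E_2\cap E_3\cap E_4$ the total volume increase is $O(\e^2 n^2)$, and asymmetry follows by comparing the reach $|z|\ge n(1+2\e)$ with the volume bound (not with $R_{\min}$ directly).

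Two secondary points. Your clumping mechanism via ``first ancestor reaching $\partial B_{n/2}$'' is awkward to formalise, since there may be many pioneers there and the survival conditioning does not pass cleanly to any single subtree. The paper's device ($E_2$) is cleaner: release the $\alpha n^2$ pioneers as independent BRWs frozen at distance $\e n$ and ask that \emph{exactly one} survives; since each survives with probability $\Theta((\e n)^{-2})$ this has positive probability, and the survivor's descendants are then automatically confined to $\partial B_{\e n}(x)$. Finally, your remark that $\lambda$ must be of order $1/\log n$ is off for boundary targets: for $z\in\partial B_R$ one has $G_R(y,z)=O(1/R)$ (a hitting probability, not a bulk Green value), so the optimal choice in \eqref{startingpoint} is $\lambda\sim 1/n$, and combined with Lemma~\ref{lem-2m-recurrent} this yields only a constant bound $e^{-c\alpha/\e^2}$ on $\P(\cS^\eta_{B_R}(z)=0)$. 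Such a constant cannot absorb a union bound over $\Theta(n^2)$ sites, so ``cover a disc of radius $\delta n$'' is too strong; the paper accordingly only shows that a \emph{single} target $z\in\partial B_{5\e n}(x)$ is reached with positive probability.
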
 
Note that, crucially, $c$ does not depend on $t$ nor on the  cluster $\tilde{A}(t)$. 

Assuming this proposition, the claim  \eqref{eq:goal2} follows immediately. Indeed, for any given $\varepsilon >0$ either the process visits $\mathcal{S}_\varepsilon$ finitely many times, in which case $\tilde{A}(t) \notin \mathcal{S}_\varepsilon$ eventually in $t$, or  the process visits $\mathcal{S}_\varepsilon$ infinitely often. On the latter event, by Proposition \ref{prop:breakdim2} the number of visits of the process to $\mathcal{S}_{\varepsilon}^c$ stochastically dominates an infinite sum of i.i.d.\ Bernoulli$(c)$ random variables, which is almost surely infinite. 

It thus remains to prove Proposition \ref{prop:breakdim2}.

\subsection{Proof of Proposition \ref{prop:breakdim2}} \label{sec:proofd2}
Let $A  \in \mathcal{S}_\varepsilon $ be any $\varepsilon$-symmetric cluster, and write  $\pi n^2 := |A|$ for the volume of $A$, so that $A $ is $\varepsilon $-close to the ball $B_n$ (note the slight abuse of notation: $n$ may not be an integer).

One step of (time-changed) BIDLA consists of releasing a BRW from the origin conditioned on reaching $\partial A$, and stabilising. We perform the stabilization procedure as follows: we first launch the BRWs from the origin, and show that most of the particles get close to the inner boundary of $A$ in a localised manner (cf.\ Lemmas \ref{le1dim2} and \ref{le2dim2}). We then release those particles, and show that with positive probability they create the desired asymmetry (cf.\ Lemma \ref{le3dim2}). Lastly, we release the remaining frozen particles and ask that they die  within distance $\e n$, so that they do not contribute significantly to the cluster's growth (cf.\ Lemma \ref{le4dim2}). See Figure \ref{fig:d2} for a depiction of this procedure.  \\

Fix an arbitrary $\e >0$, which will later be taken to be small enough. Introduce the following four events, depending on some parameters $\alpha , \beta , \gamma >0$ to be fixed later:
\begin{itemize}
\item $E_1= E_1 (\alpha , \beta )$ is the event that when releasing a BRW from the origin, conditioned on reaching $\partial B_{n(1-\e )}$ and stopped upon reaching $\partial B_{n(1-2\e )}$, the number of particles frozen on $\partial B_{n(1-2\e )}$ lies in the interval $[\alpha n^2, \beta n^2 ]$ (see Figure \ref{fig:d2} (b)). 
\item $E_2 = E_2 (\alpha , \beta ) $ is the event that, 
when releasing the first $\alpha n^2 $ particles frozen on $\partial B_{n(1-2\e )}$, each with freezing upon reaching distance $ \e n$ from its starting location, 
the following happens: 
\begin{itemize}
\item[(i)] exactly one of the pioneers survives to distance $\e n$, call its starting location $x$ (hence $x \in \partial B_{n(1-2\e )}$), 
\item[(ii)] the number of particles frozen on $\partial B_{\e n}(x)$ lies in the interval $[\alpha n^2, \beta n^2 ]$. 
\end{itemize}
These frozen particles, which are localised close to the inner boundary of $A$, are the ones that will create the desired asymmetry with positive probability (see Figure~\ref{fig:d2} (c)-(d)). 
\item $E_3 = E_3 ( \alpha , \gamma ) $ is the event that, when releasing the first $\alpha n^2 $ particles frozen on $\partial B_{\e n }(x)$ with freezing on $\partial B_{5\e n } (x)$, a given point $z \in \partial B_{5\e n } (x)$ is added to the cluster, and at most $\gamma n^2$ particles are frozen on $\partial B_{5\e n } (x)$ (see Figure \ref{fig:d2} (e)). 
\item $E_4 = E_4 ( \gamma , \delta  ) $ is the event that, when releasing the (at most  $\gamma n^2$) particles frozen on $\partial B_{5\e n } (x)$ and the (at most $(\beta - \alpha ) n^2$) pioneers frozen on $\partial B_{n(1-2\e)}$, no BRW travels by more than $\e n$ before extinction (see Figure \ref{fig:d2} (f)). 
\end{itemize}
\begin{figure}[!!ht]
    \begin{center}
     \centering
     \mbox{ 
  \includegraphics[width=.45\textwidth]{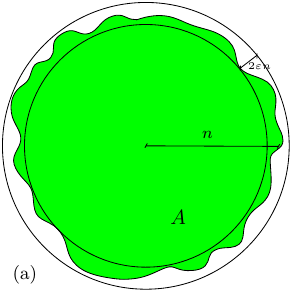}
   \qquad \includegraphics[width=.45\textwidth]{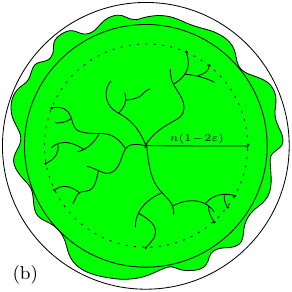}
   }
   \\ 
    \mbox{ \includegraphics[width=.45\textwidth]{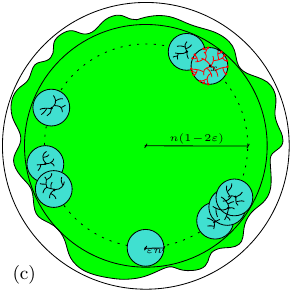}
  \qquad 
  \includegraphics[width=.45\textwidth]{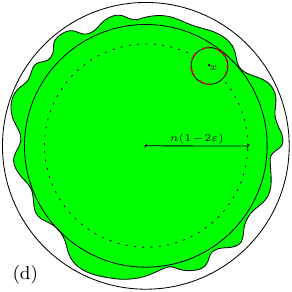}}
   \\ 
   \mbox{
    \includegraphics[width=.45\textwidth]{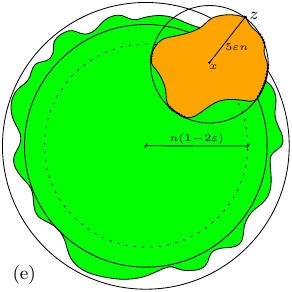}
   \qquad \includegraphics[width=.45\textwidth]{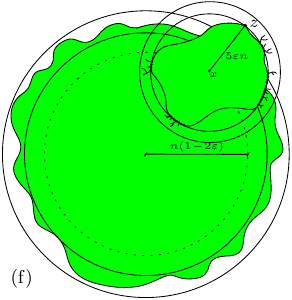}}
       \end{center}
 \caption{A depiction of the events $E_1 , E_2 , E_3 , E_4 $ defined above.} \label{fig:d2}
 \end{figure}

Note that on the event $E_1 \cap E_2 \cap E_3 \cap E_4 $ we have that the resulting BIDLA cluster after one step, call it $A' $, contains the point $z$, and so its radius exceeds $  n(1+2\e )$, while its volume has increased by at most  $\pi ( 5\e n)^2$, 
which implies that 
	\[  \sqrt{|A'|/\pi } \leq \sqrt{ ( 1 + 25\e^2)} \, n  . \]
It follows that, by taking $\e $ small enough so that 
	\begin{equation}\label{eps_small}
	 1+2\e 
	>\sqrt{ ( 1 + 25\e^2)} , 
	\end{equation}
the cluster $A'$ not $\e$-symmetric. 

The above reasoning shows that  it suffices to prove that 
	\[ \P ( E_1 \cap E_2 \cap E_3  \cap E_4) \geq c(\varepsilon ) >0. \]
We do so in  Lemmas \ref{le1dim2}, \ref{le2dim2}, \ref{le3dim2} and \ref{le4dim2} below. 
	
\begin{Lemma}\label{le1dim2}
For any $\alpha >0$ and for any $\beta >0 $ large enough, there exists a positive constant $c_1 = c_1 ( \alpha , \beta ) $ such that $\P (E_1  ) \geq c_1$. 
\end{Lemma}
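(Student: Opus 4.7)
The plan is to use a Bayes decomposition and then bound each factor separately with the preliminary estimates from Section~\ref{sec:prelim}. To this end I will set $N := \ell^0_{B_{n(1-\e)}}(\partial B_{n(1-\e)})$, $M := \ell^0_{B_{n(1-2\e)}}(\partial B_{n(1-2\e)})$ and $I := [\alpha n^2, \beta n^2]$, so that $\P(E_1) = \P(M \in I \mid N > 0)$. Since $\{N > 0\} \subseteq \{M > 0\}$, I may write
\[
\P(E_1) \;=\; \frac{\P(N > 0 \mid M \in I)\,\P(M \in I \mid M > 0)\,\P(M > 0)}{\P(N > 0)},
\]
and the task reduces to lower-bounding each factor in the numerator while upper-bounding the denominator.

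By Proposition~\ref{le:distance}, both $\P(N > 0)$ and $\P(M > 0)$ are of order $1/n^2$ (with different constants depending only on $\e$ and $d$), so their ratio is bounded below uniformly in $n$. Corollary~\ref{cor:conditioned} then gives $\P(M \in I \mid M > 0) \geq c(\alpha, \beta, d) > 0$ as soon as $\alpha > 0$ and $\beta \geq \bar\beta(d)$: this is precisely where the assumption ``$\beta$ large enough'' in the statement of the lemma enters.

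For the last factor $\P(N > 0 \mid M \in I)$, I would use the branching Markov property. Conditional on the $M = k$ pioneers being located at $y_1, \ldots, y_k \in \partial B_{n(1-2\e)}$, the sub-BRWs rooted at them are independent, each distributed as a fresh BRW started at $y_i$. Since $y_i \in B_{n(1-\e)}$, applying the lower bound in Proposition~\ref{le:distance} to the ball $B_{n(1-\e)}$ gives that each such sub-BRW reaches $\partial B_{n(1-\e)}$ with probability at least $c/n^2$, uniformly in the starting point $y_i$. By independence, the probability that none of the $k \geq \alpha n^2$ sub-BRWs reaches $\partial B_{n(1-\e)}$ is at most $(1 - c/n^2)^{\alpha n^2} \leq e^{-c\alpha}$, so $\P(N > 0 \mid M \in I) \geq 1 - e^{-c\alpha} > 0$, uniformly in the pioneer configuration. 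Combining the three bounds yields $\P(E_1) \geq c_1(\alpha, \beta) > 0$, as claimed.

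The main subtlety is keeping the estimates uniform in $n$ through the conditioning on the rare event $\{N>0\}$: a direct attack on $\P(M \in I \mid N > 0)$ would require handling simultaneously the lower bound on $M$ and the survival of the BRW past the outer sphere, which are correlated in a non-trivial way. The Bayes decomposition above cleanly separates these two effects, isolating the ``enough pioneers at radius $n(1-2\e)$'' event (controlled by the Paley--Zygmund-type argument underlying Corollary~\ref{cor:conditioned}) from the ``some descendant travels the extra $\e n$'' event (handled by the independence of the sub-BRWs afforded by the branching Markov property).
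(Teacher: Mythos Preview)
Your proof is correct and in fact more complete than the paper's. The paper's argument is a one-liner: it writes
\[
\P(E_1)=\P\big(\ell^0_{B_{n(1-2\e)}}(\partial B_{n(1-2\e)})\in[\alpha n^2,\beta n^2]\ \big|\ \ell^0_{B_{n(1-\e)}}(\partial B_{n(1-\e)})>0\big)
\]
and simply says ``which exceeds $c_1$ by Corollary~\ref{cor:conditioned}''. Strictly speaking, Corollary~\ref{cor:conditioned} is stated for a \emph{single} radius (pioneer count and conditioning event at the same $\partial B_R$), so the paper is implicitly using that the two radii $n(1-2\e)$ and $n(1-\e)$ are comparable and that the discrepancy can be absorbed into the constants. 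Your Bayes decomposition makes this precise: it isolates the term $\P(M\in I\mid M>0)$, to which Corollary~\ref{cor:conditioned} applies verbatim (with $R=n(1-2\e)$ and rescaled parameters $\alpha'=\alpha/(1-2\e)^2$, $\beta'=\beta/(1-2\e)^2$), and handles the mismatch between the two radii through the factors $\P(M>0)/\P(N>0)$ and $\P(N>0\mid M\in I)$, controlled respectively by Proposition~\ref{le:distance} and the branching Markov property.

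So the two approaches reach the same conclusion via the same ingredients; the paper compresses into one citation what you spell out in three steps, and your version is the one that actually justifies why the difference in conditioning radii is harmless.
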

\begin{proof}
Recall that for a given subset $K\subset \mathbb{Z}^2$  we denote by $\ell_K^0 (z) $ the number of particles frozen at $z \in \partial K $ when we launch a BRW from the origin with freezing upon exiting $K$. 
Then 
	\[ \P (E_1) = \P \Big( \ell_{B_{n(1-\e )}}^0 ( \partial B_{n(1-2\e )} )  \in [\alpha n^2 , \beta n^2 ]
	\, \Big| \ell_{B_{n(1-\e )}}^0 ( \partial B_{n(1-\e )} ) >0 \Big) 
	\] 
which exceeds $c_1$ by Corollary \ref{cor:conditioned}. 
\end{proof}

\begin{Lemma}\label{le2dim2}
For any $\alpha >0$ and for any $\beta >0 $ large enough, there exists a positive constant $c_2 = c_2 ( \alpha , \beta ) $ such that $\P (E_2  | E_1 ) \geq c_2$. 
\end{Lemma}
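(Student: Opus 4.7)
The plan is to decompose $E_2$ into two essentially independent sub-events and bound each by a positive constant. The starting observation is that, once the first $\alpha n^2$ frozen pioneers on $\partial B_{n(1-2\e)}$ are released, they evolve as $\alpha n^2$ independent BRWs starting at their respective frozen locations, each with freezing upon reaching distance $\e n$ from its starting site. The conditioning on $E_1$ merely fixes these starting positions on $\partial B_{n(1-2\e)}$; it does not affect the subsequent independent dynamics, thanks to the Markov/Abelian property of BIDLA.

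For item (i) of $E_2$, I would first invoke Proposition \ref{le:distance}: each pioneer's BRW reaches distance $\e n$ from its start with probability $p = \Theta(1/(\e n)^2)$. Consequently, the total number $K$ of pioneers surviving to distance $\e n$ is $\mathrm{Binomial}(\alpha n^2, p)$ with mean of order $\alpha/\e^2$, uniformly in $n$. A Poisson approximation (or direct binomial computation) then yields
\[
\P(K = 1) \geq c_3(\alpha, \e, d) > 0.
\]
This step crucially exploits the two-dimensional nature of the problem: the mean number of survivors is a bounded constant in $n$, so the event ``exactly one survives'' carries a probability bounded below independently of $n$.

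For item (ii), condition on $K = 1$ and let $x \in \partial B_{n(1-2\e)}$ denote the starting location of the unique surviving BRW. By exchangeability and independence of the pioneers, the count of particles this survivor freezes on $\partial B_{\e n}(x)$ has the same law as $\ell^0_{B_{\e n}}(\partial B_{\e n})$ conditioned on being positive. Applying Corollary \ref{cor:conditioned} with parameters $\alpha' := \alpha/\e^2$ and $\beta' := \beta/\e^2$ at radius $R = \e n$, and provided $\beta/\e^2 \geq \bar\beta(d)$ (this is the sense in which $\beta$ must be taken large enough, with $\e$ already fixed by \reff{eps_small}), we obtain
\[
\P\bigl(\ell^0_{B_{\e n}}(\partial B_{\e n}) \in [\alpha n^2, \beta n^2] \,\big|\, \ell^0_{B_{\e n}}(\partial B_{\e n}) > 0 \bigr) \geq c_4(\alpha, \beta, \e, d) > 0.
\]
Multiplying the two bounds via conditional independence gives $\P(E_2 \mid E_1) \geq c_3 c_4 =: c_2$.

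The main technical point I anticipate is justifying this independence structure under the conditioning on $E_1$: one must verify that, given the (random) locations $y_1, \ldots, y_{\alpha n^2}$ of the frozen pioneers produced in defining $E_1$, the joint law of the $\alpha n^2$ subsequent BRWs is a product of independent BRW laws started at the $y_i$'s. This follows from the definition of partial stabilization with freezing and the Abelian property of Section \ref{sec:BIDLA}. Once this independence is cleanly in place, the rest of the argument is the two-step calculation above.
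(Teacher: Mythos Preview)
Your proposal is correct and follows essentially the same approach as the paper. Both arguments exploit that the $\alpha n^2$ released pioneers evolve as independent BRWs (with no settling in this phase), and both reduce to the computation $\P(E_2\mid E_1)=\alpha n^2\cdot\P(\ell^0_{B_{\e n}}(\partial B_{\e n})\in[\alpha n^2,\beta n^2])\cdot\P(\ell^0_{B_{\e n}}(\partial B_{\e n})=0)^{\alpha n^2-1}$; you simply factor this as $\P(K=1)\cdot\P(\text{survivor's count in range}\mid K=1)$, whereas the paper keeps it as one product. One small remark: your invocation of the Abelian property is unnecessary here, since in the definition of $E_2$ the pioneers are released as free BRWs with freezing only (no settling), so independence is immediate from the BRW construction.
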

\begin{proof}
We start by noticing that  each one of the $\alpha n^2$ BRWs moves independently with no settling up to extinction or reaching distance $\e n $ from its starting point. It follows that if $\{ \ell_{B_{\e n }}^{(k)} : 1\leq k \leq \alpha n^2 \} $ denote i.i.d.\ copies of $\ell_{B_{\e n }}^0 ( \partial B_{\e n } ) $, we have to show that 
	\[ \begin{split} 
	\P (E_2 | E_1 ) & = \P \bigg( \bigcup_{k=1}^{\alpha n^2} \{ \ell^{(k)}_{B_{\e n }} \in [\alpha n^2 , \beta n^2 ] \} \cap \bigg\{ \bigcap_{j=1,\, j\neq k}^{\alpha n^2} \{  \ell^{(j)}_{B_{\e n }} =0 \} \bigg\} \bigg) 
	\\ & = \alpha n^2 \cdot  \P \Big( \ell_{B_{\e n }}^0 (\partial B_{\e n }) \in [\alpha n^2 , \beta n^2 ] \Big) 
	\cdot \P \Big(  \ell_{B_{\e n }}^0(\partial B_{\e n } )  =0\Big) ^{\alpha n^2 -1} 
	\end{split} 
	\] 
exceeds $c_2$. Indeed, 
by Proposition \ref{le:distance} we find 
	\[ \P \Big(  \ell_{B_{\e n }}^0(\partial B_{\e n } )  =0\Big) ^{\alpha n^2 -1} 
	\geq \Big( 1 - \frac{C}{(\e n )^2} \Big)^{\alpha n^2 -1} 
	\geq c ( \e , \alpha ) \]
for some constant $c(\e , \alpha ) >0$. Moreover, 
	\[ \begin{split} 
	 \P \Big( \ell_{B_{\e n }}^0 (\partial B_{\e n }) \in [\alpha n^2 , \beta n^2 ] \Big) 
	& = \underbrace{\P \Big( \ell_{B_{\e n }}^0 (\partial B_{\e n }) \in [\alpha n^2 , \beta n^2 ] \Big| 
	\ell_{B_{\e n }}^0 (\partial B_{\e n }) > 0 \Big)}_{\geq 1 - c (\alpha , \beta )/n^2 } \cdot 
	\underbrace{ \P \Big( \ell_{B_{\e n }}^0 (\partial B_{\e n } ) >0 \Big) }_{\geq c/(\e n )^2 }
	\\ & \geq \alpha n^2 \cdot \Big(1 - \frac{c (\alpha , \beta )}{n^2} \Big) \cdot \frac{c}{\e^2 n^2 } \geq 
	 \frac{c\alpha}{2\e^2} 
	\end{split} 
	\]
for any $\alpha >0$ and for $\beta $ large enough (larger than an absolute constant) by Corollary \ref{cor:conditioned} and  Proposition \ref{le:distance}. This concludes the proof. 
\end{proof}

\begin{Lemma}\label{le3dim2}
For any $\alpha  >0$ there exists $\gamma = \gamma  (\alpha)$ such that  $\P (E_3 | E_1 \cap E_2  ) \geq c_3$ for some positive constant $c_3 = c_3 ( \alpha , \gamma )$. 
\end{Lemma}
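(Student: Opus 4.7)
The plan is to verify the two clauses defining $E_3$ separately and combine them via a union bound. Conditional on $E_1\cap E_2$, write $\eta$ for the configuration of $\alpha n^2$ particles on $\partial B_{\e n}(x)$ and set $K:=B_{5\e n}(x)$. By the strong Markov property of BRW, the joint evolution of the particles in $\eta$ under this conditioning agrees with their unconditional law, so it suffices to prove the corresponding unconditional estimate.

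The easy half controls the total frozen mass on $\partial K$: criticality makes $\sum_w\cS^\eta_K(w)$ a martingale along the toppling sequence, so $\E[\sum_w\cS^\eta_K(w)]\le|\eta|=\alpha n^2$, and by Markov's inequality $\P(\sum_{w\in\partial K}\cS^\eta_K(w)>\gamma n^2)\le\alpha/\gamma$, which is as small as we wish for $\gamma=\gamma(\alpha)$ large. The other half, that $z\in\partial K$ is added to the cluster, reduces to showing $\P(\cS^\eta_K(z)\ge 1)\ge c>0$: a single frozen particle at $z$ suffices, since each toppling of $z$ decreases the count there by exactly one. I would apply the ghost-particle inequality \eqref{startingpoint} with this $\eta$, $K$ and $z$; the required inputs in $d=2$ are the first-moment estimates $\E[\ell^\eta_K(z)]\asymp\alpha n/\e$ and $\E[\ell^K_K(z)]\asymp\e n$, both following from the 2D Poisson-kernel estimate $G_K(y,z)\asymp 1/(\e n)$ valid for $y$ in the bulk of $K$ (which contains every $y\in\eta$, since $\|y-z\|\asymp 5\e n$), together with the uniform second-moment bound $\sum_{y\in\eta}\E[(\bar\ell^y_K(z))^2]\le C\alpha n^2$, obtained from Lemma \ref{lem-l2} combined with (a bulk-starting-point variant of) Lemma \ref{lem-2m-recurrent}. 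Plugging these into \eqref{startingpoint} with $\lambda=\delta/n$ and noting that $\lambda\E[\ell^y_K(z)]\to 0$, the exponent becomes $-\delta\alpha/\e+O(\delta\e)+O(\delta^2\alpha)$; optimising over small $\delta$ yields an exponent bounded above by $-c(\alpha)/\e^2$ once $\e$ is small enough relative to $\alpha$. Thus $\P(\cS^\eta_K(z)\ge 1)\ge 1-\exp(-c(\alpha)/\e^2)=:c_{\mathrm{in}}>0$, and choosing $\gamma=\gamma(\alpha)$ so that $\alpha/\gamma\le c_{\mathrm{in}}/2$ yields $\P(E_3\mid E_1\cap E_2)\ge c_{\mathrm{in}}/2=:c_3$.

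The delicate technical step is the uniform second-moment control in dimension two, where the SRW is recurrent and $G_K$ is only logarithmically bounded. Lemma \ref{lem-2m-recurrent} is stated for a BRW started at the centre of the ball; extending it to starting points $y$ in the bulk of $K$ uses the same Riemann-integral argument, the decisive observation being that for $y$ at distance $\asymp R=5\e n$ from $z\in\partial K$ the weight $G_K(y,w)\asymp\log(R/(\|y-w\|\vee 1))$ is small precisely where $G_K^2(w,z)$ is large (i.e.\ near $z$), so the integral remains a finite absolute constant. A minor secondary subtlety, handled by Abelianness and monotonicity, is that the partial stabilization strictly starts from $\mathbf 1_{A\cap K}+\eta$ rather than $\eta$ alone; the $\mathbf 1_{A\cap K}$ part contributes a term of size $\e n$ to $\E[\ell^K_K(z)]$ already present in the ghost side of \eqref{startingpoint}, and does not alter the dominant $\alpha n/\e$ scale of the gap driving the estimate.
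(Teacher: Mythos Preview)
Your proof is correct and follows essentially the same approach as the paper: a union bound splitting $E_3$ into the hitting event $\{\cS^\eta_K(z)\ge 1\}$ and the mass-control event $\{\cS^\eta_K(\partial K)\le\gamma n^2\}$, Markov's inequality with criticality for the latter, and the ghost-particle bound \eqref{startingpoint} together with Lemmas~\ref{lem-l2} and~\ref{lem-2m-recurrent} for the former, optimised at $\lambda\asymp 1/n$ to produce the $e^{-c\alpha/\e^2}$ decay. You are in fact more explicit than the paper on two points it glosses over: that Lemma~\ref{lem-2m-recurrent} is stated only for the centre and must be extended to bulk starting points (the paper invokes it directly for $y\in\partial B_{\e n}$ without comment), and that the stabilisation really starts from $\mathbf 1_{A\cap K}+\eta$ rather than $\eta$ alone, which you correctly argue is harmless for the ghost inequality.
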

\begin{proof} 
Recall from Section \ref{sec:domain} 
that, if $\eta$ is a particle configuration, for any given $z \in \partial B_{5\e n }(0)$ we denote by $\mathscr{S}^\eta_{B_{5\e n }} (z) $ the number of particles frozen at $z$ in the BIDLA stabilization of $\eta$ with freezing on $\partial B_{5\e n }$. Thus, if $\eta$ denotes an arbitrary configuration of $\alpha n^2$ particles supported on $\partial B_{\e n }(0)$, we have that 
	\[ \P (E_3 | E_1 \cap E_2 ) \geq 
	\P \Big( \Big\{ \mathscr{S}^\eta_{B_{5\e n }} (z) \geq 1 \Big\} \cap 
	\Big\{ \mathscr{S}^\eta_{B_{5\e n }} \big( \partial B_{5\e n } \big)  \leq \gamma n^2 \Big\} \Big ) . 
	 \]
It therefore suffices to show that the latter probability is larger than $c_3 >0$. To this end, we prove that 
	\begin{equation}\label{sum_dim2}
	 \P \Big(  \mathscr{S}^\eta_{B_{5\e n }} (z) =0 \Big) + 
	\P \Big(  \mathscr{S}^\eta_{B_{5\e n }} \big( \partial B_{5\e n } \big) > \gamma n^2 \Big )
	<1 
	\end{equation} 
by showing that the first probability in the right hand side is $<1$, while the second one can be made arbitrarily small by choosing $\gamma$ large enough, depending on $\alpha$. 

Recall the definition of $\sigma^2 $ from \eqref{assumption}. As a first step, we claim that 
	\begin{equation}\label{step1dim2}
	 \P \big(  \mathscr{S}^\eta_{B_{5\e n }} (z) =0 \big)  
	 \leq e^{ -C' \alpha / \varepsilon^2 } 
	 \end{equation}
for some explicit constant $C' = C' (\sigma^2)$ depending only on $\sigma^2$. 
Indeed, reasoning as in Section \ref{sec:classical}, by \eqref{startingpoint} with $R = 5\e n $ and any $\lambda >0$ we have that 
	\[  	 \mathbb{P}(\cS^\eta_{B_R}(z)=0)  
	\leq 
	\exp \bigg( 
	- \lambda \mathbb{E}  \big[\ell^{\eta}_{B_R}(z)-\ell^{B_R}_{B_R}(z)\big] 
	 + \frac{\lambda^2}{2} \sum_{y \in \eta } 
	\mathbb{E} \big[ \big( \bar \ell^{y}_{B_R}(z)\big)^2\big] \cdot  e^{\lambda \mathbb{E} [  \ell^y_{ B_{R}  }(z)  ]  } \bigg) . 
	\]
Now, for any $y \in \eta \subseteq B_{R/2}$ we can bound $ 
		 \mathbb{E} \big[  \ell^y_{ B_{R}  }(z)  \big ] 
		 = G_{R} (y,z) \leq C/R $ 
for some absolute constant $C<\infty$ (cf.\ \cite{asselah2013logarithmic}, Lemma 3.4). Moreover, 
by Lemma \ref{lem-2m-recurrent}, 
	\[ \sup_{y\in \eta } 
	\mathbb{E} \big[ \big( \bar \ell^{y}_{B_R}(z)\big)^2\big]
	\leq 
	\sigma^2 \sup_{y\in \eta } \sum_{ w\in B_{R}} G_{R}(y,w) ( G_{R}(w,z))^2
	\leq C' 
	\] 
with $C' = C'(\sigma ) = 5  C_2 \sigma^2$ with $C_2$ as in  \eqref{green-4}. 
We thus  end up with 
	\[  \prod_{y \in \eta } 
 	\mathbb{E}   \big[ e^{  - \lambda \bar \ell^{y}_{B_R}(z)  } \big ] 
  	\leq
	\exp\Big( \alpha n^2  \frac{C'}{2}   \lambda^2 e^{\lambda C/R} \Big) .  
	\]
Plugging this into \eqref{eq:Nzdim2}, together with 
	\[ \mathbb{E}\big[\ell^{\eta}_{B_R}(z) \big] 
	\geq 
	 \alpha n^2 \inf_{y \in \eta } G_{R}(y,z) 
	 \geq \alpha n^2 \frac{c}{R} = \frac{\alpha c n}{5\e} \]
with $c$ absolute constant, and 
	\[ \E \big[ \ell^{B_R}_{B_R}(z)\big] 
	 = \sum_{y \in B_{R} } G_{R}(y,z) 
	\leq c' R \, 
	\leq \frac 12 \mathbb{E}\big[\ell^{\eta}_{B_R}(z) \big] 
	\]
for $\e$ small enough, 
we conclude that 
	\[ \mathbb{P}(\cS^\eta_{B_R}(z)=0) 
	\leq 
	\exp\Big( \alpha n^2  \frac{C'}{2}   \lambda^2 e^{\lambda C/R} 
	- \lambda \frac{\alpha cn}{5 \e } 
	\Big)
	= \exp\Big\{ \lambda \alpha n 
	\Big( \frac {C'} 2 n \lambda  e^{\lambda C/R} - \frac c {5\e} \Big) 
	\Big\} 
	. \]
Take $\lambda = \delta /n $ with $\delta  = c / (10 \e C' ) $  to get that 
	\[ \mathbb{P}(\cS^\eta_{B_R}(z)=0) 
	\leq e^{- \alpha \delta c / (10 \e ) } 
	\leq e^{ -  C'' \alpha /\e^2   }   \]
for $C''$ absolute constant. 
This proves \eqref{step1dim2}.

We now show that 
	\begin{equation}\label{step2dim2}
	\P \Big(  \mathscr{S}^\eta_{B_{R}} \big( \partial B_{R} \big) > \gamma n^2 \Big )
	\leq \frac{\alpha}{2\gamma} . 
	\end{equation} 
To this end, note that 
	\[ \E \big[  \mathscr{S}^\eta_{B_{R}} \big( \partial B_{R} ) \big]
	\leq \alpha n^2 \sup_{y \in \eta  } \E \big[ \ell^y_{B_{R}} ( \partial B_{R}) \big] 
	= \alpha n^2    \]
since $\E \big[ \ell^y_{B_{R}} ( \partial B_{R}) \big] =1$ for all $y \in \eta $. 
It follows that 
	\[ \P \big(  \mathscr{S}^\eta_{B_{R}} \big( \partial B_{R} \big) > \gamma n^2  \big)	
	\leq 
	\frac{ \E \big[  \mathscr{S}^\eta_{B_{R}} \big( \partial B_{R} ) \big] }{\gamma n^2} 
	\leq   \frac{\alpha }{\gamma } .  \]
Plugging \eqref{step1dim2} and \eqref{step2dim2} into \eqref{sum_dim2}, and recalling that $R = 5\e n $,  we end up with  
	\[ \P \Big(  \mathscr{S}^\eta_{B_{5\e n }} (z) =0 \Big) + 
	\P \Big(  \mathscr{S}^\eta_{B_{5\e n }} \big( \partial B_{5\e n } \big) > \gamma n^2 \Big )
	\leq 
	 e^{- C' \alpha / \varepsilon^2  }   + \frac{\alpha }{\gamma } , \]
which can be made  strictly smaller than $1$ for any $\alpha >0$ by taking  $\gamma $ large enough. 
\end{proof}

\begin{Lemma}\label{le4dim2}
For any $\beta, \gamma , \e  >0$ there exists a positive constant $c_4 = c_4 ( \beta , \gamma , \e )$ such that   $\P (E_4 | E_1 \cap E_2 \cap E_3 ) \geq c_4$. 
\end{Lemma}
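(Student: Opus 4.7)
The plan is to reduce $E_4$ to an independence bound on the survival radii of independent critical branching random walks. On the event $E_1\cap E_2\cap E_3$, the total number of particles left frozen at the three boundaries $\partial B_{n(1-2\e)}$, $\partial B_{\e n}(x)$, and $\partial B_{5\e n}(x)$ is at most
\begin{equation*}
N := 2(\beta-\alpha)n^2 + \gamma n^2 \leq (\gamma+2\beta)n^2.
\end{equation*}
By the Abelian property discussed in Section \ref{sec:BIDLA} and in Appendix \ref{app:Abelian}, the order in which we subsequently topple these frozen particles is immaterial for the distribution of the final cluster, so I may release them one at a time and attach to each an independent BRW starting at its (random) frozen location. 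Since the descendants of distinct particles correspond to disjoint sub-trees of the driving Galton--Watson forest, the branching property ensures that these continued BRWs are mutually independent, and their joint law conditional on the $\sigma$-algebra of the previous phases is that of $N'\leq N$ independent critical BRWs started at the frozen positions.

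Next, I observe that $E_4$ contains the event that each such BRW, \emph{ignoring settling}, remains inside the Euclidean ball of radius $\e n$ centered at its own starting point. Indeed, settling can only kill particles, hence can only truncate the Galton--Watson tree and reduce the set of visited sites; the no-settling trajectory therefore dominates the actual BIDLA trajectory pointwise. For a fixed starting location $y$, translation invariance and the upper bound in Proposition \ref{le:distance} (applied with the origin placed at $y$, which is then trivially in $B_{\e n/2}$) yield
\begin{equation*}
\P\big(\ell^y_{B_{\e n}(y)}(\partial B_{\e n}(y))>0\big) \leq \frac{C}{(\e n)^2}.
\end{equation*}

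Combining independence of the released BRWs with the monotonicity $(1-p)^{N'}\geq (1-p)^{N}$ valid for $N'\leq N$ and $p\in[0,1]$, I then obtain
\begin{equation*}
\P(E_4\mid E_1\cap E_2\cap E_3) \;\geq\; \Big(1-\tfrac{C}{(\e n)^2}\Big)^{N} \;\geq\; \exp\!\Big(-\tfrac{2C(\gamma+2\beta)}{\e^2}\Big)
\end{equation*}
for all $n$ large enough, which is a strictly positive constant $c_4(\beta,\gamma,\e)$ independent of the cluster $A(t-1)$ and of the specific choice of $x$ and $z$. I do not expect any serious obstacle here: the only mildly delicate point is the invocation of conditional independence of the final BRWs from the event $E_1\cap E_2\cap E_3$, and this is handled by appealing to the Abelian property iteratively, in the same manner as in the three preceding lemmas.
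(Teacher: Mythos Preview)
Your proof is correct and follows essentially the same approach as the paper: bound the number of remaining frozen particles by $O(n^2)$, observe that their continuations are independent critical BRWs, and use Proposition~\ref{le:distance} to show each dies within radius $\e n$ with probability $1-O((\e n)^{-2})$, so that the product over $O(n^2)$ particles is bounded below by a positive constant. Your accounting is in fact slightly more careful than the paper's, since you also include the at most $(\beta-\alpha)n^2$ leftover particles on $\partial B_{\e n}(x)$ (the paper's definition of $E_4$ only mentions those on $\partial B_{5\e n}(x)$ and $\partial B_{n(1-2\e)}$), and you make the monotonicity under settling explicit.
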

\begin{proof}
Note that, since there are at most $(\beta - \alpha + \gamma ) n^2 $ particles that move independently up to distance $\e n $ from their starting locations, 
	\[ \P ( E_4 | E_1 \cap E_2 \cap E_3 ) \geq
	\P \Big( \ell^0_{B_{5\e n } } ( \partial B_{5\e n } ) =0 \Big)^{ (\beta  + \gamma ) n^2 }, \]
so it suffices to show that the latter probability is larger than $c_4$. Indeed, by Proposition \ref{le:distance} we have that 
	\[ \P \Big( \ell^0_{B_{5\e n } } ( \partial B_{5\e n } ) =0 \Big) \geq 1- \frac{C}{(\e n )^2} 
	\] 
for some absolute constant $C>0$. Thus we find that 
	\[ \P \Big( \ell^0_{B_{5\e n } } ( \partial B_{5\e n } ) =0 \Big)^{ (\beta  + \gamma ) n^2 } \geq 
	\Big( 1- \frac{C}{(\e n) ^2} \Big) ^{ (\beta + \gamma ) n^2 }
	\geq  c_4 (\beta , \gamma , \e ) >0 , \]
as wanted. 
\end{proof}

We can now prove that, once in a symmetric state, the (time-changed) BIDLA process becomes asymmetric in one step with uniformly positive probability. 
\begin{proof}[Proof of Proposition \ref{prop:breakdim2}] 
Let $A  \in \mathcal{S}_\varepsilon $ be any $\varepsilon$-symmetric cluster, and write  $\pi n^2 := |A|$ for the volume of $A$. 
Choose $\alpha >0$ arbitrary and $\beta >0$ large enough so that 
	\[ \P (E_1 \cap E_2 ) \geq c_1 ( \alpha , \beta )\cdot c_2 (\alpha , \beta )  >0 \]
by Lemmas \ref{le1dim2} and \ref{le2dim2} above. Now choose $\gamma = \gamma (\alpha ) $ as in Lemma \ref{le3dim2} above so that 
	\[ \P (E_3 | E_1 \cap E_2 ) \geq c_3 ( \alpha , \gamma ) > 0 . \]
Finally, for these choices of $\alpha , \beta , \gamma$  by Lemma \ref{le4dim2} 
	\[ \P ( E_4 | E_1 \cap E_2 \cap E_3  ) \geq c_4 ( \beta , \gamma , \e ) >0 ,  \]
which concludes the proof. 
\end{proof}

\section{The inner bound for $d>2$}\label{sec:innerbound}
In this section we prove that, when releasing $|B_n |  $ BRWs with settling on $\mathbb{Z}^d$ with $d>2$, the resulting particle configuration covers the ball $B_R$ for some $R = n - n^\alpha$ with high probability. Here the constant $\alpha <1$ is to be chosen later as small as possible. 

To see this, we follow the classical approach to the  inner bound sketched in Section \ref{sec:classical}, together with necessary second moment bounds on the local time of critical BRW inside Euclidean balls.

\subsection{Proof of the inner bound}
\begin{Proposition}\label{prop:lowerbound}
Suppose that $d > 2$, and let $\eta := |B_n| \1_{0}$. There exists $ c_d>0$  such that,
for each  any $\alpha>1/2$ and $n \in \mathbb{N}$,
$$
\mathbb{P}\big( B_{n - n^\alpha } \not\subseteq  \mathscr{S} (\eta )
\big ) \leq  n^{d}\cdot \exp \Big(  - c _d\cdot n^{2\alpha-1}\Big(\frac{\1_{d=3}}{\log n}+\1_{d>3}\Big)\Big).
$$
\end{Proposition}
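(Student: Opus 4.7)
The plan is to follow the classical approach presented in Section~\ref{sec:classical}. By the Abelian property we may first partially stabilise $\eta = |B_n|\1_0$ inside $B_R$ with $R = n - n^\alpha$, freezing particles that reach $\partial B_R$. Since a site once filled is never unfilled, $\{\cS^\eta_{B_R}(z) = 1\} \subseteq \{z \in \mathscr{S}(\eta)\}$, and a union bound gives
\[
\P\big(B_{n-n^\alpha} \not\subseteq \mathscr{S}(\eta)\big) \leq \sum_{z \in B_{n-n^\alpha}} \P\big(\cS^\eta_{B_R}(z) = 0\big) \leq |B_{n-n^\alpha}| \cdot \max_{z} \P\big(\cS^\eta_{B_R}(z) = 0\big).
\]
Each probability will be estimated via inequality~\eqref{startingpoint}, which balances a first-order (drift) term against a second-order (variance) term, and the main task is to show that the former dominates the latter with a comfortable margin.

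For the drift, since all particles start at the origin, $\E[\ell^\eta_{B_R}(z)] = |B_n| G_R(0, z)$, whereas $\E[\ell^{B_R}_{B_R}(z)] = \sum_{y \in B_R} G_R(y, z) = |B_R| G_R(0,z) + O(1)$ by Lemma~\ref{lem-Harm}. Hence the drift reduces to $(|B_n| - |B_R|) G_R(0,z) + O(1)$. As $|B_n| - |B_R| = \Theta(n^{d-1+\alpha})$ for $\alpha < 1$, and the hardest case is $z$ close to $\partial B_R$ where $G_R(0,z) \sim R^{1-d}$ by Lemma~\ref{lem-rasemote}, the drift is of order at least $n^\alpha$. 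Interior points benefit from larger Green's functions and therefore do not constrain the bound.

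For the variance, I would bound $\E[(\bar\ell^0_{B_R}(z))^2] \leq \E[(\ell^0_{B_R}(z))^2]$ via Lemma~\ref{lem-l2} (and the subsequent remark to cover $z \in B_R$), reducing matters to estimating $\sum_y G_R(0,y) G_R(y,z)^2$. By Lemma~\ref{lem-2moment}, this sum is of order $R^{1-d}$, up to a $\log R$ factor in dimension three. Summed over the $|B_n|$ copies of the BRW starting at the origin, the variance contribution is therefore of order $n \cdot (\log n \cdot \1_{d=3} + \1_{d\geq 4})$. Choosing
\[
\lambda := c_0 \cdot \frac{n^{\alpha - 1}}{\log n \cdot \1_{d=3} + \1_{d\geq 4}}
\]
with $c_0$ small keeps $e^{\lambda G_R(0,z)} \leq 2$ uniformly in $z \in B_R$ (since $\lambda \cdot \max_z G_R(0,z)$ is easily checked to be $o(1)$), and balances the two terms in~\eqref{startingpoint}, yielding
\[
\P\big(\cS^\eta_{B_R}(z) = 0\big) \leq \exp\left( -c \cdot \frac{n^{2\alpha - 1}}{\log n \cdot \1_{d=3} + \1_{d\geq 4}} \right).
\]
The union bound over the $O(n^d)$ points of $B_{n-n^\alpha}$ then completes the proof.

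The main obstacle is securing the uniform variance bound, and in particular the second moment estimates for the local time of the BRW \emph{on} and \emph{near} $\partial B_R$: the branching makes $\E[(\ell^0_{B_R}(z))^2]$ strictly larger than in the classical IDLA setting, and the naive bound $\E[\ell^0_{B_R}(z)]^2$ is too weak. The Green's function inputs developed in Section~\ref{sec:Green}, in particular Lemma~\ref{lem-rasemote} and Lemma~\ref{lem-2moment}, are precisely tailored for this, and the logarithmic loss in dimension three is an inevitable remnant of the borderline transience of the simple random walk in that case.
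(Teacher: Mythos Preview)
Your overall strategy matches the paper's, and the drift-versus-variance balance via \eqref{startingpoint} is exactly right. However, there is a gap in your handling of interior points. You fix $R = n - n^\alpha$ once and for all and then attempt to bound $\P(\cS^\eta_{B_R}(z) = 0)$ uniformly over $z \in B_R$, invoking Lemma~\ref{lem-Harm} for the drift and Lemma~\ref{lem-2moment} for the variance. But both lemmas are stated only for $z \in \partial B_R$: Lemma~\ref{lem-Harm} gives the approximate mean-value identity $|B_R|\,G_R(0,z) \approx \sum_{y\in B_R} G_R(y,z)$ only at boundary points, and Lemma~\ref{lem-2moment} bounds $\sum_y G_R(0,y)G_R^2(y,z)$ only for $z \in \partial B_R$. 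For interior $z$ neither applies as written, and your assertion that the sum ``is of order $R^{1-d}$'' is not what the lemma yields when $\|z\| \ll R$. Your remark that ``interior points benefit from larger Green's functions'' is the right intuition, but turning it into a proof would require separate estimates on both the drift and the variance for bulk $z$, together with a check that your single choice of $\lambda$ remains favourable there.

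The paper sidesteps all of this by letting the stabilisation radius depend on the target point: it writes $B_{n-n^\alpha} = \bigcup_{R \le n - n^\alpha} \partial B_R$ and, for each $z \in \partial B_R$, uses that very $R$ in the bound $\P(\mathscr{S}^\eta(z) = 0) \le \P(\cS^\eta_{B_R}(z) = 0)$. This places every $z$ on the boundary of its own ball, so Lemmas~\ref{lem-Harm} and~\ref{lem-2moment} apply directly, and one only needs the uniform estimate $\sup_{R\le n-n^\alpha}\sup_{z\in\partial B_R}\P(\cS^\eta_{B_R}(z)=0)$, which your drift and variance computations (now restricted to boundary $z$) handle correctly. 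This one-line change repairs your argument and is in fact the paper's route.
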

\begin{proof} 
It suffices to prove the statement for $n$ large enough. 
Note that  
	\[ \mathbb{P}\big( B_{n - n^\alpha } \not\subseteq  \mathscr{S} (\eta ) \big) 
	\leq \sum_{z \in B_{n-n^\alpha } } \mathbb{P}\big( \mathscr{S}^\eta (z) = 0 \big) 
	= \sum_{R=1}^{n-n^\alpha } \sum_{z \in \partial B_R} \P \big( \mathscr{S}^\eta (z) = 0 \big) 
	\]
and $\P ( \mathscr{S}^\eta (z) = 0  ) \leq \P ( \mathscr{S}_{B_R}^\eta (z) = 0 )$. We now show that   
\begin{equation}\label{key-covering}
\sup_{R\le n-n^\alpha} \sup_{z\in \partial B_R} \mathbb{P}(\cS^{\eta }_{B_R}(z)=0) \le
\exp \Big(  - c _d\cdot n^{2\alpha-1}\Big(\frac{\1_{d=3}}{\log n}+\1_{d>3}\Big)\Big).
\end{equation}
To see this, take $R\leq n-n^\alpha$ arbitrary. Then by \eqref{startingpoint} 
with $\eta = |B_n| \1_0 $, using that $|B_n| \leq 2 \omega_d  n^d$,  
we have 
	\begin{equation}\label{newstarting}
	 \mathbb{P}(\cS^\eta_{B_R}(z)=0)  
	\leq 
	\exp \bigg( 
	- \lambda \mathbb{E}  \big[\ell^{\eta}_{B_R}(z)-\ell^{B_R}_{B_R}(z)\big] +
	 \lambda^2 \omega_d n^d \cdot 
	\mathbb{E} \big[ \big( \bar \ell^{0}_{B_R}(z)\big)^2\big] \cdot  e^{\lambda \mathbb{E} [  \ell^0_{ B_{R}  }(z)  ]  } \bigg)  
	\end{equation} 
for any $z \in \partial B_R$.

For the first term we  find 
	\begin{equation}\label{firstpart}
	 \begin{split} 
	\mathbb{E}  \big[\ell^{\eta}_{B_R}(z)-\ell^{B_R}_{B_R}(z)\big] 
	& = |B_n|  G_R(0,z) - \sum_{y\in B_R} G_R(y,z) 
	\\ &  = ( |B_n| - |B_R| ) G_R (0,z) + 
	 \Big( |B_R| \cdot G_R(0,z) - \sum_{y\in B_R} G_R(y,z) \Big) 
	\\ & \geq  ( n^d - (n-n^\alpha )^d ) \cdot \frac{c}{ R^{d-1}}   - C
	\geq \kappa_d n^\alpha  , 
	\end{split} 
	\end{equation}
where the first inequality follows from Lemma~\ref{lem-Harm} and the fact that $G_R(0,z)\ge c   R^{1-d}$, while the last one holds for some constant $\kappa_d$ depending only on the dimension. 

For the second term we have that 
$ \mathbb{E} [  \ell^0_{ B_{R}  }(z)  ] = G_R(0,z) \leq C / R^{d-1} $ while, 
by Lemmas \ref{lem-l2} and \ref{lem-2moment}, 
	\begin{equation}\label{secondpart}
	\begin{split} 
	\mathbb{E} \big[ \big( \bar \ell^{0}_{B_R}(z)\big)^2\big] 
	& \leq 
	G_R(0,z) + \sigma^2 \sum_{y \in B_R } G_R(0,y) G_R^2 (y,z) 
	\\ & \leq 
	C_{d , \sigma^2} \cdot \frac{1}{R^{d-1}} \cdot \Big( \1_{d=3} \log R + \1_{d>3} \Big) 
	\end{split} 
	\end{equation}
for some constant $C_{d,\sigma^2}$ depending only on the dimension $d$ and the offspring variance $\sigma^2$. 
Plugging \eqref{firstpart} and \eqref{secondpart} into \eqref{newstarting} we end up with 
	\[ \sup_{R \leq n-n^\alpha } \mathbb{P}(\cS^\eta_{B_R}(z)=0)  
	\leq 
	\exp \bigg( - \lambda \kappa_d n^\alpha 
	+ \lambda^2  C_{d , \sigma^2} \cdot n  \cdot \Big( \1_{d=3} \log n + \1_{d>3} \Big) \bigg) .  \]
Optimising over $\lambda$ yields the desired result. 
\end{proof}  

Along the same lines one can prove the inner bound stated in Lemma \ref{lem-covering}, dealing with a more general initial configuration. As the lemma will not be used in the remainder of the paper, we sketch the argument and leave the details to the reader. 

\begin{proof}[Proof of Lemma~\ref{lem-covering}]
The lower bound relies on the scenario that no branching random walk starting inside $B_{n/2}$ reaches  $\partial B_n$ before extinction, which has probability at least  
\[
\mathbb{P}(\text{out of $|\eta | $  branching random walks none reaches }\partial B_n)\ge
\Big( 1-\frac{C}{n^2} \Big)^{\alpha\cdot 2 \omega_d n^d}
\]
by Proposition \ref{le:distance}, since $|\eta | \leq \alpha \cdot |B_n| = \alpha \cdot 2\omega_d  n^d $. 

For the upper bound take $\eta$ supported in $B_{n/2}$ with $|\eta | \geq \alpha |B_n| $. Reasoning as in  the proof of Proposition \ref{prop:lowerbound} above, one shows  that 
\begin{equation}\label{eq:cover-1}
\sup_{2n/3\le R\le n} \sup_{z\in \partial B_R} \mathbb{P}(\cS^\eta_{B_R}(z)=0) \le
 \exp\Big(-C_d \frac{\alpha\cdot n}{\log n\cdot \1_{d=3}+\1_{d>3}}\Big).
\end{equation}
This proves that when releasing BIDLA particles from $\eta$ with freezing on $\partial B_n$, the annulus $B_n \setminus B_{2n/3}$ is covered with overwhelming probability. It remains to exclude holes inside $B_{2n/3}$. 
To this end, we could iterate this procedure $k$ times, for some large integer $k \geq \alpha$, 
so as to cover each site of $B_n\setminus B_{2n/3}$ with $k$ individuals. 
By the same argument, stabilising this configuration will result in covering the ball $B_{2n/3}$, with overwhelming probability. 
\end{proof}

\subsection{Number of particles frozen on $\partial B_n$}
\label{sect:controllingthenumber}
We proved in the previous section that releasing $ |B_n| $ particles from the origin according to BIDLA dynamics one has that  the event 
	\[\cE_n(\alpha):= \{B_{n-n^\alpha }\subseteq \mathscr{S}^\eta_{B_n}  \}\] 
holds with high probability as soon as $\alpha>1/2$. This section focuses on bounding the number of particles, or \emph{pioneers}, frozen on $\partial B_n$. More precisely, we 
 show that on the good event $\cE_n(\alpha ) $ there are at most 
 $O(n^{d-1+\alpha})$ frozen particles on  $ \partial B_n$. Note that, while  this type of bound would be trivial for classical IDLA, the creation of particles in BIDLA means that, in principle, the number of pioneers could be arbitrarily large. We show that this number  can be bounded by re-adapting the ghost particle approach in combination with the BIDLA inner bound proved in the previous section.

For an arbitrary particle configuration $\eta$ write 
	\[ \cS^{\eta}_{B_n}(\partial B_n)=\sum_{z\in \partial B_n}\cS^{\eta}_{B_n}(z) , 
	\qquad 
	\ell^\eta_{B_n} (\partial B_n)=\sum_{z\in \partial B_n}\ell^{\eta}_{B_n}(z) . 
	\]
Thus $\cS^{\eta}_{B_n}(\partial B_n)$ (respectively $\ell^\eta_{B_n} (\partial B_n)$) counts  the number of pioneers frozen on $\partial B_n$ when releasing BIDLA particles (respectively independent BRWs) from each location of $\eta$ 
 with  freezing on $\partial B_n$. 

Now take $\eta = |B_n| \1_0$. Setting  $\eta_G:=\cS^{\eta }_{B_n}\cap B_n$, as in \eqref{eq:centralequality} we  have 
\begin{equation}\label{eq:local-equality}
\cS^{\eta}_{B_n}(\partial B_n) +
\tilde \ell_{B_n}^{\eta_G}(\partial B_n) =\ell_{B_n}^{\eta }(\partial B_n) .
\end{equation}
Moreover,  on the good event $\cE_n(\alpha)$, 
\begin{equation}\label{eq:local-ineq}
\cS^{\eta }_{B_n}(\partial B_n) +
\tilde \ell_{B_n}^{B_R}(\partial B_n) \le
\ell_{B_n}^{\eta }(\partial B_n) , 
\end{equation}
where we have set   $R=n-n^\alpha$ for brevity. Notice that compared to \eqref{eq:centralinequality} now the inequality is reversed, which is a consequence of the event $\cE_n (\alpha )$ holding true. This provides an upper bound for the number of pioneers $\cS^{\eta }_{B_n}(\partial B_n) $ in terms of independent random variables, which can be used to deduce the following result. 
\begin{Proposition}\label{prop-standby}
Assume $d>2$ and take $\eta = |B_n| \1_0$. 
For any $\alpha>1/2$, and $\beta$ large enough, we have for some $\kappa>0$
and any $n$ large enough,
\begin{equation}\label{ineq-standby}
\mathbb{P}( \ \cS^{\eta }_{B_n}(\partial B_n)> \beta\cdot n^{d-1+\alpha})\le
n^{d}\cdot \exp \Big(  - c _d\cdot n^{2\alpha-1}\Big(\frac{\1_{d=3}}{\log n}+\1_{d>3}\Big)\Big).
\end{equation}
\end{Proposition}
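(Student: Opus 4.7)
The plan is to combine the inner bound of Proposition~\ref{prop:lowerbound} with a Chernoff concentration for the independent difference $X-Y$, where $X := \ell^\eta_{B_n}(\partial B_n)$ and $Y := \tilde\ell^{B_R}_{B_n}(\partial B_n)$ with $R := n-n^\alpha$. Since the ghosts are, by construction, independent BRWs with a fixed starting configuration that does not depend on the BIDLA randomness, one has $X \perp Y$, and the sandwich relation~\eqref{eq:local-ineq} gives $\cS^\eta_{B_n}(\partial B_n) \le X - Y$ on the good event $\cE_n(\alpha)$. Hence
\[
\P\bigl(\cS^\eta_{B_n}(\partial B_n) > \beta n^{d-1+\alpha}\bigr) \le \P\bigl(\cE_n(\alpha)^c\bigr) + \P\bigl(X - Y > \beta n^{d-1+\alpha}\bigr),
\]
and the first term is already controlled at the desired rate by Proposition~\ref{prop:lowerbound}. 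It therefore remains to bound the second term by a comparable (or stronger) quantity.

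Decomposing $X = \sum_{i=1}^{|B_n|} L^{(i)}$ and $Y = \sum_{y \in B_R} L_y$ displays both as sums of independent copies of $\ell^y_{B_n}(\partial B_n)$; each summand has mean $1$ (since a simple random walk exits $B_n$ almost surely), so $\E[X-Y] = |B_n| - |B_R| \sim \kappa_d n^{d-1+\alpha}$ for some dimensional constant $\kappa_d$. For $\beta > \kappa_d$ the target event is therefore a genuine upper-tail large deviation, and by independence the MGF factorizes, $\E[e^{\mu(X-Y)}] = \E[e^{\mu X}] \cdot \E[e^{-\mu Y}]$. The main obstacle is that the MGF bound of Proposition~\ref{prop:largedeviation}, namely $\E[e^{\lambda \ell^y_{B_n}(\partial B_n)/n^2}] \le e^{c\lambda/n^2}$, has linear coefficient $c > 1$; raising it to the power $|B_n|$ would then generate a spurious factor of size $\exp(C\mu n^d)$ that completely swamps the target signal $\mu n^{d-1+\alpha}$. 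Cancellation of the leading-order contributions from $X$ and $Y$ requires an MGF bound whose linear coefficient is \emph{exactly} the mean.

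To cure this I would sharpen the bound to
\[
\E\bigl[e^{\mu \,\ell^y_{B_n}(\partial B_n)}\bigr] \le \exp\bigl(\mu + A \mu^2 n^2\bigr), \qquad \mu \in (0, \mu_0/n^2],
\]
uniformly in $y \in B_R$ (and analogously for $-\mu$), so that the linear coefficient matches the true mean $\E[\ell^y_{B_n}(\partial B_n)] = 1$. This can be proved by splitting on whether the BRW from $y$ survives up to $\partial B_n$: multiplying the conditional MGF \eqref{eq:Amines estimate conditional} by the survival probability $\P(\ell^y_{B_n}(\partial B_n)>0)\le C/d_y^2$ (obtained by applying Proposition~\ref{le:distance} to the ball $B_{d_y}(y)$, where $d_y\ge n^\alpha$ is the distance from $y$ to $\partial B_n$), and comparing with the Taylor expansion to the second order, the linear term is pinned to the exact mean while the quadratic correction is controlled by a uniform second-moment estimate $\E[(\ell^y_{B_n}(\partial B_n))^2]\le Cn^2$ (derivable from Lemma~\ref{lem-l2} applied to a BRW restricted to $B_n$). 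Plugging this sharpened MGF into the factorization yields
\[
\E[e^{\mu(X-Y)}] \le \exp\bigl(\kappa_d \mu n^{d-1+\alpha} + C \mu^2 n^{d+2}\bigr),
\]
and Markov's inequality gives $\P(X - Y > \beta n^{d-1+\alpha}) \le \exp(-\mu(\beta - \kappa_d) n^{d-1+\alpha} + C \mu^2 n^{d+2})$. Optimizing over $\mu$ (the optimum lies in the admissible range $\mu \le \mu_0/n^2$ whenever $\alpha<1$ and $n$ is large) produces a bound of order $\exp(-c(\beta) n^{d-4+2\alpha})$, which for $d \ge 3$ and $\alpha > 1/2$ dominates the rate of Proposition~\ref{prop:lowerbound}. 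Taking $\beta > \kappa_d$ then concludes the proof.
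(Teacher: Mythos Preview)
Your overall strategy matches the paper's: split off $\P(\cE_n(\alpha)^c)$ via Proposition~\ref{prop:lowerbound}, then control the remaining piece by a Chernoff bound in which the centered MGF of $\ell^y_{B_n}(\partial B_n)$ enters quadratically. However, there is a genuine gap in your independence claim.

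You assert simultaneously that $X\perp Y$ and that~\eqref{eq:local-ineq} holds almost surely on $\cE_n(\alpha)$. These two statements cannot both hold in the same coupling. The inequality~\eqref{eq:local-ineq} comes from the identity $\cS^\eta_{B_n}(\partial B_n)+\tilde\ell^{\eta_G}_{B_n}(\partial B_n)=\ell^\eta_{B_n}(\partial B_n)$ together with $\tilde\ell^{B_R}_{B_n}\le\tilde\ell^{\eta_G}_{B_n}$ on $\cE_n(\alpha)$; here the ghosts are the \emph{continuations} of the full BRWs that constitute $X$. In that coupling, $Y=\tilde\ell^{B_R}_{B_n}(\partial B_n)$ is built from a subset of the very trajectories determining $X$, so $X$ and $Y$ are \emph{not} independent. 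If instead you take $Y$ to be a fresh, fully independent collection of BRWs from $B_R$, then $X\perp Y$ is fine but~\eqref{eq:local-ineq} no longer holds pointwise, and the deduction $\P(\cE_n(\alpha),\cS^\eta>\cdot)\le\P(X-Y>\cdot)$ is unjustified.

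The paper circumvents this by exploiting a weaker independence: $\tilde\ell^{B_R}_{B_n}(\partial B_n)$ is independent of the pair $(\cE_n(\alpha),\cS^\eta_{B_n}(\partial B_n))$, since the latter are measurable with respect to the BIDLA phase while the ghosts, conditionally on that phase (and on $\cE_n(\alpha)$), are fresh BRWs from $B_R$. One then multiplies and divides by $\E[e^{\lambda\tilde\ell^{B_R}_{B_n}(\partial B_n)}]$ to obtain
\[
\E\big[\1_{\cE_n(\alpha)}e^{\lambda\cS^\eta_{B_n}(\partial B_n)}\big]
\le \frac{\E\big[e^{\lambda\ell^\eta_{B_n}(\partial B_n)}\big]}{\E\big[e^{\lambda\tilde\ell^{B_R}_{B_n}(\partial B_n)}\big]},
\]
which after centering and Jensen's inequality in the denominator gives exactly the quadratic exponent you are aiming for. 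Note that the paper lands on the \emph{ratio} $\E[e^{\lambda X}]/\E[e^{\lambda Y}]$ rather than your product $\E[e^{\lambda X}]\E[e^{-\lambda Y}]$; the former is smaller by Jensen, but after centering both routes yield the same order $\exp(-c\,n^{d+2\alpha-4})$.

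Finally, your ``sharpened'' MGF bound $\E[e^{\mu\ell^y_{B_n}(\partial B_n)}]\le\exp(\mu+A\mu^2 n^2)$ is nothing other than the centered estimate~\eqref{eq:Amine_centered} rewritten with the mean $\E[\ell^y_{B_n}(\partial B_n)]=1$ reinstated; there is no need to re-derive it from~\eqref{eq:Amines estimate conditional} and a second-moment computation.
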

\begin{proof} With $\cE_n ( \alpha )$ defined as above, clearly 
\[
\mathbb{P}(\cS^{\eta }_{B_n}(\partial B_n)>\beta\cdot n^{d-1+\alpha}) \le
\mathbb{P}(\cE_n(\alpha), \cS^{\eta }_{B_n}(\partial B_n)>\beta\cdot n^{d-1+\alpha})
+\mathbb{P}(\cE_n(\alpha)^c) .
\]
The term $\mathbb{P}(\cE_n(\alpha)^c)$ is dealt with in Proposition~\ref{prop:lowerbound}, and will be the dominant term. For the other term use that 
$\cE_n(\alpha)$ and $\cS^{\eta}_{B_n}(\partial B_n)$ 
are independent from $\tilde \ell_{B_n}^{B_R}(\partial B_n)$ to obtain 
	\begin{equation}\label{eq:step10}
	\begin{split}
	\P (\cE_n(\alpha),\ \cS^{\eta }_{B_n}(\partial B_n)>\beta n^{d-1+\alpha })  
	& \leq 
	e^{-\lambda \beta\cdot n^{d-1+\alpha }}
	\frac{ \mathbb{E} [ \1_{\cE_n(\alpha)} 
	\cdot \exp( \lambda \cS^{\eta }_{B_n}(\partial B_n)) ]
	\cdot \mathbb{E}  [e^{ \lambda \ell_{B_n}^{B_R}(\partial B_n) } ]} 
	{ \mathbb{E} [\exp\big(\lambda  \ell_{B_n}^{B_R}(\partial B_n)\big)  ]} 
	 \\ &
	 \leq e^{-\lambda\cdot \beta\cdot n^{d-1+\alpha}}
	 \frac{ \mathbb{E} [\exp\big( \lambda  \ell_{B_n}^{\eta}(\partial B_n) \big) ]  }
	 {\mathbb{E}  \big[ \exp\big(\lambda \ell_{B_n}^{B_R}(\partial B_n)\big) \big]   } 
	 \\ & 
	 =  e^{-\lambda\cdot \beta\cdot n^{d-1+\alpha}}
 	\frac{ \mathbb{E} \big[ e^{ \lambda  \bar \ell_{B_n}^{\eta}(\partial B_n)  }  \big] }
 	{\mathbb{E}  \big[ e^{ \lambda  \bar  \ell_{B_n}^{B_R}(\partial B_n) }\big]  }
	e^{\lambda  \mathbb{E}[ \ell_{B_n}^{\eta}(\partial B_n)- \ell_{B_n}^{B_R}(\partial 		B_n)]}
	\\ & \leq  
	e^{-\lambda\cdot \beta\cdot n^{d-1+\alpha}} \Big( 
 \mathbb{E}   \Big[ e^{  \lambda\cdot \bar  \ell_{B_n}^{0}(\partial B_n)  } \Big ] 
 \Big)^{|B_n| } \, \, 
e^{\lambda\mathbb{E}[ \ell_{B_n}^{\eta}(\partial B_n)- \ell_{B_n}^{B_R}(\partial B_n)]}.
	 \end{split}
	\end{equation}
Here again  $\bar  \ell_{B_n}^{0}(\partial B_n) := \ell_{B_n}^{0}(\partial B_n)  - \E [ \ell_{B_n}^{0}(\partial B_n)  ] $. 
Write $\ell_n := \ell_{B_n}^{0}(\partial B_n) $ and $\bar \ell_n := \bar \ell_{B_n}^{0}(\partial B_n)  $ to lighten the notation. 

By 
\eqref{eq:Amine_centered} in Proposition \ref{prop:largedeviation} 
we have that,  for $\gamma $ small enough,  
	\begin{equation}\label{eq:step11}
	\mathbb{E} \Big[ \exp \Big( \gamma \bar \ell_n / n^2  \Big) \Big] 
	\le \exp \Big( \kappa_d \gamma^2 / n^2 \Big) 
	\end{equation}
for some constant $\kappa_d$ depending only on $d$. 
Moreover, since 
$ G_n(0, \partial B_n)=G_n(y, \partial B_n) = 1$ for any $y \in B_n$, 
	\[ \begin{split} 
	\mathbb{E}[ \ell_{B_n}^{\eta}(\partial B_n)- \ell_{B_n}^{B_R}(\partial B_n)]
	 & =  |B_n| G_n(0,\partial B_n)-\sum_{y\in B_R}\sum_{z\in \partial B_n} G_n(y,z)
	 \\ & =|B_n|-|B_R|=O(n^{d-1 + \alpha }).
	 \end{split} 
	\]
It follows by choosing   $\lambda=\gamma\cdot n^{-2}$ that 
	\begin{equation}\label{eq:step12}
	\mathbb{P}(\cE_n(\alpha),\ \cS^\eta_{B_n}(\partial B_n)>\beta\cdot n^{d-1+\alpha}) 
	\le
	\exp\big(- \gamma \beta n^{d-3+\alpha}+ 2 \gamma^2 \omega_d \kappa_d n^{d-2} +\frac{\gamma}{n^2} C_d n^{d-1 + \alpha } \Big).
	\end{equation}
Thus, if we choose now $\beta\ge 2 C_d$, and $\gamma$ proportional to $n^{\alpha-1}$, we gather that 
\begin{equation}\label{eq:step13}
\mathbb{P}(\cE_n(\alpha),\ \cS^\eta_{B_n}(\partial B_n)>\beta\cdot n^{d-1+\alpha}) \le 
\exp\Big( -\frac{\beta}{2}\cdot\gamma\cdot n^{d-3+\alpha}+ 2\gamma^2 \omega_d \kappa_d n^{d-2} \Big)
\le e^{-\kappa\cdot \beta\cdot n^{d-3+2\alpha-1}}
\end{equation}
for some $\kappa>0$, which concludes the proof. 
\end{proof} 

\section{The outer bound for $d>2$} \label{sec:outerbound}
In this section we provide an outer bound for the aggregate starting from
an arbitrary particle configuration.
By combining these results with those of
Section \ref{sect:controllingthenumber},
this 
 will lead to the proof of the shape theorem in dimension $d \geq 3$.
The main result of this section is Theorem \ref{thm:controlling} below.
Recall that  $\mathscr{S}(\eta)$ denotes the aggregate of the BIDLA
process starting from a general initial particle configuration $\eta : \Z^d  \to \mathbb{N}$. 
   \begin{Theorem}\label{thm:controlling}
 Let $d \geq 3$.   There  exists $\alpha_0>0$, $c >0$ and $N_0 < \infty$ such that 
 the following holds. 
 For any  $\alpha > \alpha_0$, $N> N_0$, 
  $
  R \leq \frac{1}{\alpha}  \sqrt{  \frac{\log N} {N} },
   $
  any particle configuration $\eta$ with support 
   $\partial B_{ R }$  and number of particles $N$, we have 
   $$
   \mathbb{P}  \Big ( \mathscr{S}(\eta) \not\subset B_{  R +  c \,    \alpha^2 \,  {N }^{\frac{1}{d}} {\log (N)}^\rho }  \Big ) \leq 
    \frac{1}{ N^{\alpha}},
  $$
  where $\rho=\frac{7}{2}$ if $d=3$ and $\rho=1$ if $d>3$. 
   \end{Theorem}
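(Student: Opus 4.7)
The plan is to realize BIDLA through the Random Barrier Growth (RBG) auxiliary process sketched in the introduction, which, by Abelianity and the fact that RBG only defers topplings relative to BIDLA, dominates BIDLA from the outside. It therefore suffices to prove the outer bound for RBG, and then transfer it back to BIDLA. The core mechanism is iterating the per-layer contraction estimate of Proposition~\ref{prop:settlmentsphericalprocess} across a sequence of randomly sampled spherical shells.

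Concretely, starting from $\eta$ on $\partial B_R$ with $|\eta|=N$, I sample a first layer at distance $\Delta_1$ from $\partial B_R$, where $\Delta_1$ is drawn according to the prescribed RBG distribution on an interval of length proportional to $\alpha^2 N^{1/d}(\log N)^{\rho-1}$. Particles are evolved as BRWs that may settle only inside the shell $B_{R+\Delta_1}\setminus B_R$ and are frozen upon reaching $\partial B_{R+\Delta_1}$. Proposition~\ref{prop:settlmentsphericalprocess} then yields that, with probability at least $1-N^{-2\alpha}$, the number $N_1$ of frozen pioneers on $\partial B_{R+\Delta_1}$ satisfies $N_1 \leq \theta N$ for some absolute $\theta\in(0,1)$.

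Iterating, I set $N_0:=N$, $R_0:=R$, and at step $k\geq 1$ sample $\Delta_k$ with scale adjusted to the current pioneer count $N_{k-1}$, and set $R_k:=R_{k-1}+\Delta_k$, obtaining $N_k\le \theta N_{k-1}$ with probability at least $1-N^{-2\alpha}$. After $K\asymp \log N$ iterations one has $N_K<1$, the process terminates, and $\mathscr{S}(\eta)\subseteq B_{R_K}$. A union bound over these $K$ steps produces a total failure probability bounded by $K\cdot N^{-2\alpha}\le N^{-\alpha}$ once $\alpha$ is taken large enough. Because $N_k$ decays geometrically, the accumulated radius increase $\sum_{k=1}^K \Delta_k$ is controlled by a geometric sum with ratio $\theta^{1/d}<1$, hence bounded by a constant multiple of $\alpha^2 N^{1/d}(\log N)^{\rho-1}\cdot \log N = c\alpha^2 N^{1/d}(\log N)^{\rho}$, which is the announced bound.

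The main obstacle is of course the single-shell contraction itself, i.e. Proposition~\ref{prop:settlmentsphericalprocess}: bounding the pioneers produced by BIDLA stabilization inside a thin shell, starting from an arbitrary configuration on the inner boundary. Because the critical BRW drives the dynamics, the number of particles that reach any fixed outer sphere $\partial B_{R+r}$ can be highly correlated and clumped, and one cannot invoke ordinary concentration for sums of i.i.d.\ contributions. Randomizing the layer location is the decisive trick: it lets one integrate the expected pioneer count against $r$ over the sampling window, trading pointwise fluctuations for an average that can be evaluated precisely. The quantitative control of this average relies on the Green's function estimates of Section~\ref{sec:Green}, in particular Lemma~\ref{lem-rasemote} together with the second-moment identity of Lemma~\ref{lem-l2} and its realizations Lemmas~\ref{lem-2moment} and~\ref{lem-2m-recurrent}; it is precisely the logarithmic correction in $d=3$ appearing there that is responsible for the gap between $\rho=7/2$ and $\rho=1$ in the final bound.
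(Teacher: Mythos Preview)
Your overall architecture is right: compare BIDLA to RBG via Abelianity, iterate the shell contraction, and sum the radial increments. But the heart of the argument has a genuine gap.

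Proposition~\ref{prop:settlmentsphericalprocess} is an \emph{expectation} bound, $\mathbb{E}[N_k \mid \mathcal{G}_{k-1}] \le N_{k-1}\bigl(J N_{k-1}/H_k^d + \gamma_{H_k}\bigr)$; it does not say that $N_k \le \theta N_{k-1}$ with probability $1-N^{-2\alpha}$. From an expectation contraction alone, Markov's inequality only gives $\mathbb{P}(N_k > \theta N_{k-1}) \le (\text{contraction factor})/\theta$, which is bounded away from zero and cannot be union-bounded over $K\asymp \log N$ steps to yield $N^{-\alpha}$. There is no per-step concentration result available here (and indeed the pioneers of a single BRW are heavy-tailed on the scale of $R^2$), so your step ``Proposition~\ref{prop:settlmentsphericalprocess} then yields that, with probability at least $1-N^{-2\alpha}$, $N_1\le\theta N$'' fails.

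The paper's fix is to keep the contraction at the level of expectations and apply Markov only once, at the end. Because the shell width $H_t$ and the contraction factor $\gamma_{H_t}$ depend on $N_{t-1}$, which is random and could in principle blow up, one first introduces a stopping time $T^{(\alpha)}=\inf\{t:N_t>\alpha N_0\}$ and proves the iterated expectation bound $\mathbb{E}[N_t \1_{\{t\le T^{(\alpha)}\}}]\le e^{-ct/\log(\alpha N_0)}N_0$ in $d=3$ (and $e^{-ct}N_0$ for $d>3$); this is Lemma~\ref{lemma:decreaseexpectation}. After $t^*\asymp \alpha^{1/d}(\log N_0)^{1+\beta}$ steps the expectation is below $N_0^{-c\alpha^{1/d}}$, and Markov handles $\{N_{t^*}>0,\,T^{(\alpha)}\ge t^*\}$. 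The complementary event $\{T^{(\alpha)}<t^*\}$, i.e.\ the blow-up you do not address at all, is controlled separately by a large-deviation estimate for the total pioneer count (Proposition~\ref{prop:largedeviationcontrol}), which dominates RBG by free BRWs and invokes the exponential moment bound of Proposition~\ref{prop:largedeviation}; this is also where the hypothesis $R\le \alpha^{-1}\sqrt{(\log N)/N}$ enters. Note finally that in $d=3$ the contraction factor is $1-c/\log H_t$, not an absolute $\theta<1$, which is why the number of iterations is $(\log N_0)^2$ rather than $\log N_0$ and why your geometric-sum accounting for $\sum \Delta_k$ needs revision.
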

 The proof is structured as follows. In Section \ref{sect:outerboundgrowth} we consider one BRW starting inside a set $K$, and lower bound the support of the pioneers on $\partial K$  in terms of the second moment of the local time (cf.\ Lemma \ref{lemma:connectionsquarelocalsettl}).  In Section \ref{sec:RBG} we define the Random Barrier Growth process (in short RBG),  and prove the main contraction result (cf.\ Proposition \ref{prop:settlmentsphericalprocess}). In Section \ref{sec:RBGshells} we upper bound the growth of the RBG process by considering a sequence of shells, large enough to extinct the process. We then conclude the proof of Theorem \ref{thm:controlling} in Section \ref{sec:controlling}. Finally, in Section \ref{sec:finalsection} we conclude the proof of Theorem \ref{th:main} by proving  Theorem \ref{th:deviation}, which implies it. 

 \subsection{Expected number of particles settling on a layer}
 \label{sect:outerboundgrowth}  
For any finite $K \subset \mathbb{Z}^d$ and $x \in K$ we define 
 $$
 \mathcal{R}_K^x := \sum\limits_{z \in \partial K}  
 \1_{ \{\ell^x_K(z) > 0\}},  \quad \quad \Gamma^x_K := \sum\limits_{z  \in \partial K } (\ell_K^x(z))^2.
 $$
Here $\mathcal{R}_K^x$ measures the support of the pioneers on $\partial K$. 
The next  lemma relates  the expectations of the above random variables.
\begin{Lemma}\label{lemma:connectionsquarelocalsettl}
For any $\delta \in (0, 1)$ and any $x \in K$ we have that 
$$
\mathbb{E}[ \mathcal{R}_K^x  ] \geq \delta \big  (1 - \delta \,  \mathbb{E}[\Gamma_K^x  ]  \big ).
$$
\end{Lemma}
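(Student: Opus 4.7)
My plan is to derive the statement from an elementary pointwise inequality on non-negative integers, followed by integration.

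The key observation is that for every $n \in \mathbb{N}$ and every $\delta > 0$,
\[
\1_{\{n > 0\}} \,\geq\, \delta \, n - \delta^2 \, n^2 .
\]
This is immediate for $n=0$, while for $n \geq 1$ the right-hand side equals $\delta n (1 - \delta n) \leq 1/4 < 1$, since the quadratic $y \mapsto y(1-y)$ attains its maximum value $1/4$ at $y=1/2$.

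Applying this pointwise to $n = \ell_K^x(z)$ for each $z \in \partial K$ and summing gives
\[
\mathcal{R}_K^x \,\geq\, \delta \sum_{z\in \partial K} \ell_K^x(z) \,-\, \delta^2\, \Gamma_K^x
\,=\, \delta \, \ell_K^x(\partial K) - \delta^2 \, \Gamma_K^x .
\]
I would then take expectations and use the identity $\mathbb{E}[\ell_K^x(\partial K)] = \sum_{z\in \partial K} G_K(x,z) = 1$; this follows from the fact (recalled in Section~\ref{sec:Green}) that for a critical BRW the expected number of pioneers landing at $z \in \partial K$ equals the hitting probability $\mathbb{P}(S^x(H_{\partial K}) = z)$ of the associated simple random walk, and these probabilities sum to $1$ since the walk almost surely exits the finite set $K$. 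This yields
\[
\mathbb{E}[\mathcal{R}_K^x] \,\geq\, \delta \,-\, \delta^2 \, \mathbb{E}[\Gamma_K^x]
\,=\, \delta \bigl(1 - \delta\, \mathbb{E}[\Gamma_K^x]\bigr) ,
\]
as required.

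There is no real obstacle in this proof: it is a short elementary manipulation, and the restriction $\delta \in (0,1)$ merely ensures that the lower bound is non-trivial. The conceptual content is not in the lemma itself but rather in how it will be used downstream: combined with the second-moment estimates of Section~\ref{sec:Green}, choosing $\delta$ of order $1/\mathbb{E}[\Gamma_K^x]$ will turn an upper bound on the mean square local time into a useful lower bound $\mathbb{E}[\mathcal{R}_K^x] \gtrsim 1/\mathbb{E}[\Gamma_K^x]$ on the expected size of the support of the pioneers on $\partial K$.
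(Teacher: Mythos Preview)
Your proof is correct. Both your argument and the paper's land on the same pointwise inequality
\[
\mathcal{R}_K^x \,\ge\, \delta\,\ell_K^x(\partial K) - \delta^2\,\Gamma_K^x
\]
before taking expectations and using $\mathbb{E}[\ell_K^x(\partial K)]=1$, but the routes differ. The paper applies Cauchy--Schwarz in the form $(\ell_K^x(\partial K))^2 \le \Gamma_K^x \cdot \mathcal{R}_K^x$ and then splits according to whether $\mathcal{R}_K^x \le \delta\,\ell_K^x(\partial K)$, using the Cauchy--Schwarz bound to control the contribution on that event. Your argument bypasses both Cauchy--Schwarz and the case split entirely by working site by site with the elementary scalar inequality $\1_{\{n>0\}} \ge \delta n - \delta^2 n^2$, which is more direct and slightly more transparent. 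The paper's approach has the mild conceptual advantage of making the role of the support size explicit through Cauchy--Schwarz, which is perhaps why the authors chose it, but yours is shorter and equally rigorous.
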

\begin{proof}
Suppose that $x \in K$. 
Recall the definition of 
the number of pioneers  $\ell_K^x(\partial K)$ in 
(\ref{eq:pioneers}).
By  the Cauchy-Schwarz inequality we  have that 
$$
( \ell_K^x(\partial K)) ^2  \leq \Gamma_K^x  \,  \mathcal{R}^x_K.
$$
Thus, 
\begin{equation}\label{eq:inequalitygammarho}
\Gamma_K^x \geq \frac{ (\ell_K^x(\partial K) )^2}{\mathcal{R}^x_K} 
\1_{\{  \mathcal{R}^x \leq \delta   \, { \ell_K^x(\partial K)} \,   \} }
\geq \frac{1}{\delta} 
\1_{ \{   \mathcal{R}^x_K \leq \delta  { \ell_K^x(\partial K)} \}   }  { { \ell_K^x(\partial K)}}.
\end{equation}
This implies that 
\begin{align*}
\mathcal{R}^x_K 
& \geq  \delta \, {\ell_K^x(\partial K)}
\1_{\{  \mathcal{R}_K^x > \delta \, { \ell_K^x(\partial K)}  \} } \\
& \geq  \delta  \big ({\ell_K^x(\partial K)} \, -  \, { \ell_K^x(\partial K)} \1_{\{  \mathcal{R}_K^x \leq \delta \, { \ell_K^x(\partial K)}   \} }    \big )  \\
& \geq 
\delta 
\big ({ \ell_K^x(\partial K)} \,  -  \,  \delta  \,  \Gamma_K^x     \big ).
 \end{align*}
 Taking expectations on both sides and using that $\mathbb{E}[{ \ell_K^x(\partial K)} ] = 1$
 we conclude the proof. 
\end{proof}

The next proposition plays a central role in the proof of our 
outer bound.

\begin{Proposition}\label{prop:fractionsettle}
There exists  $c > 0 $  such that,
for each $R >0$ and $x \in B_R$,
we have
$$
\mathbb{E}
\bigg [
  \sum\limits_{z \in \partial B_R  } \big ( 
  \mathcal{\ell}^x_{B_R }(z) -  \1_{ 
   \{ \mathcal{\ell}_{ B_R}^x(z) > 0  \} }  \big )     \bigg]  \leq 
   \begin{cases} 
\displaystyle  1 -     \frac{c}{\log (R - |x|)} & \mbox{ if $d=3$} \\
\displaystyle 1 - c & \mbox{ if $d>3$}.
   \end{cases}
$$
\end{Proposition}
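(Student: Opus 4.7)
The key observation is that the quantity to bound equals
\[
\mathbb{E}\big[\ell^x_{B_R}(\partial B_R)\big] - \mathbb{E}[\mathcal{R}^x_{B_R}] = 1 - \mathbb{E}[\mathcal{R}^x_{B_R}],
\]
where the first equality is by definition and the second uses the optional stopping theorem for the martingale counting the total number of particles in the critical BRW (so the expected number of pioneers on $\partial B_R$ equals $1$). Hence the statement reduces to the lower bound
\[
\mathbb{E}[\mathcal{R}^x_{B_R}] \;\geq\; \frac{c}{\log(R-|x|)}\,\mathbf{1}_{d=3} + c\,\mathbf{1}_{d>3}.
\]

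The tool for this is already in place: Lemma \ref{lemma:connectionsquarelocalsettl} gives, for every $\delta \in (0,1)$,
\[
\mathbb{E}[\mathcal{R}^x_{B_R}] \;\geq\; \delta\bigl(1 - \delta\,\mathbb{E}[\Gamma^x_{B_R}]\bigr),
\]
so it suffices to produce a sharp upper bound on $\mathbb{E}[\Gamma^x_{B_R}] = \sum_{z \in \partial B_R} \mathbb{E}[(\ell^x_{B_R}(z))^2]$. The second-moment identity of Lemma \ref{lem-l2} yields
\[
\mathbb{E}[\Gamma^x_{B_R}] \;\leq\; \mathbb{E}[\ell^x_{B_R}(\partial B_R)] + \sigma^2 \sum_{z \in \partial B_R}\sum_{y \in B_R} G_R(x,y)\,G_R^2(y,z),
\]
and the double sum is controlled exactly by Corollary \ref{cor:localtimeboundaryball}, which gives a bound of order $\log(R-|x|)$ when $d=3$ and of order $1$ when $d\geq 4$. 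Combining these estimates with $\mathbb{E}[\ell^x_{B_R}(\partial B_R)]=1$ yields
\[
\mathbb{E}[\Gamma^x_{B_R}] \;\leq\; C_d\bigl(\log(R-|x|)\,\mathbf{1}_{d=3} + \mathbf{1}_{d>3}\bigr)
\]
for a constant $C_d$ depending only on $d$ and $\sigma^2$.

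To finish, note that since $\ell^x_{B_R}(z)$ is integer-valued and non-negative, $(\ell^x_{B_R}(z))^2 \geq \ell^x_{B_R}(z)$, so $\mathbb{E}[\Gamma^x_{B_R}] \geq 1$; hence the choice $\delta := 1/(2\,\mathbb{E}[\Gamma^x_{B_R}])$ lies in $(0, 1/2]$ and is admissible. Plugging this into Lemma \ref{lemma:connectionsquarelocalsettl} gives
\[
\mathbb{E}[\mathcal{R}^x_{B_R}] \;\geq\; \frac{1}{4\,\mathbb{E}[\Gamma^x_{B_R}]} \;\geq\; \frac{c}{\log(R-|x|)}\,\mathbf{1}_{d=3} + c\,\mathbf{1}_{d>3},
\]
which is what we wanted. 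The argument is essentially a clean assembly of already-established results; there is no genuine obstacle, only the minor care needed to check $\delta \in (0,1)$ and to invoke the correct regime of Corollary \ref{cor:localtimeboundaryball}. The reason the $d=3$ bound is quantitatively weaker traces directly to the $\log(R-|x|)$ growth of the Green's function convolution appearing in \eqref{2dmoment-boundary}, which is a genuine feature of the marginal transience of three-dimensional random walks.
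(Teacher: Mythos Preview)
Your proof is correct and follows essentially the same route as the paper: rewrite the quantity as $1 - \mathbb{E}[\mathcal{R}^x_{B_R}]$, apply Lemma~\ref{lemma:connectionsquarelocalsettl} with $\delta = 1/(2\,\mathbb{E}[\Gamma^x_{B_R}])$, and bound $\mathbb{E}[\Gamma^x_{B_R}]$ via Corollary~\ref{cor:localtimeboundaryball}. Your version is slightly more explicit in justifying that $\delta \in (0,1)$ and in invoking Lemma~\ref{lem-l2} as the intermediate step, but the argument is otherwise identical.
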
 
\begin{proof} 
We use the fact that 
$\mathbb{E} \big [
  \sum_{z \in \partial B_R  } 
  \mathcal{\ell}^x_{ B_R }(z) \big ]  = 1$ 
 and  apply
Lemma \ref{lemma:connectionsquarelocalsettl} 
with $\delta =  1 /  ( 2  \mathbb{E}[  \Gamma^x_{B_R} )]$
to obtain 
$$
\mathbb{E}
\Big [  
  \sum_{z \in \partial B_R  } \big ( 
  \mathcal{\ell}^x_{ B_R }(z) -  \1_{ 
   \{ \mathcal{\ell}_{ B_R }^x(z) > 0  \} }  \big )     \Big] 
   \leq  1 - \frac{1}{4 \mathbb{E}[ \Gamma^x_{B_R} ]}.
   $$
   The claim then follows from Corollary 
   \ref{cor:localtimeboundaryball}.
\end{proof}

   \subsection{The Random Barrier Growth}\label{sec:RBG}
We now introduce the central object for bounding the growth of the aggregate, namely the \textit{Random Barrier Growth} (RBG).
In RBG, settling occurs only on a spherical layer 
 chosen randomly within a shell of width $H$ according to a certain distribution. 
The choice of this distribution
fulfils a key property, namely
the  expected number of offspring hitting 
 the random
layer within the current aggregate is proportional to the volume of the aggregate over $H^d$. By choosing $H$ appropriately, 
we obtain a contraction on the number of pioneers which do not settle.  The key estimate is presented in
Proposition \ref{prop:settlmentsphericalprocess}
below.

We now proceed with the formal definition of the RBG
in a shell. 
\begin{Definition}[\textbf{Random Barrier Growth}]
\label{def:RBG}
Let $R_1 < R_2$ be two integer numbers,
set  $H = R_2 - R_1$. 
Let $\eta : \Z^d \to  \mathbb{N}$
be the initial particle configuration
with support in $\partial B_{R_1}$
and number of particles $N_1 = \sum_{z \in 
\partial B_{R_1}} \eta(z)$. 
Let further $Z_1, \ldots Z_{N_1}$ be i.i.d.\  random variables with distribution 
 supported in   $ \{1, \ldots,  H\} $  given by
\begin{equation}\label{eq:probabilitydistributionZ}
P( Z_i = h) = \frac{ h^{d-1} }{\sum_{r =1 }^H  r^{d-1}},
\end{equation}
The $\{ Z_i\}$ represent the sequence of heights at which the BRWs will probe the space for settlement.

The RBG consists of a discrete time
Markov chain  $(A_t)_{t \in \mathbb{N}}$
and a sequence of random variables 
$(  \tilde G_t,  \tilde R_t)_{  t \in \mathbb{N}_0  }$,
 defined as follows. 
We set $A_0 =  \emptyset$ 
and label the particles in $\eta$ with integers from $1$ to $N_1$ in some arbitrary order. 
At each step $t \in \{1, \ldots, N_1\}$, 
we start, from the $t$-th particle of $\eta$, an independent BRW
stopped on $\partial B_{ R_1 + Z_t  }$ 
and let $\xi_t$ be the configuration of pioneers  on $\partial B_{ R_1 + Z_t  }$.
We define the aggregate of the step $t$ as 
the union of the aggregate of the step $t-1$ and 
the support of the  pioneers configuration,  namely 
$$
A_t := A_{t-1} \cup \{ z \in \partial  B_{ R_1 + Z_t  } \, \, : \,   \xi_t(z) > 0     \}.
$$
After that we proceed as follows.
\begin{enumerate}
\item[(a)]
For each particle reaching $\partial B_{ R_1 + Z_t  }$ within 
$A_{t-1}$ we start an independent critical branching random walk stopped on
$\partial B_{ R_2  }$  and call $\tilde{G}_t$ the total number of pioneers 
on $\partial B_{ R_2  }$, which we dub \textit{green particles}.
\item[(b)]  
For each vertex 
 $ x \in \partial  B_{ R_1 + Z_t  } \setminus A_{t-1}$
 with $\xi_t(x)>0$,
 we let exactly one of the $\xi_t(x)$ particles at $x$ settle
 and  start 
 $(\xi_t(x) - 1) \1_{ \{ \xi_t(x)>0 \}  }$
  independent critical branching random walks
 stopped on $ B_{ R_2  }$ from $x$, 
 namely one critical Branching Random Walk from each particle which has not settled at $x$. 
 We call $\tilde{R}_t$ 
 be the total number of pioneers on $\partial B_{ R_2  }$,
 which we dub \textit{red particles}.
\end{enumerate}
We then move to the step $t+1$ and iterate the procedure
up to reaching $t = N_1$. 
\end{Definition}
Thus, 
\begin{equation}\label{eq:numberescaping}
N_2 := \sum\limits_{ t =1   }^{N_1}  (\tilde{G}_t + \tilde{R}_t)
\end{equation}
corresponds to the total number of particles reaching 
 $\partial B_{ R_2  }$
 in RBG,  which we refer to as the \textit{escaping particles.}
The next proposition provides an upper bound on the expected number of escaping particles. 
\begin{Proposition}
\label{prop:settlmentsphericalprocess}
Let $d \geq 3$.  
There exist constants $c>0$, $J< \infty$ depending only on the dimension  such that the following holds.
Consider a RBG with integer  radii $R_2 >  R_1$,
starting from an initial particle configuration $\eta$
with support in $ \partial {B}_{ R_1 }$
and with $N_1 = \sum_{x \in  \partial {B}_{ R_1 }} \eta(x) $ particles.  
Then, with $H : = R_2 - R_1$, 
$$
\mathbb{E} \big [  N_2   \big]  \leq  N_1 \,  \Big ( 
J  \,  \frac{N_1}{H^{d}} + \, \,  \gamma_{H} \Big ) 
$$
where
$$
\gamma_{H}  := 
\begin{cases}
    1   - \frac{c}{\log H} & \mbox{ if $d=3$} \\
1 - c  & \mbox{ if $d>3$}.
\end{cases}
$$
\end{Proposition}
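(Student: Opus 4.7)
The plan is to fix a step $t \in \{1, \dots, N_1\}$, condition on $A_{t-1}$, the random height $Z_t$, and the deterministic location $x_t \in \partial B_{R_1}$ of the $t$-th particle of $\eta$, and compute $\E[\tilde G_t + \tilde R_t]$ step by step. Writing $\ell := \ell^{x_t}_{B_{R_1+Z_t}}$ for the local time of the single BRW launched at step $t$, by definition of the green and red particles and the fact that every critical BRW started inside $B_{R_2}$ produces on average exactly one pioneer on $\partial B_{R_2}$, one has
\[
\E[\tilde G_t + \tilde R_t \mid A_{t-1}, Z_t] = \sum_{z \in A_{t-1}} \E[\ell(z)] + \sum_{z \notin A_{t-1}} \E\big[\ell(z) - \1_{\ell(z)>0}\big].
\]
Adding and subtracting $\sum_z \E[\1_{\ell(z)>0}]$, and applying Proposition \ref{prop:fractionsettle} to the $\sum_z \E[\ell(z) - \1_{\ell(z)>0}]$ piece (with $R - \|x_t\| = Z_t$) gives
\[
\E[\tilde G_t + \tilde R_t \mid A_{t-1}, Z_t] \leq \gamma_{Z_t} + \sum_{z \in A_{t-1} \cap \partial B_{R_1+Z_t}} G_{R_1+Z_t}(x_t, z).
\]

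\textbf{The key cancellation.} The central observation, and the reason for the specific choice of distribution \eqref{eq:probabilitydistributionZ}, is that the weight $h^{d-1}$ in the law of $Z_t$ is precisely tuned to balance the harmonic-measure decay. Indeed, for $z \in \partial B_{R_1+h}$ and $x_t \in \partial B_{R_1}$ the reverse triangle inequality gives $\|x_t - z\| \geq h$, so Lemma \ref{lem-AG} yields $G_{R_1+h}(x_t,z) \leq \kappa \|x_t - z\|^{1-d} \leq \kappa h^{1-d}$. Writing $Z_H := \sum_{r=1}^H r^{d-1} \geq H^d/d$ and averaging over $Z_t$, the weights $h^{d-1}$ cancel exactly against $h^{1-d}$:
\[
\E_{Z_t}\Big[\sum_{z \in A_{t-1} \cap \partial B_{R_1+Z_t}} G_{R_1+Z_t}(x_t, z)\Big] \leq \sum_{h=1}^H \frac{h^{d-1}}{Z_H} \cdot \frac{\kappa}{h^{d-1}} \cdot |A_{t-1} \cap \partial B_{R_1+h}| \leq \frac{\kappa d}{H^d}\, |A_{t-1}|,
\]
where the last step uses the disjointness of the shells $\partial B_{R_1+h}$ for $h = 1, \dots, H$. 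Since $\gamma_h$ is non-decreasing in $h$ and $Z_t \leq H$, combining the two displays gives $\E[\tilde G_t + \tilde R_t \mid A_{t-1}] \leq \gamma_H + \kappa d H^{-d} |A_{t-1}|$.

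\textbf{Iteration and conclusion.} It remains to estimate $\sum_t \E[|A_{t-1}|]$. The increment $|A_t| - |A_{t-1}|$ is the number of new sites hit at step $t$, which is bounded by the support of $\xi_t$ and hence by $\sum_z \xi_t(z) = \ell^{x_t}_{B_{R_1+Z_t}}(\partial B_{R_1+Z_t})$, whose expectation equals $1$ by criticality of the BRW (a critical branching process stopped at the first exit of $B_{R_1+Z_t}$ is a martingale). Therefore $\E[|A_{t-1}|] \leq t-1$, and summing yields
\[
\E[N_2] = \sum_{t=1}^{N_1} \E[\tilde G_t + \tilde R_t] \leq N_1 \gamma_H + \frac{\kappa d}{H^d} \cdot \binom{N_1}{2} \leq N_1 \Big(\gamma_H + J \, \frac{N_1}{H^d}\Big)
\]
with $J := \kappa d / 2$, as required. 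The main technical input, Proposition \ref{prop:fractionsettle}, is already available; the essential new step is the algebraic balancing highlighted in the middle paragraph, and this is what justifies the unusual form of the layer distribution and is arguably the only non-routine point in the proof.
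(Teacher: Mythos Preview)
Your proof is correct and follows essentially the same route as the paper: you split $\tilde G_t + \tilde R_t$ into the contribution from sites in $A_{t-1}$ (bounded via Lemma~\ref{lem-AG} and the cancellation with the layer weights $h^{d-1}$) and the remainder (bounded via Proposition~\ref{prop:fractionsettle}), then use $\E[|A_{t-1}|] \leq t-1$ and sum over $t$. The only cosmetic difference is that you combine the two pieces via an add--subtract before bounding, whereas the paper treats $\tilde G_t$ and $\tilde R_t$ separately throughout; the ingredients and the key balancing observation are identical.
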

This result  is  useful since,  by setting the width of the shell 
$H$ appropriately as a function of $N_1$,  it implies 
that only a fraction of the number of the particles of the initial particle configuration escapes without settling.  
\begin{proof}
We bound the expectation of $\tilde{G}_t$ and $\tilde{R}_t$
in (\ref{eq:numberescaping}) separately, starting from $ \tilde{G}_t$.
Let $t \in \{1, \ldots N_1\}$ be an arbitrary step of the procedure and let  $y_t \in B_{R_1}$ be  the vertex where the $t$-th particle
of the initial configuration is located.
Let $\mathcal{F}_{t}$ be the $\sigma$-algebra
generated by the first $t$ steps of the RBG.

The first observation is that the expectation of $\tilde{G}_t$,
the number of  escaping green particles at the step $t$,
corresponds to the expected number of particles hitting $\partial B_{ R_1+Z_t }$
within $A_{t-1}$.
This holds since,  once particles are stopped in  $\partial B_{ R_1+Z_t } \cap A_{t-1}$, they proceed to $\partial B_{R_2}$ 
as independent BRWs stopped on $B_{R_2}$.
Hence,  
summing over all possible realisations of the random radius at the step $t$
 we deduce that
\begin{equation}\label{eq:randomness}
\begin{split}
\mathbb{E} \big[  \tilde{G}_t  \bigm  | \mathcal{F}_{t-1}  \big ] &  =  
\sum\limits_{h = 1}^H
P( Z_t = h) 
\sum\limits_{ x \in A_{t-1}   \cap  \partial   B_{ R_1 + h } } 
G_{   R_1 + h } (y_t, x  ) \\
& \leq   
\sum\limits_{h = 1}^H
 \frac{ h^{d-1} }{H^d} | A_{t-1}   \cap  \partial  B_{R_1 +  h } | 
\frac{c }{h^{d-1}}  
 \leq   c  \frac{ | A_{t-1}  |}{H^{d}} . 
\end{split}
\end{equation} 
Here we used  (\ref{eq:probabilitydistributionZ}),
Lemma \ref{AG-hitting},
and 
the fact that  the sets $\{ \partial B_{R_1 + h}$  : 
$h \in \mathbb{N} \}$ 
are disjoint. 
Using that $\E [|A_{t-1} | ] \leq t-1$, we deduce that $\E [ \tilde G_t ] \leq ct / H^d$. 

For the  expected number of red particles 
at the step $t$ we have
$$
\mathbb{E} \big[  \tilde{R}_t  \bigm  | \mathcal{F}_{t-1}  \big ]   =  
\frac{1}{  \sum_{r=1}^{H} r^{d-1}}
\sum_{h  = 1 }^H h^{d-1}   \sum\limits_{ z \in   \partial B_{ R_1 + h } \setminus A_{t-1}    } \mathbb{E} \Big  [  ( \xi_t(z) - 1 ) \1_{  \xi_t(z) > 0} \Big ]   
  \leq  \gamma_{H},
$$
where for the last step we summed over all vertices in $ \partial B_{ R_1 + h }$  and applied Proposition~\ref{prop:fractionsettle}.

Combining the two estimates above we obtain 
\begin{align*}
\mathbb{E}[  N_2  ] &  = \sum\limits_{t=1}^{N_1} \mathbb{E}  \big[\tilde{G}_t + \tilde{R}_t \big]  
 \leq  \frac{c}{H^{d}} \sum\limits_{t=1}^{N_1} t \,  + \, \gamma_HN_1 
  \leq  c \frac{N_1^2}{ H^{d}} + \, \, \gamma_H N_1 . 
\end{align*}
This concludes the proof.
\end{proof}

\begin{Remark}\label{rem:layer}
The choice of the barrier distribution in (\ref{eq:probabilitydistributionZ}) 
is crucial for Proposition \ref{prop:settlmentsphericalprocess}. 
Thanks to this distribution, the proportionality depends only on the  size of $A_t$, 
and not on its specific shape. 
\end{Remark}

    \subsection{RBG through shells} \label{sec:RBGshells}
The proof of Theorem \ref{thm:controlling}
relies on analyzing a sequence of shells
and a sequence of RBG's crossing each of them.
In this procedure, the width of each subsequent shell
is not fixed but random: it depends on the number of particles
that escape from the previous shell.
In light of Proposition \ref{prop:settlmentsphericalprocess},
the width is chosen so that, in expectation,
only a fraction of the particles escaping the previous shell
will escape the next one.
This results in a contraction at a rate that can be uniformly bounded up to a certain blow-up time. 
As a consequence,  the total number of shells crossed before the process dies out is small.

\begin{Definition}[\textit{Random Barrier Growth through shells}]
\label{def:RBGshell}
Fix an initial (integer) radius $R_0$ and
an initial particle configuration $\eta_0$ with support on 
 $\partial B_{ R_0}$.
Let $N_0 := \sum_{x \in \mathbb{Z}^d} \eta_0(x)$ be the total number of particles at time zero. 
At each step $t \in \mathbb{Z}_+$,
given the step $t-1$, 
we set
$
R_t := R_{t-1} +   H_t,$
where 
\begin{equation}\label{eq:choiceHt}
H_{t}   : = \Big   \lceil  \,     \Big(  \kappa \,  N_{t-1}  \,   \big( \log (N_{t-1} + 1) \big)^{\beta}  \Big )^{ \frac{1}{d}}  \, \Big \rceil
\end{equation}
with $\beta= 1$ if $d=3$ and $\beta=0$ if $d > 3$. Here $\kappa>0$  is a large enough constant depending only on the dimension.
We start a RBG with radii $R_{t-1}$ and $R_t$ from the  particle configuration 
$\eta_{t-1}$ 
(which has support in $\partial B_{R_{t-1}})$
and let $\eta_t$ be the escaping particle configuration of such process 
(which has  support
in $\partial B_{R_t}$,  recall Definition \ref{def:RBG}).
We then move to the next step $t+1$ and repeat the procedure. 
Define for any $\alpha >0$ the  stopping time (with the convention that $\inf \emptyset = + \infty$)
\begin{align}
\label{eq:Tblow}
T^{(\alpha)} & : = \inf \{ t  \geq 0  \, : \, N_t  > \alpha  N_0  \} . 
\end{align} 
\end{Definition}
The next result shows that, as long as $\{t < T^{(\alpha)}\}$,
the number of particles in the RBG decays exponentially fast in the number of shells which have been crossed, with a rate which might depend on $N_0$, on $\alpha$,  and on the dimension $d\geq 3$. 
\begin{Lemma}\label{lemma:decreaseexpectation}
Let $d \geq 3$. 
In (\ref{eq:choiceHt}) we can choose $\kappa$ large enough, depending only on the dimension, so that for any $\alpha > 1$ and 
 $t \in \mathbb{Z}_+$ we have 
 \begin{equation}
 \label{eq:secondrecursion}
\mathbb{E} \big [  N_t \,   \1_{t \leq T^{(\alpha)}  }    \big ]   \leq 
\begin{cases}
   e^{  - \frac{c \, t \, }{\log ( \alpha N_0 )}}  N_0 & \mbox{ if $d=3$} \\
e^{-c \,  t  \, }  N_0 & \mbox{ if $d>3$.}
\end{cases}
\end{equation}
\end{Lemma}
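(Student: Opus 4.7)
The strategy is a direct iteration of the one-step contraction of Proposition~\ref{prop:settlmentsphericalprocess}, with the stopping time $T^{(\alpha)}$ used solely to uniformly bound the per-step contraction factor. The key observation is that the event $\{t\le T^{(\alpha)}\}=\{N_s\le \alpha N_0 \text{ for all }s<t\}$ is $\mathcal F_{t-1}$-measurable, so by the tower property
\[
\mathbb E\big[N_t\,\mathbf 1_{t\le T^{(\alpha)}}\big]=\mathbb E\big[\mathbb E[N_t\mid \mathcal F_{t-1}]\,\mathbf 1_{t\le T^{(\alpha)}}\big]\le \mathbb E\big[N_{t-1}\lambda_{t-1}\,\mathbf 1_{t\le T^{(\alpha)}}\big],
\]
where, by Proposition~\ref{prop:settlmentsphericalprocess}, $\lambda_{t-1}:=J\,N_{t-1}/H_t^{d}+\gamma_{H_t}$. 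It therefore suffices to show that on $\{t\le T^{(\alpha)}\}$ one has $\lambda_{t-1}\le \lambda_\star$ for a deterministic $\lambda_\star<1$, where $\lambda_\star$ depends only on $\alpha N_0$ in $d=3$ and only on the dimension for $d>3$.

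The choice (\ref{eq:choiceHt}) is tailored precisely to make both pieces of $\lambda_{t-1}$ small. Since $H_t^{d}\ge \kappa N_{t-1}(\log(N_{t-1}+1))^{\beta}$, we immediately get
\[
J\,\frac{N_{t-1}}{H_t^{d}}\le \frac{J}{\kappa\,(\log(N_{t-1}+1))^{\beta}}.
\]
For $d>3$ this is $\le J/\kappa$, and $\gamma_{H_t}\le 1-c$. Picking $\kappa$ large enough (depending only on the dimension) so that $J/\kappa<c/2$ yields $\lambda_{t-1}\le 1-c/2$, uniformly. For $d=3$ we have $\beta=1$ and must also control $\log H_t$. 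From $H_t\le 2(\kappa N_{t-1}\log(N_{t-1}+1))^{1/3}$ (for $N_{t-1}$ exceeding a constant; the case of small $N_{t-1}$ is trivial) one gets $\log H_t\le C\log(N_{t-1}+1)$, hence $\gamma_{H_t}\le 1-c'/\log(N_{t-1}+1)$. Combined with $J/(\kappa \log(N_{t-1}+1))\le c'/(2\log(N_{t-1}+1))$ for $\kappa$ large, this gives $\lambda_{t-1}\le 1-c''/\log(N_{t-1}+1)$.

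Now we use the stopping time: on $\{t\le T^{(\alpha)}\}$ we have $N_{t-1}\le \alpha N_0$, so $\log(N_{t-1}+1)\le 2\log(\alpha N_0)$ for $\alpha N_0$ large, and hence $\lambda_{t-1}\le 1-c''/(2\log(\alpha N_0))=:\lambda_\star$ in $d=3$, while $\lambda_\star=1-c/2$ in $d>3$. Since $\mathbf 1_{t\le T^{(\alpha)}}\le \mathbf 1_{t-1\le T^{(\alpha)}}$, a straightforward induction on $t$ gives
\[
\mathbb E\big[N_t\,\mathbf 1_{t\le T^{(\alpha)}}\big]\le \lambda_\star^{\,t}\,N_0,
\]
and the bound $\lambda_\star^{\,t}\le e^{-(1-\lambda_\star)t}$ yields the two cases of (\ref{eq:secondrecursion}). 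The only delicate point is the double estimate on $\log H_t$ in dimension three (upper bound for $\gamma_{H_t}$ and lower bound for the ``cost'' factor), but both sides are logarithmic in $N_{t-1}$, so the blow-up threshold $\alpha N_0$ enters only through a $1/\log(\alpha N_0)$ loss, which is exactly what the lemma claims.
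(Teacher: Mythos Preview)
Your proposal is correct and follows essentially the same approach as the paper's proof: both condition on $\mathcal F_{t-1}$, exploit that $\{t\le T^{(\alpha)}\}$ is $\mathcal F_{t-1}$-measurable, apply Proposition~\ref{prop:settlmentsphericalprocess}, use the choice of $H_t$ to bound the contraction factor by $1-c/(\log(N_{t-1}+1))^\beta$, then invoke $N_{t-1}\le\alpha N_0$ on the event and iterate via $\mathbf 1_{t\le T^{(\alpha)}}\le \mathbf 1_{t-1\le T^{(\alpha)}}$. The paper's write-up is slightly more explicit about the algebra in the $d=3$ case (spelling out $\log H_t\le (\log\kappa+\log(N_{t-1}+1)+\log\log(N_{t-1}+1))/d$), but the argument is the same.
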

\begin{proof}
Let $\mathcal{G}_t$ be the sigma-field
generated by the fist $t$ steps of the RBG through Random Shells
and observe that $N_t$,
$H_{t+1}$,  
$R_{t+1}$ and 
$\1_{\{T^{(\alpha)} \geq t+1   \}}$
are $\mathcal{G}_t$-measurable by construction.
B Proposition \ref{prop:settlmentsphericalprocess} 
\begin{align*}
  \mathbb{E} \big  [  N_{t} \1_{\{ T^{(\alpha) } \geq t \}} \big ]  & 
  = 
  \mathbb{E} \Bigg[   \mathbb{E} \big [
N_t \bigm | \mathcal{G}_{t-1} \big ] \1_{\{ T^{(\alpha) } \geq t\}} \Big ] \\
& \leq  
\mathbb{E}
 \bigg [ 
 \Big (  J  \frac{N_{t-1}^2}{H_t^d}
 \, +  \, 
\gamma_{H_t}  N_{t-1}   \Big ) 
 \1_{\{ T^{(\alpha) } \geq t \}} \bigg ] \\
 & \leq  
\mathbb{E}
 \bigg [ N_{t-1} 
 \Big (  \,   \frac{J}{ \kappa} \cdot \frac{1}{ (\log( N_{t-1} + 1))^{\beta}}  \,  +
\gamma_{H_t}     \Big ) 
 \1_{\{T^{(\alpha)}\geq t\}} \bigg ]  \\
  & \leq  
\begin{cases}
\displaystyle 
\mathbb{E} 
 \Big[ N_{t-1} e^{ -  \frac{c}{ \log (N_{t-1} + 1)  }   }
  \1_{\{T^{(\alpha)}\geq t\}} \Big]  & \mbox{ if $d=3$} 
  \\ \displaystyle 
e^{-c}  \, \,   \mathbb{E}
 \big [ N_{t-1}  
 \1_{\{T^{(\alpha)}\geq t\}} \big ]  & \mbox{ if $d>3$} 
\end{cases}
 \\
  & \leq  
\begin{cases}
  \displaystyle 
  e^{ -  \frac{c}{ \log (\alpha N_{0} ) }}
  \mathbb{E} 
 \big [ N_{t-1}  
  \1_{\{ T^{(\alpha)} \geq t-1\}} \big]  & \mbox{ if $d=3$} \\
 e^{-c}  \, 
  \mathbb{E}
 \big [ N_{t-1}  
 \1_{\{ T^{(\alpha)} \geq t-1\}} \big ]   & \mbox{ if $d>3.$} 
\end{cases}
\end{align*} 
For the third inequality above
in dimension $d=3$ we used that $\kappa$ is a large enough constant such that if $N_{t-1}> 0$, recalling that 
	\[ \gamma_{H_t} = 1   - \frac{c}{\log H_t } 
	\quad \mbox{ with } 
	\quad H_{t}   : = \Big   \lceil  \,     \Big(  \kappa \,  N_{t-1}  \,  \log (N_{t-1} + 1)  \Big )^{ \frac{1}{d}}  \, \Big \rceil , 
	\]
 we have
\begin{multline}
\gamma_{H_t}  +  \frac{J}{ \kappa} \cdot \frac{1}{ \log( N_{t-1} + 1)}  \, 
  \leq 
1 - \frac{c d}{\log \kappa + \log (N_{t-1}+1) + \log \log (N_{t-1} + 1)} +  \frac{J}{ \kappa} \cdot \frac{1}{ \log( N_{t-1} + 1)}  \\  \leq 1 - \Big  [  \frac{c \, d }{  2 \log \kappa + 2 } -  \frac{J}{ \kappa}  \Big ] 
\cdot \frac{1}{ \log( N_{t-1} + 1)} \leq e^{  - \frac{c}{ \log ( N_{t-1} +1)  }  }.
\end{multline}
The proof then follows by iterating this bound. 
\end{proof}
The previous lemma
is useful to stochastically bound from above the decay in the number of particles in the RBG when the number of crossed shells is large.  
However, when this number is small, the previous lemma is powerless, and we need the following proposition, which relies on a stochastic comparison with independent BRWs restricted to a ball and  large deviation estimates.  

\begin{Proposition}\label{prop:largedeviationcontrol}
There exists $c >0$ such that, 
for any large enough  $\alpha$,
uniformly in 	$t, D, R_0 \in \mathbb{Z}_+$ satisfying $R_0<D$,  in the initial 
particle configuration
$\eta_0$ with $N_0$ particles and support in $\partial B_{R_0}$,
the RBG 
starting from the particle configuration $\eta_0$ 
satisfies
$$
\mathbb{P}(  N_t > \,  \alpha  \, N_0,  R_{t} < D  ) \leq D \, 
\exp\Big( -  c \,\alpha \,  \frac{N_0}{D^2}\Big).
$$
\end{Proposition}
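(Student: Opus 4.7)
The plan is to dominate the RBG by a family of $N_0$ independent critical BRWs that ignore the settling rule, then combine the exponential moment bound of Proposition~\ref{prop:largedeviation} with a union bound over the possible values of $R_t$.

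For the coupling, consider for each initial particle $y_i \in \eta_0$ the full BRW tree $\mathcal{T}^{y_i}$ rooted at $y_i$. By the branching Markov property, the successive BRW segments launched in the RBG from the descendants of $y_i$ can be spliced into this single tree, and the RBG settlings amount to removing (at most) one subtree per newly visited site of each intermediate sphere. Since settlings can only erase branches, on $\{R_t = r\}$ we have
\begin{equation*}
N_t \;\le\; \ell^{\eta_0}_{B_r}(\partial B_r) \;=\; \sum_{i=1}^{N_0} \ell^{y_i}_{B_r}(\partial B_r),
\end{equation*}
and the individual local times are independent since the trees $\mathcal{T}^{y_i}$ are. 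Decomposing over the possible values of $R_t \in (R_0, D)$ then gives
\begin{equation*}
\P\bigl(N_t > \alpha N_0,\ R_t < D\bigr) \;\le\; \sum_{r=R_0+1}^{D-1} \P\bigl(\ell^{\eta_0}_{B_r}(\partial B_r) > \alpha N_0\bigr).
\end{equation*}

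To bound each summand I apply \eqref{eq:Amines estimate} with $\lambda = \lambda_0$ and use independence to obtain $\E[\exp(\lambda_0 \, \ell^{\eta_0}_{B_r}(\partial B_r)/r^2)] \le \exp(c\lambda_0 N_0/r^2)$. An exponential Chebyshev inequality then yields, for $\alpha$ large enough that $\alpha - c \ge \alpha/2$,
\begin{equation*}
\P\bigl(\ell^{\eta_0}_{B_r}(\partial B_r) > \alpha N_0\bigr) \;\le\; \exp\bigl(-\lambda_0(\alpha-c) N_0 /r^2\bigr) \;\le\; \exp\bigl(-c' \alpha N_0/D^2\bigr),
\end{equation*}
where the second inequality uses $r \le D$. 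Since the sum above has at most $D$ terms, the claim follows.

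The only delicate point in this plan is the stochastic domination: one must check carefully that the concatenation of the BRW segments launched at successive RBG layers can be identified, per initial particle, with a single full BRW via the branching Markov property, and that the RBG settlings only prune subtrees of this BRW rather than duplicating mass. Once this is verified, the remaining steps are a routine union bound followed by an exponential Chebyshev estimate.
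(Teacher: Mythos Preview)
Your proof is correct and follows essentially the same approach as the paper: a union bound over the possible values of $R_t$, stochastic domination of $N_t$ by the pioneers of $N_0$ independent unrestricted BRWs, and an exponential Chebyshev estimate via Proposition~\ref{prop:largedeviation}. The only difference is cosmetic: the paper justifies the domination by invoking the Abelian/monotonicity framework (Corollary~\ref{cor-free}), whereas you argue it directly through the branching Markov property and subtree pruning---which is exactly what that corollary encodes.
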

\begin{proof}
Fix  arbitrary positive integers $D$ and $t $.
By a union bound and the fact that $R_t \in \N$
\begin{equation}\label{eq:stepinitiallargedeviation}
\mathbb{P}(  N_t > \,  \alpha  \, N_0,  R_{t} < D  ) 
\leq 
\sum\limits_{r=R_0}^{D-1} \mathbb{P}(  N_t > \,  \alpha  \, N_0,  R_{t} = r   ).
\end{equation}
We now use that,
almost surely, we can dominate the RBG process
by independent critical BRWs.
Order the particles of the initial configuration $\eta_0$ from $1$ to $ N_0$ arbitrarily.
Fix $r \in \mathbb{Z}_+$. 
From each particle 
we start a independent BRW restricted 
 to $B_{R_0 + r}$,
and  let $\ell^{(i)}_{B_{R_0 + r}}(\partial B_{R_0+r})$
denote the number of pioneers reaching the outer boundary of the ball  $B_{R_0 + r}$.
Moreover, let   
	\[ \mathcal{N}_r := \sum_{i=1}^{ N_0  }
\ell^{(i)}_{B_{R_0 + r}}(\partial B_{R_0+r}) \]
denote the total number of pioneers reaching $\partial B_{R_0+r}$.
The $N_0$ independent BRWs 
starting from each particle in $\eta_0$ 
and the RBG through random shells starting from $\eta_0$
are defined on the same probability space and can be sampled by using the same instructions.
Hence,  by monotonicity (see  Corollary \ref{cor-free} in Appendix \ref{app:Abelian}), we have that
$$
\{ N_t > \alpha N_0, R_t = r  \}
\subseteq 
\{  \mathcal{N}_r > \alpha N_0, R_t = r  \}
\subseteq  
\{  \mathcal{N}_r > \alpha N_0  \} . 
$$
By applying 
 Proposition  \ref{prop:largedeviation}
 with small enough positive  $\lambda$, 
depending only on the dimension,  
we obtain from   from (\ref{eq:stepinitiallargedeviation}) that
\begin{align*}
\mathbb{P}(  N_t > \alpha  \, N_0,  R_{t} < D    )
& \leq 
\sum\limits_{r=R_0}^{D-1} \mathbb{P} \big ( \mathcal{N}_r > \alpha N_0  \big )  \\
& \leq 
\sum\limits_{r=R_0}^{D-1}
e^{  \frac{  - \lambda \alpha N_0  }{   (R_0 + r)^2 } }
\prod_{i=1}^{N_0}
\mathbb{E} \Big [   e^{  \frac{\lambda}{(R_0 + r)^2  } 
\ell^{(i)}_{B_{R_0 + r}}(\partial B_{R_0+r})} \Big ] 
\\
& \leq 
\sum\limits_{r=R_0}^{D-1}
e^{   \frac{ \lambda  (   - \alpha + c)  N_0 }{ (R_0 + r  )^2}} \leq  D
e^{   \frac{  - \alpha  \lambda N_0 }{ 2 D^2}} 
\end{align*}
where for the fourth
inequality we  used that $R_0 + r \leq  D$
and that $\alpha$ is large enough. 
\end{proof}

\subsection{Proof  of Theorem \ref{thm:controlling}} \label{sec:controlling}
We can now conclude the proof of  Theorem \ref{thm:controlling}. 
\begin{proof}[Proof of Theorem \ref{thm:controlling}]
Let $\alpha >1$ be a parameter to be chosen later large enough,  and recall from \eqref{eq:Tblow} the definition of $T^{(\alpha )}$. For some fixed constant $\kappa >0$ chosen large enough so that Lemma \ref{lemma:decreaseexpectation} applies,  define   
$$
t^* :=  
\big\lceil  \alpha^{\frac{1}{d}}  (\log N_0)^{1 + \beta} \big \rceil 
\quad  \quad H^* := \big \lceil   {  {   \big( 2  \alpha  \kappa \,  N_0  \,   (\log  N_0 )}^{ \beta} \big)}^\frac{1}{d} \big \rceil \quad \quad D^* := R_0 + t^* H^* . 
$$
Here $\beta =1$ if $d=3$ and $\beta=0$ if $d>3$, as  in (\ref{eq:choiceHt}).
These choices are such that, for any $\alpha >1$, and for any sufficiently large $N_0$                                                                                                      
\begin{equation}\label{eq:implication}
t <  T^{(\alpha)} \implies R_t \leq R_0 + t  \, H^*.
\end{equation}
For any particle configuration $\eta_0$ with support in $\partial B_{R_0}$ 
and sufficiently many particles, if 
	\[ T_{end}   : = \inf \{ t  \geq 0  \, : \, |\eta_t| = 0  \}, \]
we then have that
\begin{equation}
\label{eq:initialstep}
\begin{split}
\mathbb{P} ( \mathscr{S}(\eta_0)  \not\subseteq  B_{ D^*} )
& \leq  
\mathbb{P}  (R_{T_{end}} > D^*)  \\
& \leq 
\mathbb{P}  (N_{t^*}>0, T^{(\alpha)} \geq  t^*) + \mathbb{P}  (T^{(\alpha)} < t^*) 
\end{split}
\end{equation}
where for the first inequality we used Corollary \ref{cor-free}
to compare Branching IDLA with RBG,
and for the second  inequality we used  (\ref{eq:implication}).
By a direct application of Lemma \ref{lemma:decreaseexpectation} we obtain that for any $\alpha    >1$ and any sufficiently large $N_0$,  
\begin{equation}\label{eq:step1ub}              
\begin{split}
\mathbb{P}  (N_{t^*}>0, T^{(\alpha)} \geq t^*)  & \leq 
 \mathbb{E}[  N_{t^*} \1_{ \{  T^{(\alpha)} \geq  t^* \}  }   ] 
 \leq \begin{cases}
 N_0 \, e^{ - c \frac{t^*}{\log (\alpha N_0)}   }  & \mbox{ if $d=3$ } \\
 N_0  \,  e^{ - c {t^*}   }  &  \mbox{ if $d>3$ }
\end{cases} \\
& \leq
N_0^{ - c \,   \alpha^{\frac{1}{d}}  } . 
\end{split}
\end{equation}
By definition of ${T^{(\alpha)}}$ and by (\ref{eq:implication}), this shows that 
we can choose $\alpha$ large enough so that,  for any sufficiently large $N_0$, 
\begin{equation}\label{eq:step2}
\begin{split}
\mathbb{P}  (T^{(\alpha)} < t^*)  
& \leq 
\mathbb{P} ( \exists t \in \{ 1, \ldots, {t^*-1}  \} \, : \,  R_t \leq  D^*, N_t > \alpha N_0   ) \\
& \leq 
\sum_{t=1}^{   t^*    } 
\sum_{r=R_0}^{D^*} 
\mathbb{P} (   R_t =r,  N_t > \alpha  N_0   ) 
 \leq t^*  (D^*)^2 e^{  -  c \frac{\alpha N_0 }{ (D^*)^2  }  } \\
 &  \leq  N_0 \,  (\log(N_0+1))^{8} 
 e^{  -  c  \, \, \alpha^{ \frac{d-2}{d} } \log N_0 }     
 \end{split}
\end{equation}
where for the third inequality we used Proposition \ref{prop:largedeviationcontrol},
for the last inequality we used the assumption that
$(R_0 )^2 \leq  \frac{N_0}{  \alpha^{\frac{2}{d} }\log N_0 }$,
which implies $(D^*)^2 \leq   \frac{ 2 N_0}{ \,  \alpha^{\frac{2}{d}} \,  \log N_0}$ for any $\alpha >1$ and any sufficiently large $N_0$.
Plugging  (\ref{eq:step1ub}) and (\ref{eq:step2}) into (\ref{eq:initialstep}) we then obtain that,
for any $\alpha$  and $N_0$ sufficiently large, 
 $$
\mathbb{P}\Big ( \mathscr{S}(\eta_0) \not\subseteq B_{ R_0 +  c  \,  \alpha^{  \frac{2}{d} } {N_0}^{\frac{1}{d}} (\log N_0)^{ 1 +  \frac{7}{2} \beta }   }      \Big )  \leq 
\frac{c}{{N_0}^{ \alpha^{\frac{1}{d} }}} , 
$$
which concludes  the proof. 
 \end{proof}
 
 \subsection{Proof of Theorem \ref{th:deviation}}
 \label{sec:finalsection}
In this final section we prove Theorem \ref{th:deviation}, of which the shape theorem in Theorem \ref{th:main} is a direct corollary. 
 \begin{proof}
 Suppose that $d \geq 3$, 
 and let  $\epsilon \in (0, \frac{1}{2}), \delta >1$ be arbitrary. 
We let $\eta = |B_n| \1_0$ denote  the initial particle configuration with $|B_n|$ particles at the origin, for  $n \in \mathbb{Z}_+$. 
By Proposition \ref{prop:lowerbound},   for any sufficiently large $n$
   \begin{equation}\label{eq:firstestimate}
\mathbb{P}  \big ( B_{  n - n^{1/2+ \epsilon} } \not\subseteq    \mathscr{S}(\eta )  \big )  \leq n^d \exp \Big(  - c  \frac{n^{2 \epsilon}}{\log n} \Big) \leq \frac{1}{n^{\delta}} . 
   \end{equation}
   Moreover,  by conditioning on the number of particles reaching the outer boundary of $B_n$ when starting BIDLA from $\eta $, we   
    have  that 
   \begin{multline}
   \label{eq:secondestimate}
         \mathbb{P}  \Big (   \mathscr{S}(\eta ) \not\subset B_{ n + n^{1 - \frac{1}{2d} + \epsilon}  } \Big ) \\ \leq 
         \mathbb{P}  \Big (  \mathscr{S}(\eta ) \ \not\subset B_{n + n^{1 -\frac{1}{2d} + \epsilon}  } \,  \Big | \,  \cS^{\eta }_{B_n}(\partial B_n) \leq  n^{d-\frac{1}{2}+ \frac{ d \epsilon}{2}}  \Big ) +     \mathbb{P}  \Big ( \cS^{\eta }_{B_n}(\partial B_n)>  n^{d-\frac{1}{2}+ \frac{ d \epsilon}{2}} \Big ) \\
          \leq  \frac{1}{n^{\delta}} 
          + n^d \exp \Big(  - c \cdot  \frac{n^{2  d \epsilon}}{\log n} \Big) 
          \leq \frac{2}{n^{\delta}}
     \end{multline}
for any  $n$ large enough. 
In the last inequality  we used the  Abelian property and applied
Theorem \ref{thm:controlling} with $R=n$ and 
$N= |  \cS^{\eta }_{B_n}(\partial B_n)  | \leq   n^{d- \frac{1}{2} + \frac{d \epsilon}{2}}$
in order to upper bound the first term, 
while for the second term we used Proposition \ref{prop-standby}.
The bounds of Theorem \ref{th:main} when $d \geq 3$ and of Theorem \ref{th:deviation} then follow from  (\ref{eq:firstestimate}), (\ref{eq:secondestimate}) and an application of the  Borel-Cantelli Lemma.
 \end{proof} 
 
 \appendix 
 
\section{The Abelian property} \label{app:Abelian}
One of the key properties of a large family of related models is the so-called Abelian property, which allows particles to be moved in any order -- provided certain rules are respected -- without altering the statistics of the final configuration.
In this section we establish this property for Branching IDLA, following the framework of Rolla and Sidoravicius \cite{rolla2012absorbing},  and adapting their definitions and arguments to our context.

\paragraph{Particle configurations.}
Given any particle configuration $\eta: \Z^d \to \mathbb N$,
we say that  the vertex $x\in \mathbb Z^d$ is \textit{stable} in $\eta$ if $\eta(x)\le 1$, and it is  \textit{unstable} otherwise. 
The particle configuration $\eta$ is stable if every vertex $x \in \mathbb{Z}^d$ is stable in $\eta$.
We interpret $\eta(x)=0$ as the site $x$ being empty, and $\eta(x)=1$ as the particle at $x$ being settled.

\paragraph{Instructions.} 
To state the Abelian property for Branching IDLA, we first introduce a collection of instructions, that is, operators acting on a configuration of particles and producing a new configuration.
When a particle performs a Branching Random Walk step, it dies and creates offspring at its neighbouring sites.  
Accordingly, we introduce birth-death instructions.
In Branching IDLA, a particle performs a BRW step only if it has not yet settled, that is, if the site currently hosts at least two particles.
In this case, we refer to the instruction as \emph{legal}.
For the purpose of deriving a Least Action Principle and comparing Branching IDLA to the RBG process, however, it will be convenient to also allow settled particles to move.  For that reason, we allow to use instructions at sites containing at least one particle,  in which case the instruction is referred to as \emph{acceptable}.

A \textit{birth-death instruction},  $\mathfrak A^x_\xi$, is defined through a vector $\xi \in \bigcup_{n\ge 0} \mathcal{N}^n$,
where $\mathcal{N}$ is the set of sites which are neighbours of the origin. 
If $\xi$ has $k$-coordinates,
each entry is thought of as the step that one of the $k$ children of an individual at $x$ makes. More precisely,  for any 
$\xi=(\xi(1),\dots,\xi(k)) \in \bigcup_{n\ge 0} \mathcal{N}^n$  with $k $ coordinates we  have  
\[
\text{if}\quad \eta(x)\ge 1,\quad\mathfrak A^x_\xi\text{ is $\eta$-acceptable and}\quad 
\mathfrak A^x_\xi(\eta) : =  \eta-\1_x +\1_{k>0}\cdot \sum_{i=1}^k \1_{x+\xi(i)}.
\]
In words, one individual at $x$ produces $k\ge 0$ children and dies, and the child numbered $i\le k$
is born at $x+\xi(i)$. 
Let us denote by $\mathcal{I}_x$ the set of all possible birth-death instructions at $x$,
namely $$\mathcal{I}_x = \Big\{  \mathfrak A^x_\xi  \, : \, \xi \in \bigcup_{n \geq 0} \,   \mathcal{N}^n   \Big\}.$$
A birth-death instruction at $x$,   $\mathfrak A^x \in \mathcal{I}_x$,
 is $\eta$-\textit{legal} 
if $\eta(x)>1$, $\eta$-\textit{acceptable} if $\eta (x) \ge 1$  and it is $\eta$-forbidden if $\eta(x)=0$.

The two key properties which make this representation Abelian are that, for any 
$\eta$-acceptable  birth-death instruction at $x$,   $\mathfrak A^x \in \mathcal{I}_x$,  and any  $n \in \mathbb N$, 
\begin{equation}\label{harris-1}
\mathfrak A^x\Big(\eta+n\1_x\Big)=\mathfrak A^x(\eta)+n\1_x,
\end{equation}
and that, for any $y\not= x$, and an $\eta$-acceptable birth-death instruction at $y$, $\mathfrak A^y \in \mathcal{I}_y$,  we have
\begin{equation}\label{harris-2}
\mathfrak A^y(\eta)(x)\ge \eta(x).
\end{equation}
These two properties have the following implication.
Assume that $\eta(x)\ge 1$ (and, by \eqref{harris-2}, this implies that $\mathfrak A^x$ is $\mathfrak A^y(\eta)$-acceptable), 
and that $\mathfrak A^y$ is $\eta$-acceptable. Then $\mathfrak A^y$ is $\mathfrak A^x(\eta)$-acceptable by \eqref{harris-2}, 
and
\begin{equation}\label{harris-key}
\mathfrak A^x\circ \mathfrak A^y(\eta)=\mathfrak A^y\circ \mathfrak A^x(\eta).
\end{equation}

\paragraph{Stack of instructions and sequence of topplings.}
In this framework, the instructions are assigned to sites rather than to particles. 
For this reason, we introduce \textit{stacks of instructions}, that is, sequences of birth--death instructions associated with each site. 
More formally, a stack of instructions is a vector 
\[
\tau = (\tau_{x,j})_{x \in \mathbb{Z}^d,\, j \in \mathbb{Z}_+},
\]
where each element $\tau_{x,j}$ belongs to $\mathcal{I}_x$, the set of instructions associated with the site~$x$.
In order to sample a Branching IDLA process using stacks of instructions,  
the elements $ (\tau_{x,j})_{x \in \mathbb{Z}^d,\, j \in \mathbb{Z}_+}$
are drawn at random as i.i.d.\  random variables 
  such that, for each birth-death instruction
 $ \mathfrak A^x_\xi \in \mathcal{I}_x$ producing $k \in \mathbb{N}$ children, i.e.\  $\xi \in \mathcal{N}^k$, 
	\[ 
	\P ( \tau_{x,j} =  \mathfrak A^x_\xi  ) = \frac{1}{(2d)^k} \nu(k),
	\]
where we recall that  $\nu $ denotes the offspring distribution.
Given a stack of instructions $\tau$, we now define {sequences of topplings}. 
Starting from an arbitrary initial particle configuration $\eta : \Z^d \to  \mathbb{N}$, we apply the instructions according to an ordered sequence of sites 
$
\alpha = (x_1, x_2, \ldots, x_n) .
$
To keep track of the number of instructions used at each site, we introduce an \textit{odometer field}
$
h : \mathbb{Z}^d \to  \mathbb{N},
$
which records, for every site, how many instructions have been used. 
A \textit{toppling} at a site $x$
is acceptable if $\eta(x) \geq 1$ 
and legal if $\eta(x) > 1$, and it is defined as 
$$
\Phi_x(\eta,h) = \bigl( \tau_{ x, h(x)+1  }  \eta ,\, h + \mathbf{1}_x \bigr).
$$
A \textit{sequence of topplings} along 
$\alpha$ is acceptable (resp. legal)  if,  for each $ 0 \leq k \leq n-1$,
the particle configuration of 
$\Phi_{x_k} \circ \dots \circ \Phi_{x_1}(\eta,h)$
has at least one particle (resp. two particles) at $x_{k+1}$.
If $\alpha$ is acceptable,  then the sequence of topplings  $\alpha$ is defined by the following operation
\[
\Phi_\alpha(\eta,h) = \Phi_{x_n} \circ \dots \circ \Phi_{x_1}(\eta,h).
\]
We omit writing the odometer field $h$ when it is not needed, and think of $\Phi_\alpha $ as acting on configurations.
For any  sequence $\alpha$,   the  {\it odometer} of $\alpha$,
$m_\alpha$,  is defined as $$m_\alpha=\sum_{x\in \alpha} \1_{x}.$$ 
Thus $m_\alpha$ counts the number of appearances of the site $x$ in the sequence $\alpha$. 
\begin{Lemma}\label{lem-Abelian-1}
If $\alpha,\beta$ are $\eta$-acceptable finite sequences and $m_\alpha=m_\beta$,  then for any odometer field $h$,
$\Phi_\alpha(\eta,h)=\Phi_\beta(\eta,h)$.
\end{Lemma}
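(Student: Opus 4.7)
The plan is to prove the lemma by induction on the common length $n := |\alpha| = |\beta|$ (these are equal since both coincide with $\sum_{x}m_\alpha(x)$). The base case $n=0$ is immediate. For the inductive step, write $\alpha = (x_1, \alpha')$; since $m_\beta(x_1) = m_\alpha(x_1) \geq 1$, the site $x_1$ must appear in $\beta$. Let $k+1$ be the position of its first occurrence, so $\beta = (y_1, \ldots, y_k, x_1, y_{k+2}, \ldots, y_n)$ with $y_i \neq x_1$ for $1 \leq i \leq k$. The strategy is to bubble this $x_1$-toppling to the front of $\beta$ via successive adjacent swaps, invoking the commutativity identity \eqref{harris-key}, and then apply the inductive hypothesis to the tails.

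The key auxiliary claim is that each adjacent swap is valid. To exchange $(\ldots, y_i, x_1, \ldots)$ for $(\ldots, x_1, y_i, \ldots)$ in $\beta$ at the intermediate configuration $\eta_{i-1}$ (obtained after performing the first $i-1$ topplings of $\beta$), one needs both $\tau_{y_i, h(y_i)+1}$ and $\tau_{x_1, h(x_1)+1}$ to be $\eta_{i-1}$-acceptable: then \eqref{harris-2} guarantees that each instruction remains acceptable after the other has been applied, and \eqref{harris-key} delivers the swap. The acceptability of $\tau_{y_i}$ is free from the hypothesis that $\beta$ is $\eta$-acceptable. For $\tau_{x_1}$ we use that none of the topplings $\tau_{y_1}, \ldots, \tau_{y_{i-1}}$ acts at $x_1$, so by iterating \eqref{harris-2} we get $\eta_{i-1}(x_1) \geq \eta(x_1) \geq 1$, the last inequality being precisely the $\eta$-acceptability of the first coordinate of $\alpha$. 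This is the one place where the hypothesis on $\alpha$ is used beyond the equality of odometers.

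Performing the $k$ swaps successively yields $\Phi_\beta(\eta,h) = \Phi_{\tilde\beta}(\eta,h)$ with $\tilde\beta := (x_1, y_1, \ldots, y_k, y_{k+2}, \ldots, y_n)$, and the $x_1$-instruction applied in the new front position is the same $\tau_{x_1, h(x_1)+1}$ that $\alpha$ uses first. Hence after one application of $\Phi_{x_1}$ one is reduced to comparing the sequences $\alpha'$ and $\beta'' := (y_1, \ldots, y_k, y_{k+2}, \ldots, y_n)$ starting from $\Phi_{x_1}(\eta,h)$. Both have length $n-1$, both have odometer $m_\alpha - \mathbf 1_{x_1} = m_\beta - \mathbf 1_{x_1}$, and both are acceptable at the updated configuration, since each arises as a tail of an acceptable sequence (for $\beta''$, the acceptability at $\Phi_{x_1}(\eta,h)$ is what we just proved by the swapping argument). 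The inductive hypothesis closes the argument.

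The only genuinely delicate point is the monotonicity observation ensuring $\eta_{i-1}(x_1) \geq 1$ throughout the prefix $y_1, \ldots, y_k$ of $\beta$, which is what allows the commutation \eqref{harris-key} to be applied repeatedly; everything else is bookkeeping of odometers and instructions from the stack $\tau$.
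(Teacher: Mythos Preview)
Your proof is correct and follows essentially the same approach as the paper: induct on the common length, bubble the first site of $\alpha$ to the front of $\beta$ by successive adjacent swaps justified via \eqref{harris-2} and \eqref{harris-key}, then apply the induction hypothesis to the tails. If anything, you are slightly more explicit than the paper about tracking which stack instruction is being used and about why the tail sequences remain acceptable at the updated configuration.
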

\begin{proof}  We write a proof by induction on the length of $\alpha$, that we denote with $|\alpha|$.
The property is obviously true for $|\alpha|=1$, since then $\alpha=\beta$. 
Assume that $|\alpha|=n > 1$ and that 
$\alpha=(x,x_2,\dots,x_n)$, whereas $x$ occurs in position $j+1>1$ for the first
time in $\beta=(y_1,\dots,y_{j},x,\dots)$, and $y_1,\dots,y_j\not= x$. 
Call now $\beta'$ the sequence obtained from $\beta$ by moving  the first occurrence of $x$ to the front, namely $\beta^\prime = (x, y_1, y_2, \ldots, y_{j}, \ldots)$.
We will show that $\beta^\prime$ is acceptable for $\eta$ and that $\Phi_\beta(\eta,h) = \Phi_{\beta^\prime}(\eta,h)$.
Since  the first site of 
$\alpha$ and that of $\beta'$  are the same, and since $m_{\alpha} = m_{\beta^\prime}$, 
we then deduce by induction that $\Phi_\alpha(\eta,h) = \Phi_{\beta^\prime}(\eta,h) = \Phi_\beta(\eta,h) = \Phi_{\beta^\prime}(\eta,h)$.

Since $\Phi_\alpha$ is $\eta$-acceptable we have that $\eta(x)\ge 1$.
Call $\beta_j:=(y_1,\dots,y_{j-1})$, and note that
$\eta(x)\ge 1$ implies that $\Phi_{\beta_j}(\eta)(x)\ge 1$, since,  by property \eqref{harris-2} mass cannot decrease at $x$ when
toppling sites other than $x$.
By applying \reff{harris-key} we deduce that,  if $\mathfrak A^{y_j}$ and $\mathfrak A^{x}$ are the instructions occurring
 in $\beta$ in the $j$ and $j+1$ positions respectively,  then
\begin{equation}\label{Abelian-2}
\mathfrak A^{y_j} \ \text{ is }\mathfrak A^{x}(\Phi_{\beta_j}(\eta))\text{-acceptable}
\quad\text{and}\quad
\mathfrak A^{x}\circ\mathfrak A^{y_j} (\Phi_{\beta_j}(\eta))=\mathfrak A^{y_j} \circ\mathfrak A^{x}(\Phi_{\beta_j}(\eta)).
\end{equation}
Now, by induction, we have that $\Phi_{\beta_{j+1}}$ is $\Phi_{x}(\eta)$-acceptable and
\begin{equation}\label{Abelian-3}
\Phi_{x}\circ \Phi_{\beta_{j+1}}(\eta)=\Phi_{\beta_{j+1}}\circ\Phi_{x}(\eta).
\end{equation}
This implies that $ \Phi_{\beta^\prime}(\eta,h) = \Phi_\beta(\eta,h)$ as desired, thus  concluding the proof. 
\end{proof} 

The second lemma we present is the key result toward the Least Action Principle.
We write 
$m_\beta\prec m_\alpha$
if $m_\beta(x) \leq m_{\alpha}(x)$ for all 
$x \in \mathbb{Z}^d$. 
\begin{Lemma}\label{lem-Abelian-2}
If $\alpha$ is an $\eta$-acceptable sequence of topplings,  $\Phi_\alpha(\eta)$ is stable in the finite set $K \subset \mathbb Z^d$, and $\beta$ is a finite $\eta$-legal sequence of topplings in $K$, then
$m_\beta\prec m_\alpha$.
\end{Lemma}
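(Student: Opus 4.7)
The plan is to prove the statement by induction on $|\beta|$, using an explicit formula for $\Phi_\gamma(\eta)$ in terms of the odometer $m_\gamma$. First I would establish, by a short induction on $|\gamma|$ directly from the definition of a single toppling, that for any $\eta$-acceptable sequence $\gamma$ and any site $z \in \mathbb{Z}^d$,
\[
\Phi_\gamma(\eta)(z) \;=\; \eta(z) \,-\, m_\gamma(z) \,+\, \sum_{y \in \mathbb{Z}^d} \sum_{j=1}^{m_\gamma(y)} a(y,j,z),
\]
where $a(y,j,z) \in \mathbb{N}$ denotes the (deterministic) number of particles sent from $y$ to $z$ by the $j$-th instruction $\tau_{y,j}$ in the stack at $y$. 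Each step of the induction uses that a single birth-death instruction only subtracts one at the toppled site and adds $a(y,j,\cdot)$ elsewhere. Observe that the right-hand side depends only on $\eta$, the stack, and the odometer $m_\gamma$, not on the order in which topplings are carried out; this is precisely the content of Lemma~\ref{lem-Abelian-1}.

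Then I would prove the lemma itself by induction on $|\beta|$. The base case $|\beta|=0$ is trivial since $m_\beta \equiv 0$. For the inductive step I would write $\beta = \beta' \cdot (y_k)$; then $\beta'$ is itself an $\eta$-legal sequence of topplings in $K$ of length $k-1$ (initial segments of legal sequences are legal), and the inductive hypothesis applied to $\beta'$ gives $m_{\beta'} \prec m_\alpha$. It then suffices to check that $m_{\beta'}(y_k) < m_\alpha(y_k)$. Assume for contradiction that $m_{\beta'}(y_k) = m_\alpha(y_k)$. Applying the explicit formula to both $\alpha$ and $\beta'$ (both are $\eta$-acceptable) and subtracting at $z = y_k$ gives
\[
\Phi_\alpha(\eta)(y_k) - \Phi_{\beta'}(\eta)(y_k) \;=\; -\bigl(m_\alpha(y_k) - m_{\beta'}(y_k)\bigr) \,+\, \sum_{y \in \mathbb{Z}^d} \sum_{j = m_{\beta'}(y)+1}^{m_\alpha(y)} a(y,j,y_k) \;\geq\; 0,
\]
where the diagonal term cancels by assumption and the summation range is well-defined precisely because $m_{\beta'} \prec m_\alpha$. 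Since $\beta$ is $\eta$-legal at step $k$, $\Phi_{\beta'}(\eta)(y_k) \geq 2$, hence $\Phi_\alpha(\eta)(y_k) \geq 2$, contradicting the stability of $\Phi_\alpha(\eta)$ on $K$ at the point $y_k \in K$.

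The main obstacle, and what one should resist, is trying to rearrange $\alpha$ so as to factor out an initial copy of $\beta'$ and then argue inductively on the ``remainder'' of $\alpha$: showing that such a rearrangement remains $\eta$-acceptable is genuinely delicate, because swapping two consecutive topplings as in \eqref{harris-key} requires acceptability of both at the intermediate configuration, which is not automatic even when $m_{\beta'} \prec m_\alpha$. The explicit-odometer representation bypasses this difficulty entirely by reducing the required comparison $\Phi_\alpha(\eta)(y_k) \geq \Phi_{\beta'}(\eta)(y_k)$ to the arithmetic inequality $m_{\beta'} \prec m_\alpha$ delivered by the inductive hypothesis. A minor but helpful simplification in deriving the formula is that increments $\xi$ live in the set $\mathcal{N}$ of neighbours of the origin and are therefore non-zero, so an instruction $\tau_{y,j}$ never returns a particle to $y$ itself, keeping the bookkeeping of $a(y,j,z)$ fully transparent.
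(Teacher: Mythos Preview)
Your proof is correct and takes a genuinely different route from the paper's own argument. The paper peels off the \emph{first} element of $\beta$: writing $\beta = (y, y_2, \dots)$ with $\eta(y) > 1$, it observes that $\alpha$ must topple $y$ at some point (else $\Phi_\alpha(\eta)(y) \geq \eta(y) > 1$ by \eqref{harris-2}, contradicting stability), commutes that first occurrence of $y$ in $\alpha$ to the front using \eqref{harris-key} exactly as in the proof of Lemma~\ref{lem-Abelian-1}, and then applies induction to the tails $\beta_1 = (y_2, \dots)$ and $\alpha_1$ with respect to the new configuration $\eta_1 = \Phi_y(\eta)$. You instead peel off the \emph{last} element of $\beta$ and compare configurations directly via your odometer identity, bypassing commutation entirely. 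Your approach has the merit of making Lemma~\ref{lem-Abelian-1} logically unnecessary for the Least Action Principle (the explicit formula already encodes order-independence), and the identity itself is a useful standalone tool.

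One small remark: your caution that rearranging $\alpha$ is ``genuinely delicate'' is slightly too pessimistic in this particular setting. The paper's rearrangement moves only a \emph{single} toppling --- the first occurrence of $y$ in $\alpha$ --- to the front, and acceptability of the rearranged sequence is guaranteed precisely by \eqref{harris-2} (toppling at sites other than $y$ can only increase mass at $y$) together with \eqref{harris-key}. The delicacy you anticipate would indeed arise if one tried to factor out a long prefix of $\beta$ from $\alpha$ in one go, but the paper avoids this by recursing after each single swap.
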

\begin{proof} 
The proof is obtained by induction on $|\beta|$. The claim is obvious if $|\beta|=1$, so assume $|\eta | >1$.  Let $y\in K$ be
such that $\beta:=(y,y_2,\dots)$.
By assumption, it is necessarily the case that  $\eta(y)>1$. 
Since $\alpha$ stabilizes $\eta$, there must be a toppling which acts on $y$: 
suppose that $y$ occurs for the first time  in position $j+1$ in $\alpha$.
As in the previous proof, in $\Phi_\alpha$ we can commute the instruction $\mathfrak A^y$ with
the first $j$ instructions.  Call $\alpha_1$ the sequence of topplings obtained by deleting the first site $y$ occurring in the sequence $\alpha$, $\beta_1=(y_2,\dots)$ and $(\eta_1,h_1)=\Phi_y(\eta,h)$. 
Now, $|\beta_1|=|\beta|-1$, the configuration $\Phi_{\alpha_1}(\eta_1,h_1)$ is stable in $K$, and $\beta_1$ is a $\eta_1$-legal sequence.
Thus by induction we deduce that $m_{\beta_1}\prec m_{\alpha_1}$,  and since 
$m_{\beta}=m_{\beta_1}+\1_{y} $ and $ m_{\alpha}=m_{\alpha_1}+\1_{y}$, 
we conclude that $ m_\beta\prec m_\alpha $. 
\end{proof} 

We can now state the Least Action Principle, which is a direct corollary of Lemma~\ref{lem-Abelian-2}.
We say that the   sequence $\alpha$, which is acceptable for $\eta$, stabilizes $\eta$ in $K$
if the particle configuration $\Phi_\alpha(\eta)$ is stable in $K$.  
\begin{Proposition}\label{prop-Abelian} If $\alpha$ and $\beta$ are both $\eta$-legal finite sequences which stabilize $\eta$ in $K$, then $m_\alpha=m_\beta$, and consequently $\Phi_\alpha(\eta)=\Phi_\beta(\eta)$.
Furthermore, for any finite $K\subset \mathbb Z^d$, any $K$-stabilizing $\eta$-acceptable finite sequence $\alpha$ has an odometer $m_\alpha$
above $m_\beta$,  where $\beta$ is any $\eta$-legal sequence in $K$ which stabilizes $\eta$ in $K$.
\end{Proposition}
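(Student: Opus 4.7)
The plan is to derive Proposition \ref{prop-Abelian} as a direct corollary of Lemma \ref{lem-Abelian-2} (together with Lemma \ref{lem-Abelian-1} for the final identification of configurations). The real work has already been done in those two lemmas, so what remains is essentially a symmetry argument.

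For the first claim, suppose $\alpha$ and $\beta$ are both $\eta$-legal finite sequences that stabilize $\eta$ in $K$. Any $\eta$-legal sequence is in particular $\eta$-acceptable, so the hypotheses of Lemma \ref{lem-Abelian-2} are met with $\alpha$ playing the role of the acceptable stabilizing sequence and $\beta$ playing the role of the legal sequence in $K$; this yields $m_\beta \prec m_\alpha$. Swapping the roles of $\alpha$ and $\beta$ (which is allowed precisely because both are simultaneously legal and stabilizing), the lemma gives $m_\alpha \prec m_\beta$. Combining the two inclusions vertex by vertex forces $m_\alpha = m_\beta$. Once the odometers coincide, Lemma \ref{lem-Abelian-1} applies and yields $\Phi_\alpha(\eta) = \Phi_\beta(\eta)$, as desired.

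For the second claim, let $\alpha$ be any finite $\eta$-acceptable sequence that stabilizes $\eta$ in $K$ and let $\beta$ be any finite $\eta$-legal sequence in $K$ that stabilizes $\eta$ in $K$. The stabilization of $\beta$ is not even needed: Lemma \ref{lem-Abelian-2} applies verbatim, giving $m_\beta \prec m_\alpha$. This is exactly the assertion that the odometer of any acceptable stabilization dominates that of any legal sequence inside $K$.

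I do not anticipate a real obstacle here: the whole content of the proposition is packaged into Lemma \ref{lem-Abelian-2}, which in turn rests on the commutation identity \eqref{harris-key}. The only minor point to flag is the logical dependency: one must note that \emph{legal} implies \emph{acceptable}, so that $\alpha$ and $\beta$ can be interchanged in the roles required by Lemma \ref{lem-Abelian-2} in part 1. After that, the argument reduces to a two-line symmetry plus one invocation of Lemma \ref{lem-Abelian-1}.
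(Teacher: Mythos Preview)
Your proof is correct and follows exactly the approach the paper intends: the paper itself states that the proposition is a ``direct corollary of Lemma~\ref{lem-Abelian-2}'' and gives no further argument, so your symmetry-plus-Lemma~\ref{lem-Abelian-1} derivation is precisely the missing two lines.
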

In summary,  any successful $\eta$-legal finite sequence of topplings  produces the same stable configuration, and its odometer is below the odometer of any stabilizing $\eta$-acceptable sequence of topplings.

Lemma~\ref{lem-Abelian-2} leads  to  the following useful corollary,
which is applied in Section~\ref{sect:outerboundgrowth} to compare
Branching IDLA with the Random Barrier Growth.
Indeed,   Branching IDLA  can be sampled by toppling any stabilizing
\emph{legal} sequence, whereas  RBG can be sampled by
toppling an \emph{acceptable} stabilizing sequence.
If the odometer of the RBG is known to vanish outside a large ball
(that is, no particle leaves that ball in the RBG),
which is precisely what we prove, then it follows that the odometer of
Branching IDLA also vanishes outside the same ball
(that is, no particle leaves the same ball in Branching IDLA).
\begin{Corollary}\label{cor-free}
Let $K$ be a finite subset of $\mathbb{Z}^d$,
and let $\eta$ be an arbitrary particle configuration with support contained in~$K$.
Suppose that $\alpha$ is $\eta$-legal  and stabilizes $\eta$ in~$K$,
and that $\beta$ is $\eta$-acceptable and stabilizes $\eta$ in~$K$,
with $m_\beta(x) = 0$ for every $x \in K^c$.
Then $m_\alpha(x) = 0$ for every $x \in K^c$.
\end{Corollary}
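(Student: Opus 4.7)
The strategy is to produce a third sequence $\gamma$ that is simultaneously $\eta$-legal, supported in $K$, and stabilizes $\eta$ in $K$. Once such a $\gamma$ is available, the first part of Proposition~\ref{prop-Abelian} forces $m_\alpha = m_\gamma$, because both $\alpha$ and $\gamma$ are $\eta$-legal finite sequences that stabilize $\eta$ in $K$; the support property of $m_\gamma$ then transfers directly to $m_\alpha$, which is exactly the claimed conclusion.

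To construct $\gamma$, I would proceed greedily inside $K$: starting from $\eta$, while some site $x \in K$ is unstable in the current configuration, perform a legal toppling at $x$ (the specific rule for choosing $x$, e.g.\ lexicographic, is irrelevant). By construction every step is an $\eta$-legal toppling located in $K$, so the resulting (possibly partial) sequence is $\eta$-legal and supported in $K$; and if the procedure halts, it halts at a configuration with at most one particle at every site of $K$, hence at a configuration that is stable in $K$.

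The only real step is to show that this greedy construction terminates after finitely many topplings. For this I would invoke Lemma~\ref{lem-Abelian-2} with the given $\eta$-acceptable sequence $\beta$ playing the role of the acceptable stabilizing sequence in the lemma (which is legitimate since $\Phi_\beta(\eta)$ is stable in $K$), and with any finite prefix $\gamma_t$ of the greedy construction playing the role of the legal-in-$K$ sequence. The lemma then gives $m_{\gamma_t} \prec m_\beta$ uniformly in $t$; since $m_\beta$ has finite total mass and is supported on the finite set $K$, the total length $|\gamma_t| = \sum_x m_{\gamma_t}(x)$ is bounded by $|\beta| < \infty$. Hence the greedy procedure cannot run forever, and must halt after at most $|\beta|$ steps, producing the desired $\gamma$.

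The hard part, to my mind, is precisely this passage from the available acceptable stabilization $\beta$ to a legal one $\gamma$ supported on the same set $K$, and the only way I see to achieve it is via the a priori bound on legal odometers supplied by Lemma~\ref{lem-Abelian-2}. Once $\gamma$ is in hand, the uniqueness of the legal odometer from Proposition~\ref{prop-Abelian} finishes the argument immediately.
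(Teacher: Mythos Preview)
Your detour through the auxiliary sequence $\gamma$ is unnecessary: the paper obtains the corollary directly from Lemma~\ref{lem-Abelian-2} in one step by swapping the roles. Pick any finite $K'$ containing both $K$ and the (finite) support of $m_\alpha$; then $\alpha$ is an $\eta$-legal sequence of topplings in $K'$, and $\beta$ is $\eta$-acceptable with $\Phi_\beta(\eta)$ stable in $K'$. Lemma~\ref{lem-Abelian-2} (with its ``$\alpha$'' taken to be your $\beta$, its ``$\beta$'' to be your $\alpha$, and its $K$ to be $K'$) yields $m_\alpha \prec m_\beta$ immediately, hence $m_\alpha$ vanishes on $K^c$. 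No intermediate $\gamma$ is needed.

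More importantly, your final step has a genuine gap. Your appeal to Proposition~\ref{prop-Abelian} to conclude $m_\alpha = m_\gamma$ is not justified: the equality of odometers there, as actually derivable from Lemma~\ref{lem-Abelian-2}, requires both legal sequences to lie \emph{in $K$}, and whether $\alpha$ lies in $K$ is precisely what you are trying to prove. Without that restriction the equality can fail. For instance take $d=1$, $K=\{0\}$, $\eta = 3\cdot\1_0$, with the first two instructions at $0$ each sending one child to $+1$, the third sending one child to $-1$, and the first instruction at $+1$ sending one child back to $0$. Then $\gamma = (0,0)$ is legal in $K$ and stabilizes $\eta$ in $K$ with $m_\gamma = 2\cdot\1_0$, while $\alpha = (0,0,1,0)$ is $\eta$-legal and stabilizes $\eta$ in $K$ (indeed globally), yet $m_\alpha(1)=1 \neq m_\gamma(1)$. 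Note that this same example shows the corollary as literally stated is slightly imprecise: for the direct argument above one needs $\Phi_\beta(\eta)$ stable in the larger set $K'$, not just in $K$. In the paper's application (Section~\ref{sec:controlling}) this is automatic, since on the relevant event the RBG stabilizes $\eta$ globally.
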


 
 \medskip

\bibliography{BIDLAbiblio}
\bibliographystyle{plain}

\end{document}